\theoremstyle{plain}
\newtheorem{theorem}{Theorem}[section]
\newtheorem{lemma}[theorem]{Lemma}
\newtheorem{proposition}[theorem]{Proposition}
\theoremstyle{definition}
\newtheorem{remark}[theorem]{Remark}
\numberwithin{equation}{section}
\newcommand\myeq{\mathrel{\stackrel{\makebox[0pt]{\tiny d}}{=}}}
\def\le{\leqslant}
\def\ge{\geqslant}
\newcommand{\eps}{\varepsilon}
\numberwithin{equation}{section}
\begin{document}

\title{ Numerical study of ergodicity for the overdamped Generalized Langevin Equation with fractional noise}

\author{Di Fang} \address{ Department of Mathematics, University of
  Wisconsin-Madison, Madison, WI 53706, USA} \email{di@math.wisc.edu}

  \author{Lei Li} \address{Institute of Natural Sciences and School of Mathematical Sciences, Shanghai Jiao Tong University, Shanghai, 200240, P. R. China }
\email{leili2010@sjtu.edu.cn}

\date{}

\thanks{ The work of Di Fang is supported in part by by National Science Foundation under the grant DMS-1522184, DMS-1619778 and DMS-1107291: RNMS KI-Net.}

\begin{abstract}
The Generalized Langevin Equation, in history, arises as a natural fix for the rather traditional Langevin equation when the random force is no longer memoryless. It has been proved that with fractional Gaussian noise (fGn) mostly considered by biologists, the overdamped Generalized Langevin equation satisfying fluctuation dissipation theorem can be written as a fractional stochastic differential equation (FSDE). While the ergodicity is clear for linear forces \cite{liliulu2017}, it remains less transparent for nonlinear forces. In this work, we present both a direct and a fast algorithm respectively to this FSDE model. The strong orders of convergence are proved for both schemes, where the role of the memory effects can be clearly observed. We verify the convergence theorems using linear forces, and then present the ergodicity study of the double well potentials in both 1D and 2D setups.
\end{abstract}
\maketitle

\section{Introduction}

Diffusion in statistical mechanics is a class of ubiquitous phenomena that appears commonly in nature and has been extensively studied from both physical and mathematical points of view. While the normal diffusion, typically formalized by random walk, is well understood through the rather classical Brownian Motion theory and the Langevin equation (LE), the so-called \textit{anomalous diffusion} processes, however, remain far from fully explored. Among them, subdiffusion, in which the mean square displacement $\left< \Delta x(t)^2 \right>$ scales as $t^\beta$ with $0<\beta <1$, has been found in many different physical contexts such as cytoplasmic macromolecules in living cells \cite{sub_cyto1,sub_cyto2}, the movement of lipids and single-molecule on membranes \cite{sub_membrane1, sub_membrane2,sub_membrane3}, the solute transport in porous media \cite{sub_porous}, the translocation of polymer solutions \cite{sub_trans1,sub_trans2_also_full,sub_trans3}, and the conformational dynamics and fluctuations of protein molecules \cite{sub_protein1,sub_protein2}. To better describe subdiffusive phenomena, the Generalized Langevin Equation (GLE) with fractional Gaussian noise (fGn) is therefore introduced \cite{kouxie04} from a model point of view. This particular GLE can be written as
\begin{equation} \label{gle}
m \ddot x(t) = - \nabla V(x) - \int_{t_0}^t \gamma(t-s) \dot x(s) ds + \eta(t),
\end{equation}
where a particle with mass $m$ and position $x(t)$ is considered. Besides the external force $- \nabla V(x) = b(x)$, the particle is also driven by the friction (dissipation) and a random force (fluctuation), in which the friction term depending on the history of the particle velocity -- instead of the instantaneous velocity -- has a memory effect with the memory kernel $\gamma(t)$, and the random fluctuation force $\eta(t) = \sigma \dot{B}_H$ is the fractional Gaussian noise with the Hurst index $H$ and some fluctuation constant $\sigma$. Here, $\dot{B}_H$ is understood as the distributional time derivative of the fractional Brownian motion $B_H$. See section \ref{sec_fbm} for the brief introduction and we refer the readers to \cite{shevchenko2015,dhp00} for more details. This equation is sometimes also referred to as the fractional Langevin equation in physical literature \cite{fle1,fle2,fle3,fle4}.

The intuitive picture of the GLE (or LE) description, as is provided by Kubo \cite{kubo66}, is to consider for example the colloidal particles floating in a liquid medium. The random impacts of surrounding particles are responsible for two effects -- the random force and the systematic friction, and hence the two parts must be related. To be specific, the energy restored by fluctuation must be balanced with the energy loss by dissipation so that the particle achieves the equilibrium with the correct temperature. This internal relationship linking both parts of the microscopic forces is described in general as the fluctuation-dissipation theorem (FDT) \cite{cw51,kubo66,mprv08}. To be specific,  $\eta(t)$ and the kernel $\gamma(t)$ satisfy
\begin{gather}\label{eq:fdt}
\mathbb{E}(\eta(t)\eta(t+\tau))=kT\gamma(|\tau|),~\forall \tau\in\mathbb{R},
\end{gather}
where $\mathbb{E}$ means `ensemble average' in physics or expectation in mathematics.

We point out that the GLE is, of course, by no means introduced merely for the sake of subdiffusion. Proposed by Mori \cite{mori65} and Kubo \cite{kubo66} in the sixties, the GLE is a well-appreciated object enjoying a long history. From a model viewpoint, GLE appears naturally according to the fluctuation-dissipation theorem as a generalization of the Langevin equation when the random force in considerations is no longer memoryless; In terms of derivation, it can be derived from the Mori-Zwanzig formulation \cite{mori1965transport, zwanzig73} as a powerful tool for dimension reduction in many different forms depending on applications, such as molecular dynamics \cite{coarsegrain1_eric,coarsegrain2_karniadakis,coarsegrain3_karniadakis,coarsegrain4_dashixiong} and recently uncertainty quantification \cite{uq1}. For different random forces, their corresponding memory kernels are different as a consequence of the fluctuation dissipation theorem \eqref{eq:fdt}. In the subdiffusion model with fractional Gaussian noise, the reasonable memory kernel turns out to behave as a power law
$\gamma(t) \propto t^{-\alpha}$ with certain constant $\alpha$ and the friction term becomes the fractional derivative, as is shown both formally and rigorously that in the contexts of the absence of external force and with quadratic potential in the \textit{overdamped} regime ($m \ll 1$) \cite{liliulu2017,kouxie04,kubo66,felderhof78}, respectively. It worths pointing out that due to the complicated memory effects, a rigorous proof of the fluctuation dissipation theorem is by no means an easy task, if not impossible. Fortunately in above cases, the solution can be formally written down explicitly, which makes the proofs feasible. For general potentials, however, a validation analysis of the fluctuation dissipation theorem in Equation \eqref{gle} remains unclear, which is also proposed as an open problem in \cite{liliulu2017}. Therefore, trying to understand this problem numerically serves as one main motivation of this work.

Following the analytical work, we also restrict ourselves to the overdamped GLE ($m \rightarrow 0$) as in \cite{liliulu2017}. The corresponding fractional stochastic differential equation (FSDE) reads \cite{liliulu2017}
\begin{equation} \label{fle}
D_c^{\alpha}x = - \nabla V(x) + \sigma \dot{B}_H,
\end{equation}
which is also known as the overdamped fractional Langevin equation with fractional noise, where
\[
\alpha=2-2H,~~\sigma=\frac{\sqrt{2}}{\sqrt{\Gamma(2H+1)}},
\]
as a result of FDT \eqref{eq:fdt} (see section \ref{sec_fbm} for more details). The fractional derivative in the Caputo sense \cite{gm97,kst06} is given by
\begin{equation} \label{caputo}
D_c^{\alpha}x(t) = \frac{1}{\Gamma (1-\alpha)} \int_0^t \frac{\dot x (s)}{(t-s)^\alpha} ds.
\end{equation}

Though motivated by the understanding of overdamped GLE in this FSDE model, we point out that the numerical analysis of FSDE is by itself an interesting mathematical problem. Although there has been some numerical simulations of the FLE \cite{fle3,fle_num}, the numerical analysis remain untouched. The main difficulties of our problem are of two folds: (a) In terms of numerical analysis, different from the usual SDE where the correlation between increments of the standard Wiener process is simply absent, in FSDE the increments of fBm depends on the history, resulting in the analysis of strong convergence much harder comparing to usual case. (b) From a computational viewpoint, due to the memory effect, a straightforward discretization will be very memory-consuming, as one needs access to all history values at each time step. This becomes particularly troublesome when computing a number of sample paths. Providing a rigorous strong convergence analysis, our paper also features a fast algorithm that can be used for general potentials with good efficiency. The rest of the paper is organized as follows. In this work, we first propose a direct numerical scheme for the FSDE with general parameters
\begin{gather}\label{eq:parameters}
\alpha\in (0,1),~H\in \left(\frac{1}{2}, 1\right),~~\sigma>0,
\end{gather}
based on the integral formulation of the FSDE and prove its convergence result in the strong sense. The optimal strong order for the overdamped GLE in the case $b(\cdot)$ is linear has been obtained by careful estimates of correlation for increments of fBm. However, due to the memory effects, the computational cost is rather formidable. In particular, a speeding up of the solver becomes crucial in the considerations of the ergodicity, and moreover the stochastic nature of our problem where multiple sample paths need to be computed.
To tackle the task, a fast numerical algorithm is then introduced in Section \ref{sec:fast}. The idea is to make use of the sum-of-exponentials (SOE) approximation of the algebraic memory kernel, which can be understood intuitively here as Markovian approximations of the non-Markovian process. The convergence of the fast algorithm is proved by establishing a stability lemma (Lemma \ref{lmm:discretecp}) based on comparison principles. We point out that SOE approximation has been applied in various situations for improving the computational efficiency of convolution integrals, for instance, \cite{soe1,soe2,soe3,soe4,soe5,soe_jiwei1,soe_jiwei2,soe_jiwei3}. Finally, the algorithms are tested on a number of numerical examples, including the verification test of FDT for quadratic and double-well potentials of both 1D and 2D cases.

\section{FSDE and Fractional Brownian Motion}

\subsection{Preliminaries and notations}

In this paper, we will fix the probability space $(\Omega, \mathcal{F}, \mathbb{P})$.  $x_0$ is a random variable defined on this space, while $B_H$ is a fractional Brownian motion defined on this space (see section \ref{sec_fbm} for brief introduction). We will use the filtration $(\mathcal{G}_t)$ with
\[
\mathcal{G}_t=\cap_{s>t} \sigma\Big(B_H(\tau), 0\le\tau\le s, x_0\Big),~~\forall t\in [0, T).
\]
The notation $\mathbb{E}$ represents the expectation (integral) under probability measure $\mathbb{P}$.

The FSDE model \eqref{fle} with general parameters given in \eqref{eq:parameters} is rigorously defined through the following integral formulation
\begin{gather}\label{eq:fsde1}
x(t)=x_0+\frac{1}{\Gamma(\alpha)}\int_0^{t}(t-s)^{\alpha-1}b(x(s))\,ds+\frac{\sigma}{\Gamma(\alpha)}\int_0^t (t-s)^{\alpha-1}dB_H,
\end{gather}
where $b(x)=-\nabla V$. In the discussion below, we will consider a general force field $b(x)$ that is not necessarily conservative.
As is shown in \cite[Theorem 1]{liliulu2017}, if $b$ is Lipschitz, the FSDE has a unique strong solution $x(t)$, which is a stochastic process adapted to the filtration $\{\mathscr{G}_t\}$.

For the convenience of discussion, we introduce the norm of a random variable $v\in L^2(\Omega; \mathbb{P})$
\begin{gather}
\|v\|=\sqrt{\mathbb{E}|v|^2},
\end{gather}
together with the associated inner product
\begin{gather}
\langle u, v\rangle=\mathbb{E} uv.
\end{gather}
Occasionally, we will drop the measure $\mathbb{P}$ and use $L^2(\Omega)$ to mean the space of square integrable random variables.

We denote by $J_{\alpha}$ the fractional integral operator
\begin{gather}
J_{\alpha}f=\frac{1}{\Gamma(\alpha)}\int_0^t(t-s)^{\alpha-1}f(s)\,ds.
\end{gather}
Moreover, we note that the solution to the fractional ODE \cite{li2018generalized}
\begin{gather}
D_c^{\alpha}u=Lu, ~~u(0)=A,
\end{gather}
is given by
\begin{gather}
u(t)=AE_{\alpha}(L t^{\alpha}),
\end{gather}
where $E_{\alpha}(\cdot)$ is the Mittag-Leffler function defined by
\begin{gather}\label{eq:mlfunc}
E_{\alpha}(z)=\sum_{n=0}^{\infty}\frac{z^n}{\Gamma(1+n\alpha)}.
\end{gather}

In the following subsections, we shall briefly revisit the basics of fractional Brownian motion, and then prove some basic estimates for the FSDE that prepares us for the numerical analysis in later sections.

\subsection{Fractional Brownian Motion}  \label{sec_fbm}

The fractional Brownian motion $B_H$ (see \cite{mv68,nualart06} for more detailed discussions) with Hurst parameter $H\in(0,1)$ is a Gaussian process defined on some probability space $(\Omega, \mathcal{F}, P)$ such that $B_H(0)=0$, with mean zero and covariance
\begin{gather}\label{fb:cov}
\mathbb{E}(B_t^HB_s^H)=R_H(s, t)=\frac{1}{2}\left(s^{2H}+t^{2H}-|t-s|^{2H}\right).
\end{gather}
By definition, $B_H$ has stationary increments
which are normal distributions with
$\mathbb{E}((B_H(t)-B_H(s))^2)=(t-s)^{2H}$. By the Kolmogorov
continuity theorem, $B_H$ is H\"older continuous with order
$H-\epsilon$ for any $\epsilon\in (0, H)$. $B_H$ has finite
$1/H$-variation. Besides, it is self similar:
$B_H(t)\myeq a^{-H}B_H(at)$ where `$\myeq$' means they have the same distribution. It is non-Markovian except for $H=1/2$
when it is reduced to the Brownian motion (i.e., Wiener process).
With this definition and the fact that $(B_H(t+h)-B_H(t))/h$ converges in distribution (i.e. under the topology of the dual of $C_c^{\infty}(0,\infty)$) to $\dot{B}_H(t)$, we have
\begin{multline}
\mathbb{E}(\dot{B}_H(t)\dot{B}_H(\tau+t))=\lim_{h\to 0, h_1\to 0}\mathbb{E}\left(\frac{B_H(t+h_1)-B_H(t)}{h_1}\frac{B_H(t+\tau+h)-B_h(t+\tau)}{h}\right)\\
  =\lim_{h\to 0, h_1\to 0}
  \frac{1}{2hh_1}\bigl((\tau+h_1)^{2H}-(t+h-h_1)^{2H}-\tau^{2H}+(\tau-h)^{2H}\bigr)=H(2H-1) \tau^{2H-2}.
\end{multline}
This explains why the fractional noise leads to power law kernel by the \eqref{eq:fdt} and why we have the FSDE in the over-damped limit as mentioned in the introduction.

Also, for the convenience, we denote
\begin{gather}
G(t)=\frac{\sigma}{\Gamma(\alpha)}\int_0^t (t-s)^{\alpha-1}dB_H.
\end{gather}
 The process $G(t)$ is clearly a Gaussian process because $B_H(t)$ is a Gaussian process. In \cite{liliulu2017}, it has been shown that
 \begin{lemma}[\cite{liliulu2017}]\label{eq:processG}
The increments of $G$ satisfies
\begin{gather}
\mathbb{E}|G(t_2)-G(t_1)|^2\le C|t_2-t_1|^{2H+2\alpha-2}.
\end{gather}
Consequently, $G(t)$ is $H+\alpha-1-\epsilon$ H\"older continuous for any $\epsilon>0$. Moreover, if $\alpha=2-2H$
and $\sigma=\frac{\sqrt{2}}{\sqrt{\Gamma(2H+1)}}$, then
\begin{gather}
G(t)\myeq \beta_H B_{1-H}
\end{gather}
 is a fractional Brownian motion up to a factor. Here, $\beta_H$ is a constant given by
 \begin{gather}
 \beta_H=\frac{\sqrt{2}}{\sqrt{\Gamma(3-2H)}}.
 \end{gather}
 \end{lemma}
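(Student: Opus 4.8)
\emph{Proof proposal.} The plan is to treat $G$ as a Wiener (deterministic–integrand) integral against the fractional Brownian motion $B_H$ and to reduce every assertion to the covariance of such integrals. Since $H>\tfrac12$, for deterministic kernels $f,g$ on $[0,\infty)$ one has the isometry–type identity
\[
\mathbb{E}\Big[\int_0^\infty f\,dB_H\ \int_0^\infty g\,dB_H\Big]=H(2H-1)\int_0^\infty\!\!\int_0^\infty f(u)g(v)\,|u-v|^{2H-2}\,du\,dv ,
\]
which follows from the covariance \eqref{fb:cov}, equivalently from the identity $\mathbb{E}(\dot B_H(u)\dot B_H(v))=H(2H-1)|u-v|^{2H-2}$ already recorded in the excerpt, by bilinearity and approximation of $f,g$ by step functions. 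In particular $G(t)=\frac{\sigma}{\Gamma(\alpha)}\int_0^\infty K_t\,dB_H$ with $K_t(s)=(t-s)_+^{\alpha-1}$ is a centered Gaussian process with $G(0)=0$, so all three claims are statements about its second-order structure.

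\textbf{Increment bound and H\"older regularity.} Writing $\phi:=K_{t_2}-K_{t_1}$, the identity above gives
\[
\mathbb{E}|G(t_2)-G(t_1)|^2=\frac{\sigma^2 H(2H-1)}{\Gamma(\alpha)^2}\int_0^\infty\!\!\int_0^\infty \phi(u)\phi(v)\,|u-v|^{2H-2}\,du\,dv .
\]
I would estimate this double integral by the embedding $\|\phi\|_{\mathcal H}^2\le c_H\|\phi\|_{L^{1/H}}^2$ valid for $H>\tfrac12$ (equivalently, by splitting the $(u,v)$–plane according to whether $s<t_1$ or $t_1\le s\le t_2$ and using that $(t_1-s)^{\alpha-1}-(t_2-s)^{\alpha-1}$ is small), and then the change of variables $s\mapsto t_1+(t_2-t_1)s'$ shows, by dimensional counting ($2(\alpha-1)+2+(2H-2)=2H+2\alpha-2$ from the two kernel factors, the two differentials and the singular factor), that the whole expression is $\le C|t_2-t_1|^{2H+2\alpha-2}$, the remaining dimensionless integral being finite; this uses $\alpha+H>1$, which holds in the FDT–consistent regime $\alpha=2-2H$. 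The H\"older statement is then immediate: $G$ being Gaussian, $\mathbb{E}|G(t_2)-G(t_1)|^{2k}\le C_k\big(\mathbb{E}|G(t_2)-G(t_1)|^2\big)^k\le C_k|t_2-t_1|^{k(2H+2\alpha-2)}$, and letting $k\to\infty$ in the Kolmogorov continuity theorem yields H\"older exponent arbitrarily close to $H+\alpha-1$.

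\textbf{Identification with $B_{1-H}$.} For $\alpha=2-2H$ and $\sigma=\sqrt2/\sqrt{\Gamma(2H+1)}$ it suffices, $G$ being centered Gaussian, to show $\mathbb{E}\big(G(t)G(s)\big)=\beta_H^2 R_{1-H}(s,t)$. The change of variables $u=t(1-x),\ v=s(1-y)$ in
\[
\mathbb{E}\big(G(t)G(s)\big)=\frac{\sigma^2 H(2H-1)}{\Gamma(\alpha)^2}\int_0^t\!\!\int_0^s (t-u)^{\alpha-1}(s-v)^{\alpha-1}|u-v|^{2H-2}\,dv\,du
\]
exhibits, upon specializing to $s=t$, the scaling $\mathbb{E}|G(t)|^2=\mathrm{const}\cdot t^{2(1-H)}$; more conceptually, the self-similarity $B_H(a\cdot)\myeq a^H B_H$ transfers through the defining integral to $G(a\cdot)\myeq a^{H+\alpha-1}G=a^{1-H}G$, so $G$ is $(1-H)$–self-similar. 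It then remains to evaluate the double integral at $\alpha=2-2H$: reducing it (by the substitution above together with writing $|u-v|^{2H-2}$ as a second derivative of $|u-v|^{2H}$, or directly via Euler's Beta integral $\int_0^1 y^{a-1}(1-y)^{b-1}dy=B(a,b)$ applied iteratively) to a closed form, and simplifying the resulting products of Gamma values with the Legendre and reflection identities, should produce exactly $\frac{\beta_H^2}{2}\big(t^{2(1-H)}+s^{2(1-H)}-|t-s|^{2(1-H)}\big)$ with $\beta_H^2=2/\Gamma(3-2H)$; the constant $\sigma$ was chosen precisely so that the prefactor matches.

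\textbf{Main difficulty.} The delicate point is the exact evaluation of the mixed Euler integral $\int_0^t\!\int_0^s (t-u)^{\alpha-1}(s-v)^{\alpha-1}|u-v|^{2H-2}\,dv\,du$ at $\alpha=2-2H$: the factor $|u-v|^{2H-2}$ couples the two variables and prevents a one-line Beta–function computation, so one must either integrate by parts to tame the singularity and then iterate the Beta integral, or recognize the integral as a Gauss hypergeometric value that collapses by Gauss's summation theorem. A secondary technical point, needed throughout, is the rigorous justification of the formal manipulation $dB_H=\dot B_H\,dt$, i.e.\ setting up the Wiener integral against $B_H$ in the $H>\tfrac12$ regime and checking that $K_t$ and $\phi$ lie in the relevant space, which is where the standing assumption $\alpha+H>1$ enters.
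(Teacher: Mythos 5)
First, a point of reference: the paper does not prove this lemma at all --- it is imported verbatim from \cite{liliulu2017} (``In \cite{liliulu2017}, it has been shown that\ldots''), so there is no internal proof to compare your argument against; your proposal has to stand on its own. Your framework (Wiener integral against $B_H$ for $H>\tfrac12$, the isometry with kernel $H(2H-1)|u-v|^{2H-2}$, reduction of everything to second-order structure of a centered Gaussian process) is the right one, and the first two claims are essentially complete. For the increment bound, the $L^{1/H}\hookrightarrow\mathcal H$ embedding does the job: splitting $\phi=K_{t_2}-K_{t_1}$ at $s=t_1$, the piece on $[t_1,t_2]$ gives $\delta^{2H+2\alpha-2}$ directly, and for the piece on $[0,t_1]$ you need the tail bound $\int_\delta^\infty(\delta u^{\alpha-2})^{1/H}du<\infty$, which holds because $\alpha<2-H$; this is worth writing out, since your ``dimensional counting'' alone does not show the constant is uniform in $t_1/\delta$. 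The H\"older statement via Gaussian moment equivalence and Kolmogorov is correct.

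The genuine gap is in the ``Moreover'' clause, which is the substantive content of the lemma, and there your proof stops at exactly the hard point: you assert that the mixed Euler integral $\int_0^t\int_0^s(t-u)^{1-2H}(s-v)^{1-2H}|u-v|^{2H-2}\,dv\,du$ ``should produce'' the fBm covariance with the constant $\beta_H^2=2/\Gamma(3-2H)$, but you neither carry out the evaluation nor supply a structural argument that avoids it. Note that the self-similarity you do establish ($G(a\cdot)\myeq a^{1-H}G$) only pins down $\mathbb{E}|G(t)|^2=ct^{2-2H}$; it does \emph{not} give the cross-covariance, because $G$ is defined by a kernel supported on $[0,t]$ rather than $(-\infty,t]$, so stationarity of increments is precisely the nontrivial claim. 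A cleaner reduction than matching the full covariance is: since $G$ is centered Gaussian with $G(0)=0$, polarization shows it suffices to prove the single identity $\mathbb{E}|G(t)-G(s)|^2=\beta_H^2|t-s|^{2-2H}$ for all $s<t$ --- i.e.\ that your upper bound in the first part becomes an exact, $t$-independent equality when $\alpha=2-2H$. That one-variable computation (a Beta/hypergeometric evaluation after the substitution you propose) is what actually has to be done, and until it is, the identification $G\myeq\beta_H B_{1-H}$ is unproved.
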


\subsection{Some Estimates of the FSDE}
In this section, we prove some basic estimates of the FSDE, which helps the understanding of the equation and prepares us for the numerical analysis in the later sections.

For the FSDE \eqref{eq:fsde1}, assume that
\begin{gather}\label{eq:blip}
|b(x)-b(y)|\le L|x-y|.
\end{gather}
In \cite{liliulu2017}, it has been shown that \eqref{eq:fsde1} has a unique continuous strong solution. Moreover, we have the following moment control:
\begin{lemma} \label{Ex^2}
If $\ \mathbb{E}(|x_0|^2+|b(x_0)|^2)<\infty$, then
\begin{gather}
\sup_{0\le t\le T}\|x(t)\|^2\le C(T).
\end{gather}
\end{lemma}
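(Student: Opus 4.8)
The plan is to work directly from the integral formulation \eqref{eq:fsde1}, take $L^2(\Omega)$-norms, and close a Gronwall-type inequality in the fractional setting. First I would write
\[
x(t) = x_0 + \frac{1}{\Gamma(\alpha)}\int_0^t (t-s)^{\alpha-1} b(x(s))\,ds + G(t),
\]
and use the elementary inequality $|a+b+c|^2 \le 3(|a|^2+|b|^2+|c|^2)$ to split $\|x(t)\|^2$ into three contributions. The term $\|x_0\|^2$ is finite by hypothesis. The Gaussian term $\|G(t)\|^2$ is bounded uniformly on $[0,T]$: indeed $G(t)-G(0)=G(t)$ and Lemma \ref{eq:processG} gives $\mathbb{E}|G(t)|^2 \le C t^{2H+2\alpha-2} \le C(T)$ since $2H+2\alpha-2>0$ under \eqref{eq:parameters} (or directly from the self-similarity $G\myeq \beta_H B_{1-H}$ in the FDT case).

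The main work is the drift term. Using the Lipschitz bound \eqref{eq:blip} in the form $|b(x(s))| \le |b(x_0)| + L|x(s)-x_0| \le |b(x_0)| + L|x_0| + L|x(s)|$, and then Minkowski's integral inequality for the $L^2(\Omega)$-norm,
\[
\left\| \frac{1}{\Gamma(\alpha)}\int_0^t (t-s)^{\alpha-1} b(x(s))\,ds \right\|
\le \frac{1}{\Gamma(\alpha)}\int_0^t (t-s)^{\alpha-1}\,\|b(x(s))\|\,ds,
\]
so that, squaring and using Cauchy--Schwarz (or the inequality $(\int_0^t (t-s)^{\alpha-1}\|b(x(s))\| ds)^2 \le (\int_0^t (t-s)^{\alpha-1} ds)(\int_0^t (t-s)^{\alpha-1}\|b(x(s))\|^2 ds)$ since $\alpha>0$ makes the kernel integrable), I obtain a bound of the form
\[
\|x(t)\|^2 \le C(T) + C \int_0^t (t-s)^{\alpha-1}\,\|x(s)\|^2\,ds,
\]
where the constant absorbs $\|x_0\|^2$, $\|b(x_0)\|^2$, $\sup_t\|G(t)\|^2$, $\Gamma(\alpha)$, $L$, and the factor $\int_0^T s^{\alpha-1}ds = T^\alpha/\alpha$. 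Setting $\phi(t) = \sup_{0\le r\le t}\|x(r)\|^2$ and noting the kernel is monotone, one gets $\phi(t) \le C(T) + C'\int_0^t (t-s)^{\alpha-1}\phi(s)\,ds$.

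At this point I would invoke the fractional (singular-kernel) Gronwall inequality — the standard version with kernel $(t-s)^{\alpha-1}$, whose conclusion is $\phi(t) \le C(T) E_\alpha(C' \Gamma(\alpha) t^\alpha)$ with $E_\alpha$ the Mittag-Leffler function defined in \eqref{eq:mlfunc}; since $E_\alpha$ is increasing and finite, this yields $\sup_{0\le t\le T}\|x(t)\|^2 \le C(T) E_\alpha(C'\Gamma(\alpha)T^\alpha) =: C(T) < \infty$, which is the claim. The one point requiring a little care is that the argument is not quite circular: $\phi(t)$ is a priori finite for each $t$ because $x$ is a continuous process and, more to the point, the estimate should first be derived for a truncated/stopped version of $x$ (or one notes that the strong solution from \cite[Theorem 1]{liliulu2017} is already known to have finite second moments locally), after which the Gronwall bound promotes the local control to a uniform one on $[0,T]$. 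The only genuine obstacle — mild here because $\alpha \in (0,1)$ keeps the kernel integrable — is handling the weakly singular convolution kernel correctly in the Gronwall step; everything else is a routine application of Minkowski and Cauchy--Schwarz.
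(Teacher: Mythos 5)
Your proposal is correct and follows essentially the same route as the paper: both start from the integral formulation \eqref{eq:fsde1}, split via $(a+b+c)^2\le 3(a^2+b^2+c^2)$, use the Lipschitz condition on $b$ together with Cauchy--Schwarz against the integrable kernel $(t-s)^{\alpha-1}$, bound $\mathbb{E}|G(t)|^2$ by a constant, and close with the singular-kernel (fractional) Gr\"onwall inequality. The only cosmetic difference is that the paper estimates $\|x(t)-x_0\|^2$ rather than $\|x(t)\|^2$, and it cites the Gr\"onwall lemma of \cite{feng2017} where you invoke the Mittag-Leffler form; your remark on the a priori finiteness needed to avoid circularity is a reasonable extra precaution that the paper leaves implicit via the existence theorem of \cite{liliulu2017}.
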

{}
\begin{proof}
Using \eqref{eq:blip}, the strong solution $x(t)$ satisfies
\[
\lvert x(t)-x_0 \rvert \le \frac{t^{\alpha}}{\Gamma(\alpha + 1)}|b(x_0)|+\frac{L}{\Gamma(\alpha)}\int_0^t(t-s)^{\alpha-1}|x-x_0|(s)\,ds
+|G(t)|.
\]
Taking square and using the elementary inequality
$(a+b+c)^2\le 3(a^2+b^2+c^2)$,
we have
\begin{gather}\label{eq:aux1}
\lvert x(t)-x_0\rvert^2\le 3 \left[ \frac{t^{2\alpha}}{\Gamma(\alpha + 1)^2} |b(x_0)|^2
+ \frac{L^2}{\Gamma(\alpha)^2} \left(\int_0^t(t-s)^{\alpha-1}
 |x-x_0|(s)\,ds\right)^2+\lvert G(t)\rvert^2 \right].
\end{gather}
For the second term, H\"older inequality yields
\begin{multline}\label{eq:aux2}
\left(\int_0^t(t-s)^{\alpha-1}
 |x-x_0|(s)\,ds\right)^2 \\
 \le \int_0^t(t-s)^{\alpha-1}\,ds\int_0^t(t-s)^{\alpha-1}|x-x_0|^2(s)\,ds
 =\frac{t^{\alpha}}{\alpha}\int_0^t(t-s)^{\alpha-1}|x-x_0|^2(s)\,ds.
\end{multline}
Further, by the result in \cite[Prop. 1]{liliulu2017}
\begin{gather}\label{eq:aux3}
\mathbb{E}|G(t)|^2\le C(T).
\end{gather}
Combining \eqref{eq:aux1}-\eqref{eq:aux3}, we have
\begin{gather}
\mathbb{E}|x(t)-x_0|^2\le
C_1(T)+C_2(T)\frac{1}{\Gamma(\alpha)}\int_0^t(t-s)^{\alpha-1}\mathbb{E}|x-x_0|^2(s)\,ds.
\end{gather}
Using the Gr\"onwall inequality in \cite[Prop. 5]{feng2017}, we have
\begin{gather}
\mathbb{E}|x(t)-x_0|^2\le C(T),\forall t\in [0, T].
\end{gather}
The claim therefore follows.
\end{proof}

Now we are ready to estimate the increments of the solution.
\begin{lemma}\label{lmm:processestimate}
There exists a constant $C(T)$ such that for all $\delta \in (0,1)$, we have
\begin{equation} \label{incre}
\|x(t+\delta)-x(t)\|^2\le C(T)\delta^{2H+2\alpha-2}
\end{equation}
for all $t\le T, t+\delta\le T$.
\end{lemma}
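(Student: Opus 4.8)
The plan is to write $x(t+\delta)-x(t)$ directly from the integral formulation \eqref{eq:fsde1} and estimate each of the three resulting pieces separately: the deterministic ``initial force'' term, the nonlinear drift term involving $b$, and the Gaussian noise term $G(t+\delta)-G(t)$. The last one is already handled for us by Lemma \ref{eq:processG}, which gives exactly $\|G(t+\delta)-G(t)\|^2\le C\,\delta^{2H+2\alpha-2}$, so this piece contributes the stated exponent and we only need to check that the other two pieces are no worse.

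First I would split the drift. Writing $J_\alpha[b(x)](t) = \frac{1}{\Gamma(\alpha)}\int_0^t (t-s)^{\alpha-1} b(x(s))\,ds$, the difference $J_\alpha[b(x)](t+\delta) - J_\alpha[b(x)](t)$ naturally separates into (i) an integral over $[t,t+\delta]$ with weight $(t+\delta-s)^{\alpha-1}$, and (ii) an integral over $[0,t]$ with weight $\big((t+\delta-s)^{\alpha-1}-(t-s)^{\alpha-1}\big)$. For (i), I would use the Lipschitz bound $|b(x(s))|\le |b(x_0)| + L|x(s)-x_0|$ together with the uniform moment bound from Lemma \ref{Ex^2}, so that $\|b(x(s))\|\le C(T)$ uniformly; then Minkowski's integral inequality (or Cauchy--Schwarz in $s$) bounds the $L^2(\Omega)$-norm of (i) by $C(T)\int_t^{t+\delta}(t+\delta-s)^{\alpha-1}\,ds = C(T)\,\delta^{\alpha}/\alpha$. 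For (ii), the kernel difference $(t-s)^{\alpha-1}-(t+\delta-s)^{\alpha-1}$ is nonnegative (since $\alpha-1<0$) and its integral over $s\in[0,t]$ telescopes to $\frac{1}{\alpha}\big(t^{\alpha}+\delta^{\alpha}-(t+\delta)^{\alpha}\big)\le \frac{1}{\alpha}\delta^{\alpha}$ by concavity of $r\mapsto r^\alpha$; combined again with $\|b(x(s))\|\le C(T)$ and Minkowski, this gives $\|(\mathrm{ii})\|\le C(T)\delta^{\alpha}$. The deterministic term $\frac{(t+\delta)^\alpha - t^\alpha}{\Gamma(\alpha+1)}|b(x_0)|$ is likewise $\le C\,\delta^\alpha \|b(x_0)\|$ by the same concavity estimate.

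Collecting the three contributions, $\|x(t+\delta)-x(t)\|^2 \le C(T)\big(\delta^{2\alpha} + \delta^{2H+2\alpha-2}\big)$. Since $H<1$ forces $2H+2\alpha-2 < 2\alpha$, and $\delta\in(0,1)$, the noise exponent $2H+2\alpha-2$ is the smaller one and hence dominates: $\delta^{2\alpha}\le \delta^{2H+2\alpha-2}$. This yields \eqref{incre}. The main subtlety — really the only place one must be careful — is the $L^2(\Omega)$ estimate of the drift differences: because $b(x(s))$ is a random function, one cannot pull the norm inside naively, so I would invoke Minkowski's integral inequality to move $\|\cdot\|$ past the $ds$-integral, after which the deterministic kernel estimates above apply verbatim. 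No Grönwall argument is needed here since the uniform moment bound of Lemma \ref{Ex^2} already decouples the drift estimate from $x(t+\delta)-x(t)$ itself.
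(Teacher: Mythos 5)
Your proposal is correct and follows essentially the same route as the paper: the identical three-term decomposition of $x(t+\delta)-x(t)$, the same telescoping kernel estimate $\frac{1}{\alpha}\bigl(t^{\alpha}+\delta^{\alpha}-(t+\delta)^{\alpha}\bigr)\le\frac{1}{\alpha}\delta^{\alpha}$, the uniform moment bound on $b(x(s))$ from Lemma \ref{Ex^2}, Lemma \ref{eq:processG} for the noise increment, and the observation that $\delta^{2\alpha}\le\delta^{2H+2\alpha-2}$ for $\delta<1$. The only cosmetic difference is your use of Minkowski's integral inequality where the paper applies the H\"older inequality with respect to the measure $(t-s)^{\alpha-1}\,ds$; both yield the same bound.
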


\begin{proof}
By \eqref{eq:fsde1}, we have
\begin{multline*}
x(t+\delta)-x(t)
=\frac{1}{\Gamma(\alpha)}\int_0^t\left[(t+\delta-s)^{\alpha-1}
-(t-s)^{\alpha-1}\right]b(x(s))\,ds \\
+\frac{1}{\Gamma(\alpha)}\int_t^{t+\delta}(t+\delta-s)^{\alpha-1}b(x(s))\,ds
+(G(t+\delta)-G(t))=:I_1+I_2+I_3.
\end{multline*}

Using again $(a+b+c)^2\le 3(a^2+b^2+c^2)$, we have
\[
\|x(t+\delta)-x(t)\|^2\le 3(\|I_1\|^2+\|I_2\|^2+\|I_3\|^2).
\]

Using the H\"older inequality,
\begin{multline*}
\mathbb{E}\left(\int_0^t[(t+\delta-s)^{\alpha-1}
-(t-s)^{\alpha-1}]b(x(s))\,ds\right)^2 \\
\le \int_0^t\left[(t-s)^{\alpha-1}-(t+\delta-s)^{\alpha-1}\right]\,ds\,\mathbb{E}
\int_0^t\left[(t-s)^{\alpha-1}-(t+\delta-s)^{\alpha-1}\right] |b(x(s))|^2\,ds
\end{multline*}
Since
\[
0\le \int_0^t(t-s)^{\alpha-1}-(t+\delta-s)^{\alpha-1}\,ds=\frac{1}{\alpha}(t^{\alpha}+\delta^{\alpha}-(t+\delta)^{\alpha})\le \frac{1}{\alpha}\delta^{\alpha},
\]
we have
\[
\|I_1\|^2 \le C(\alpha)\delta^{2\alpha}\sup_{0\le s\le T}\mathbb{E}|b(x(s))|^2.
\]
Similarly, one can apply H\"older inequality for $\mathbb{E}(\int_t^{t+\delta}(t+\delta-s)^{\alpha-1}b(x(s))\,ds)^2$. Finally, we have
\begin{gather} \label{incre_pf}
\lVert x(t+\delta)-x(t) \rVert^2
\le C\delta^{2\alpha}\sup_{0\le s\le T}\mathbb{E}\lvert b(x(s))\rvert^2+\mathbb{E} \lvert G(t+\delta)-G(t)\rvert^2
\le C(T) \left(\delta^{2\alpha}+\delta^{2H+2\alpha-2} \right),
\end{gather}
where we used Lemma \ref{eq:processG} and Lemma \ref{Ex^2}. Apparently $2 \alpha > 2H + 2 \alpha-2$, and the claim follows.
\end{proof}
\begin{remark}
When FDT is satisfied, $\alpha = 2 - 2H$, the order in the right hand side of Equation \eqref{incre_pf} becomes
\[
\|x(t+\delta)-x(t)\|^2\le C\delta^{2-2H}.
\]
If $H=\frac{1}{2}$ and $B_H=W$ is the standard Wiener process, this is a well-known result for diffusion processes.
\end{remark}

\section{Direct Discretization}

For the fixed terminal time $T$, we introduce the time step
\begin{gather}
k=\frac{T}{N},
\end{gather}
where $N$ is a positive integer and we define
\begin{gather}
t_j=jk.
\end{gather}
We will use the notation $\mathcal{C}$ to represent the complexity, or cost, of an algorithm.

We approximate $b(x(t))$ with a function $\tilde{b}(t)$ such that
\begin{gather}\label{eq:apprb}
\tilde{b}(t)=b(x_{j-1}),~t\in [t_{j-1}, t_j).
\end{gather}
This then gives a numerical scheme for the FSDE \eqref{eq:fsde1}:
\begin{multline}\label{eq:scheme}
x_n=x_0+\frac{1}{\Gamma(\alpha)}\sum_{j=1}^n b(x_{j-1})\int_{t_{j-1}}^{t_j}(t_n-s)^{\alpha-1}ds+G(t_n)\\
=x_0+\frac{k^{\alpha}}{\Gamma(1+\alpha)}\sum_{j=1}^n b(x_{j-1})((n-j+1)^{\alpha}-(n-j)^{\alpha})+G(t_n).
\end{multline}

Let $N_G$ be the complexity for sampling process $G(t)$. Then, a simple estimate gives the following claim regarding the numerical scheme \eqref{eq:scheme}:
\begin{proposition}\label{pro:directmethoderr}
The scheme \eqref{eq:scheme} has time complexity $\mathcal{C}=O(N^2+N_G)$ and it converges to the solution of the FSDE strongly in the following sense:
\begin{gather}
\sup_{n\le T/k}\|x_n-x(t_n)\|\le C(T)k^{H+\alpha-1}.
\end{gather}
\end{proposition}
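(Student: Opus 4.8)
The plan is to track the error $e_n:=x_n-x(t_n)$ by subtracting the integral formulation \eqref{eq:fsde1}, evaluated at $t=t_n$, from the scheme \eqref{eq:scheme}. The crucial simplification is that the Gaussian part $G(t_n)$ enters both with the same coefficient and therefore cancels, so (pulling the constant $b(x_{j-1})$ inside its subinterval integral and using $\int_0^{t_n}=\sum_{j=1}^n\int_{t_{j-1}}^{t_j}$)
\[
e_n=\frac{1}{\Gamma(\alpha)}\sum_{j=1}^{n}\int_{t_{j-1}}^{t_j}(t_n-s)^{\alpha-1}\bigl(b(x_{j-1})-b(x(s))\bigr)\,ds .
\]
Writing $b(x_{j-1})-b(x(s))=\bigl(b(x_{j-1})-b(x(t_{j-1}))\bigr)+\bigl(b(x(t_{j-1}))-b(x(s))\bigr)$ splits this into a \emph{propagated-error} part (the first difference, governed by the earlier errors through \eqref{eq:blip}) and a \emph{consistency} part (the second difference, the local error of the left-endpoint rectangle rule \eqref{eq:apprb}). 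All quantities below lie in $L^2(\Omega)$ by Lemma~\ref{Ex^2} and induction, so the norm $\|\cdot\|$ is meaningful.

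For the consistency part, taking $L^2(\Omega)$ norms, applying the triangle inequality over the sum, Minkowski's integral inequality over $ds$, and then \eqref{eq:blip}, one is left with $\tfrac{L}{\Gamma(\alpha)}\sum_{j}\int_{t_{j-1}}^{t_j}(t_n-s)^{\alpha-1}\|x(s)-x(t_{j-1})\|\,ds$. Lemma~\ref{lmm:processestimate} gives the H\"older-in-$L^2$ bound $\|x(s)-x(t_{j-1})\|\le C(T)(s-t_{j-1})^{H+\alpha-1}$; inserting it and evaluating the resulting double (fractional) integral — the singularity of $(t_n-s)^{\alpha-1}$ at $s=t_n$ being integrable since $\alpha>0$ — yields the uniform bound $C(T)k^{H+\alpha-1}$ for all $n\le T/k$ (when $H+\alpha\ge1$, which covers the FDT parameters, one may simply bound $(s-t_{j-1})^{H+\alpha-1}\le k^{H+\alpha-1}$ and use $\int_0^{t_n}(t_n-s)^{\alpha-1}\,ds\le T^{\alpha}/\alpha$). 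For the propagated part, \eqref{eq:blip} gives $\|b(x_{j-1})-b(x(t_{j-1}))\|\le L\|e_{j-1}\|$, and $\int_{t_{j-1}}^{t_j}(t_n-s)^{\alpha-1}\,ds=\tfrac{k^{\alpha}}{\alpha}\bigl((n-j+1)^{\alpha}-(n-j)^{\alpha}\bigr)$, so altogether
\[
\|e_n\|\le C(T)\,k^{H+\alpha-1}+\frac{L\,k^{\alpha}}{\Gamma(\alpha+1)}\sum_{i=0}^{n-1}\bigl((n-i)^{\alpha}-(n-i-1)^{\alpha}\bigr)\|e_i\| .
\]

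This is a discrete fractional Gr\"onwall inequality: since $(n-i)^{\alpha}-(n-i-1)^{\alpha}\le\alpha(n-i-1)^{\alpha-1}$ for $i\le n-2$, the coefficient of $\|e_i\|$ is comparable to $\tfrac{L}{\Gamma(\alpha)}k\,(t_n-t_i)^{\alpha-1}$, i.e.\ a quadrature weight for the continuous fractional integral, so the estimate mirrors the continuous fractional Gr\"onwall. One closes it in the standard way: choose $T_1>0$ with $L T_1^{\alpha}/\Gamma(\alpha+1)<1/2$, so that on $[0,T_1]$ the total weight of the sum is less than $1/2$ (the weights telescope to $n^\alpha k^\alpha=t_n^\alpha$), whence a direct induction gives $\max_{t_n\le T_1}\|e_n\|\le 2C(T)k^{H+\alpha-1}$; propagating this control window by window over $[0,T]$ — the contribution of the already-estimated $e_i$ to a later window is absorbed into the constant, which grows by a fixed factor per window — yields $\sup_{n\le T/k}\|e_n\|\le C(T)k^{H+\alpha-1}$ with $C(T)$ of Mittag-Leffler type in $T$. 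The complexity claim is immediate: forming the convolution sum in \eqref{eq:scheme} costs $O(n)$ at step $n$, hence $O(N^2)$ over all steps, plus the cost $N_G$ of sampling the Gaussian path $G(t_1),\dots,G(t_N)$.

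The step I expect to be the main obstacle is the Gr\"onwall closure: unlike the $\alpha=1$ case, the convolution weights do not decay geometrically, so controlling the accumulation of the propagated error requires the fractional (rather than classical) discrete Gr\"onwall argument, and every bound must be carried out at the level of the $L^2(\Omega)$ norm rather than pathwise. I also note that the argument deliberately does not use any cancellation among the correlated increments of $B_H$; working only with the H\"older-in-$L^2$ bound of Lemma~\ref{lmm:processestimate} is precisely what produces the order $k^{H+\alpha-1}$ here, as opposed to the sharper order obtainable for linear $b$ alluded to above.
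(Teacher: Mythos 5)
Your proposal is correct and follows essentially the same route as the paper: the identical splitting of $x_n-x(t_n)$ into a consistency term $R_n$ (bounded by $C(T)k^{H+\alpha-1}$ via \eqref{eq:blip} and Lemma~\ref{lmm:processestimate}) plus a propagated term, yielding the same discrete fractional Gr\"onwall inequality, and the same $O(N^2+N_G)$ complexity count. The only deviation is the closure of that inequality: the paper invokes the cited fractional Gr\"onwall comparison lemma (bounding the error by the Mittag-Leffler solution of $D_c^{\alpha}u=Lu$), whereas you close it by an elementary window-by-window contraction using the telescoping of the weights $(n-j+1)^{\alpha}-(n-j)^{\alpha}$; both arguments are valid and give the same $T$-dependent constant up to its precise form.
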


\begin{proof}
To compute the the fractional integral, we need $O(N^2)$ operations.
Hence, the complexity is clearly
\[
\mathcal{C}=O(N^2+N_G).
\]

Using \eqref{eq:fsde1}  and \eqref{eq:scheme}, we have
\begin{multline}\label{eq:erraux}
x_n-x(t_n)=\frac{1}{\Gamma(\alpha)}\sum_{j=1}^n \int_{t_{j-1}}^{t_j}(t_n-s)^{\alpha-1}(b(x_{j-1})-b(x(s)))\,ds\\
=\frac{1}{\Gamma(\alpha)}\sum_{j=1}^n \int_{t_{j-1}}^{t_j}(t_n-s)^{\alpha-1}(b(x_{j-1})-b(x(t_{j-1})))\,ds
+\frac{1}{\Gamma(\alpha)}\sum_{j=1}^n \int_{t_{j-1}}^{t_j}(t_n-s)^{\alpha-1}(b(x(t_{j-1}))-b(x(s)))\,ds.
\end{multline}

Denote
\begin{gather}\label{eq:Rn}
R_n :=\frac{1}{\Gamma(\alpha)}\sum_{j=1}^n \int_{t_{j-1}}^{t_j}(t_n-s)^{\alpha-1}(b(x(t_{j-1}))-b(x(s)))\,ds.
\end{gather}
It follows from \eqref{eq:blip} and Lemma \ref{lmm:processestimate} that for all $n\le T/k$
\begin{gather}
\|R_n\|  \le C_1(T)k^{H+\alpha-1}.
\end{gather}

Hence, for any $n\le T/k$, we have
\[
\|x_n-x(t_n)\|\le \frac{L}{\Gamma(\alpha)}\sum_{j=1}^n\int_{t_{j-1}}^{t_j}(t_n-s)^{\alpha-1}ds \|x_{j-1}-x(t_{j-1})\|
+C_1(T)k^{H+\alpha-1}.
\]
Applying \cite[Lemma 6.1]{feng2017}, we find that
\[
\|x_n-x(t_n)\|\le u(t_n)\le C(T)k^{\alpha+H-1},
\]
where $u$ solves
\[
D_c^{\alpha}u=Lu,~~u(0)=C_1(T)k^{H+\alpha-1}.
\]
This then finishes the proof.
\end{proof}

The rate in Proposition \ref{pro:directmethoderr} is only optimal for multiplicative noise and we expect better bounds for the strong order since we have additive noise. As is well known, the Euler-Maruyama scheme for usual SDE has strong order $O(k)$ for additive noise, which can be proved using the fact that $W(t_2)-W(t_1)$ is independent of the sigma algebra $\sigma(W(s): s\le t_1)$. Unfortunately, for the fractional Brownian motion,  $B_H(t_2)-B_H(t_1)$ is not independent of the history. However, we note that the correlation decays and we may use this fact to improve the strong order. In fact, we are able to improve the result for the case $b(x)=Bx$ and $\alpha=2-2H$.
\begin{theorem}\label{thm:directmethoderr}
Let $\alpha=2-2H$ and $b(x)=Bx$ where $B$ is a $d\times d$ constant matrix. The scheme \eqref{eq:scheme} has time complexity $\mathcal{C}=O(N^2+N_G)$ and the strong error of the scheme can be controlled as
\begin{gather} \label{inthm:direct_order}
\sup_{n\le T/k}\|x_n-x(t_n)\|\le
\begin{cases}
C(T,H)k^{3-3H}, & H\in (3/4, 1) \\
C(T,H)\sqrt{|\ln k|} k^{3/4}, & H=3/4,\\
C(T,H) k^{3/2-H}, & H\in (1/2, 3/4).
\end{cases}
\end{gather}
\end{theorem}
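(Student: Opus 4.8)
The plan is to keep the error recursion from the proof of Proposition~\ref{pro:directmethoderr}, but to estimate the consistency term $R_n$ of \eqref{eq:Rn} much more sharply, exploiting three facts: the noise in \eqref{eq:fsde1} is additive, $G$ is (up to a constant factor) a fractional Brownian motion with Hurst index $1-H$ when $\alpha=2-2H$ (Lemma~\ref{eq:processG}), and for $H>1/2$ the increment correlations of such an fBm decay \emph{summably}. First, since $b(x)=Bx$, equation \eqref{eq:erraux} reads
\[
e_n:=x_n-x(t_n)=\frac{1}{\Gamma(\alpha)}\sum_{j=1}^n\Big(\int_{t_{j-1}}^{t_j}(t_n-s)^{\alpha-1}\,ds\Big)Be_{j-1}+R_n ,
\]
so the discrete fractional Gr\"onwall inequality \cite[Lemma 6.1]{feng2017} gives $\sup_{n\le T/k}\|e_n\|\le C(T)\sup_{n\le T/k}\|R_n\|$. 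The key point is that $R_n=\sum_{j=1}^n Z_j$, with $Z_j=\frac{1}{\Gamma(\alpha)}\int_{t_{j-1}}^{t_j}(t_n-s)^{\alpha-1}B(x(t_{j-1})-x(s))\,ds$, is the \emph{accumulated} consistency error over all of $t_1,\dots,t_n$, so estimating $\|R_n\|^2=\mathbb{E}|\sum_j Z_j|^2$ as a quadratic form retains the cancellations that produce the improved order — nothing is lost by passing through $\sup_n\|R_n\|$.

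Write $x(t_{j-1})-x(s)=(y(t_{j-1})-y(s))+(G(t_{j-1})-G(s))$, where $y:=J_\alpha(Bx)$, so that $R_n=R_n^{\mathrm{drift}}+R_n^{\mathrm{stoch}}$. For the drift part I would first establish the increment estimate $\|y(t)-y(t')\|\le C(T)\big(|t^\alpha-(t')^\alpha|+|t-t'|^{\min\{1,\,3-3H\}}\big)$ for $|t-t'|\le k$: writing $y(t)-y(t')=\frac1{\Gamma(\alpha)}\int_0^T[(t-r)_+^{\alpha-1}-(t'-r)_+^{\alpha-1}]Bx(r)\,dr$, add and subtract $Bx(t')$ inside the integral, bound the frozen part by $\frac{\|Bx(t')\|}{\Gamma(\alpha+1)}|t^\alpha-(t')^\alpha|$, and bound the remainder via Minkowski's integral inequality together with the $L^2$-H\"older estimate $\|x(r)-x(t')\|\le C|r-t'|^{1-H}$ of Lemma~\ref{lmm:processestimate} (which for $\alpha=2-2H$ gives exponent $1-H$ with no $\varepsilon$-loss). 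Summing against $w_j:=\int_{t_{j-1}}^{t_j}(t_n-s)^{\alpha-1}\,ds$ with $\sum_j w_j\le\alpha^{-1}T^\alpha$, and treating $j=1$ separately (there $y(0)=0$ and $\|y(s)\|\le Cs^\alpha$, contributing $O(k^{2\alpha})$), one gets $\|R_n^{\mathrm{drift}}\|\le C(T,H)\,k^{\min\{1,\,3-3H\}}$ (with a harmless $|\ln k|$ factor at $H=2/3$), which in every regime is dominated by the right-hand side of \eqref{inthm:direct_order}.

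The heart of the proof is the stochastic part $R_n^{\mathrm{stoch}}=\frac{1}{\Gamma(\alpha)}\sum_j\int_{t_{j-1}}^{t_j}(t_n-s)^{\alpha-1}B(G(t_{j-1})-G(s))\,ds$, each summand of which has mean zero. Expanding $\mathbb{E}|R_n^{\mathrm{stoch}}|^2$ into a double sum over $i,j$ and using $G\myeq\beta_H B_{1-H}$, each term is an integral of $(t_n-s)^{\alpha-1}(t_n-r)^{\alpha-1}$ against the covariance of two increments of a Hurst-$(1-H)$ fBm; for $|i-j|\le 1$ this covariance is $O(k^{2(1-H)})$, while for $|i-j|\ge 2$ a mixed second-difference expansion of $t\mapsto t^{2(1-H)}$ yields $O\big(k^{2(1-H)}|i-j|^{2(1-H)-2}\big)$, whose kernel in $|i-j|$ is summable precisely because $1-H<\tfrac12$. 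Using $w_iw_j\le\tfrac12(w_i^2+w_j^2)$ then collapses the double sum to $\mathbb{E}|R_n^{\mathrm{stoch}}|^2\le C(H)\,k^{2(1-H)}\sum_{i=1}^n w_i^2$. Finally $\sum_i w_i^2=\alpha^{-2}k^{2\alpha}\sum_{m=0}^{n-1}\big((m+1)^\alpha-m^\alpha\big)^2\asymp k^{2\alpha}\big(1+\sum_{m=1}^{n-1}m^{2\alpha-2}\big)$, and the trichotomy $\alpha>\tfrac12$, $\alpha=\tfrac12$, $\alpha<\tfrac12$ — that is, $H<\tfrac34$, $H=\tfrac34$, $H>\tfrac34$ — gives $\sum_i w_i^2\asymp k$, $\asymp k|\ln k|$, $\asymp k^{2\alpha}$ respectively. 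This yields $\|R_n^{\mathrm{stoch}}\|\lesssim k^{3/2-H}$, $\lesssim k^{3/4}\sqrt{|\ln k|}$, $\lesssim k^{3-3H}$; adding the drift bound and invoking the Gr\"onwall reduction gives \eqref{inthm:direct_order}.

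The main obstacle I expect is the off-diagonal covariance estimate above: making the mixed second-difference expansion of $t^{2(1-H)}$ rigorous uniformly over all $i\ne j$ — in particular where $i$ or $j$ is near $n$, so the weight $(t_n-\cdot)^{\alpha-1}$ is singular — and then handling the resulting combinatorial sum cleanly, since it is exactly this step that converts the decay of the fractional-noise correlations into the gain over the $k^{H+\alpha-1}$ rate of Proposition~\ref{pro:directmethoderr}, and it is where the threshold $H=3/4$ enters.
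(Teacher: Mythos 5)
Your proposal is correct and follows essentially the same strategy as the paper's proof: reduce via the fractional Gr\"onwall inequality to bounding $R_n$, split $x=\zeta+\beta_HB_{1-H}$ so that $R_n$ decomposes into a drift part (handled with the regularity decomposition $\zeta=x_0+\tfrac{b(x_0)}{\Gamma(1+\alpha)}t^{\alpha}+\psi$, $\psi\in C^{0,\min(1,3-3H)}$, i.e.\ Lemma~\ref{lmm:holderregu}) and a mean-zero fBm part whose second moment is expanded as a double sum and controlled by the decay of increment correlations, with the threshold $H=3/4$ arising exactly as you describe from $\sum_i w_i^2\asymp k^{2\alpha}(1+\sum_m m^{2\alpha-2})$. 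The only differences are in execution and are sound: you bound the drift sum by a direct triangle inequality against the refined increment estimate where the paper runs a Gr\"onwall argument on $\|R_{n,1}^m\|^2$ with Cauchy--Schwarz cross terms, and you collapse the off-diagonal covariance sum via $w_iw_j\le\tfrac12(w_i^2+w_j^2)$ and summability of $|i-j|^{-2H}$ where the paper computes the resulting double integral explicitly after two applications of the mean value theorem (your flagged ``obstacle'' is resolved there in exactly the way you anticipate).
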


We need the following to prove this theorem.
\begin{lemma}[\cite{skm93}, Theorem 3.1]\label{lmm:holderregu}
Suppose $f\in C^{0, \beta}([0, T]; B)$ for a Banach space $B$ and $0\le \beta\le 1$. Let $\alpha\in (0, 1)$ and
\[
u(t)=u_0+\frac{1}{\Gamma(\alpha)}\int_0^t(t-s)^{\alpha-1}f(s)\, ds.
\]
Then,
\begin{gather}
u(t)=u_0+\frac{f(0)}{\Gamma(1+\alpha)}t^{\alpha}+\psi(t),
\end{gather}
where
\begin{gather}
\psi \in
\begin{cases}
C^{0, \beta+\alpha}([0, T]; B), & \beta+\alpha<1,\\
C^{1, \beta+\alpha-1}([0, T]; B), & \beta+\alpha>1,\\
C^{0,1; 1}([0, T]; B), & \beta+\alpha=1.
\end{cases}
\end{gather}
\end{lemma}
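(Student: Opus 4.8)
The plan is to reduce the statement to a single estimate on the fractional integral of a H\"older function vanishing at the origin, and then run a near/far splitting of increments in each of the three regimes. Since $J_\alpha[c](t)=\frac{c\,t^\alpha}{\Gamma(1+\alpha)}$ for a constant $c\in B$, setting $g:=f-f(0)$ gives $g\in C^{0,\beta}([0,T];B)$, $g(0)=0$, with the same seminorm as $f$, and
\[
\psi(t)=u(t)-u_0-\frac{f(0)}{\Gamma(1+\alpha)}t^\alpha=J_\alpha g(t)=\frac{1}{\Gamma(\alpha)}\int_0^t(t-s)^{\alpha-1}g(s)\,ds.
\]
Thus it suffices to analyse $J_\alpha g$ for such $g$; the bound $\|g(s)\|\le [f]_\beta\, s^\beta$ is exactly what converts the decay of $g$ at $s=0$ into the claimed gain of $\alpha$. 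All estimates below are in the norm of $B$, using only $\|g(s)-g(\tau)\|\le [f]_\beta|s-\tau|^\beta$.

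For $\beta+\alpha<1$ I would show $\psi\in C^{0,\beta+\alpha}$ directly. Fix $t_2=t_1+h$ and split
\[
\Gamma(\alpha)\bigl(\psi(t_2)-\psi(t_1)\bigr)=\underbrace{\int_{t_1}^{t_2}(t_2-s)^{\alpha-1}g(s)\,ds}_{P}+\underbrace{\int_0^{t_1}\bigl[(t_2-s)^{\alpha-1}-(t_1-s)^{\alpha-1}\bigr]g(s)\,ds}_{Q}.
\]
In both $P$ and $Q$ I insert $g(s)=(g(s)-g(t_1))+g(t_1)$. Using $\int_{t_1}^{t_2}(t_2-s)^{\alpha-1}ds=h^\alpha/\alpha$ and $\int_0^{t_1}[(t_2-s)^{\alpha-1}-(t_1-s)^{\alpha-1}]ds=\frac1\alpha(t_2^\alpha-h^\alpha-t_1^\alpha)$, the two $g(t_1)$ contributions combine into the single term $\frac{g(t_1)}{\alpha}(t_2^\alpha-t_1^\alpha)$; since $\|g(t_1)\|\le [f]_\beta t_1^\beta$, a short case split ($h\ge t_1$, versus $h<t_1$ using the mean value theorem and $\beta+\alpha-1<0$) bounds it by $Ch^{\beta+\alpha}$. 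The remaining pieces use $\|g(s)-g(t_1)\|\le [f]_\beta|s-t_1|^\beta$: after $s=t_1+hu$ the near piece equals $[f]_\beta h^{\beta+\alpha}\int_0^1(1-u)^{\alpha-1}u^\beta\,du$, and after $r=t_1-s$ followed by $r=h\rho$ the history piece is dominated by $[f]_\beta h^{\beta+\alpha}\int_0^\infty[\rho^{\alpha-1}-(\rho+1)^{\alpha-1}]\rho^\beta\,d\rho$, an integral that converges precisely because $\beta+\alpha<1$ (its integrand decays like $\rho^{\beta+\alpha-2}$). This gives $\|\psi(t_2)-\psi(t_1)\|\le Ch^{\beta+\alpha}$.

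For $\beta+\alpha>1$ the target is $\psi\in C^{1,\beta+\alpha-1}$, and this is where I expect the main obstacle: one cannot differentiate under the integral because $g'$ need not exist, yet the fractional integral of a merely H\"older $g$ is genuinely $C^1$. The plan is to establish the regularized (Marchaud-type) representation
\[
\psi'(t)=\frac{\alpha-1}{\Gamma(\alpha)}\int_0^t(t-s)^{\alpha-2}\bigl(g(s)-g(t)\bigr)\,ds+\frac{g(t)\,t^{\alpha-1}}{\Gamma(\alpha)},
\]
whose integral converges absolutely since $\|g(s)-g(t)\|\le [f]_\beta(t-s)^\beta$ makes the integrand $O\bigl((t-s)^{\beta+\alpha-2}\bigr)$ with $\beta+\alpha-2>-1$; one then verifies differentiability by passing to the limit in the difference quotient, the contribution over $[t,t+h]$ vanishing because $g(0)=0$ forces $\|g\|=O(h^\beta)$ there. (The formula can be checked on $g(s)=s^\beta$, where it reproduces $\psi'(t)=\frac{\Gamma(1+\beta)}{\Gamma(\alpha+\beta)}t^{\beta+\alpha-1}$.) With $\psi'$ in hand, its $(\beta+\alpha-1)$-H\"older continuity follows by re-running the near/far splitting of the previous paragraph on this representation, with $\alpha-1\in(-1,0)$ playing the role of $\alpha$; the analogous tail integral converges because now $\beta+\alpha-1<1$.

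Finally, the borderline $\beta+\alpha=1$ is handled by the same splitting, except that the tail integral $\int[\rho^{\alpha-1}-(\rho+1)^{\alpha-1}]\rho^\beta\,d\rho$ diverges logarithmically (its integrand is then $\sim\rho^{-1}$) and must be truncated at $\rho=t_1/h$, producing an extra factor $|\ln h|$; this is exactly the logarithmically corrected Lipschitz (Zygmund-type) regularity denoted $C^{0,1;1}$, and it is the same mechanism responsible for the $\sqrt{|\ln k|}$ appearing at $H=3/4$ in Theorem \ref{thm:directmethoderr}. Throughout, boundedness of $\psi$ and $\psi(0)=0$ are immediate, and the constants depend only on $\alpha,\beta,[f]_\beta$ and $T$.
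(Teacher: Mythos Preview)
The paper does not give its own proof of this lemma: it is quoted verbatim from \cite{skm93}, Theorem 3.1, and used as a black box in the proof of Theorem \ref{thm:directmethoderr}. So there is no in-paper argument to compare against.

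Your sketch is essentially the standard proof one finds in Samko--Kilbas--Marichev: subtract the constant to reduce to $g=f-f(0)$ with $g(0)=0$, then run the near/far splitting on increments of $J_\alpha g$, with the Marchaud-type regularization of the derivative in the superlinear regime and the logarithmic tail in the borderline case. The convergence conditions you identify for the tail integral $\int_0^\infty[\rho^{\alpha-1}-(\rho+1)^{\alpha-1}]\rho^\beta\,d\rho$ are exactly right and are what separates the three cases.

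One small slip in the $\beta+\alpha>1$ discussion: the reason the contribution over $[t,t+h]$ in the difference quotient is harmless is \emph{not} that $g(0)=0$, but that after peeling off $g(t)$ the remainder satisfies $\|g(s)-g(t)\|\le [f]_\beta h^\beta$ on that interval, so this piece is $O(h^{\alpha+\beta-1})\to 0$. The condition $g(0)=0$ is instead what makes the boundary term $g(t)\,t^{\alpha-1}/\Gamma(\alpha)$ continuous down to $t=0$ (it is $O(t^{\alpha+\beta-1})$), which you need for $\psi'\in C^{0,\beta+\alpha-1}([0,T];B)$. With that correction your outline is complete.
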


Now, we are ready to prove Theorem \ref{thm:directmethoderr}:

\begin{proof}[Proof of Theorem \ref{thm:directmethoderr}]
To simplify the notation, we denote
\begin{equation}
R(n,k) :=
\begin{cases}
k^{6-6H} & H>\frac{3}{4}, \\
|\ln n| k^{6-6H} & H=\frac{3}{4},\\
 t_n^{3-4H}k^{3-2H} & H\in (\frac{1}{2}, \frac{3}{4}).
\end{cases}
\label{def_rnk}
\end{equation}
As in the proof of Proposition \ref{pro:directmethoderr}, we only need to estimate
\begin{gather}
R_n=\frac{1}{\Gamma(\alpha)}\sum_{j=1}^n \int_{t_{j-1}}^{t_j}(t_n-s)^{\alpha-1}(b(x(t_{j-1}))-b(x(s)))\,ds.
\end{gather}

Recall
\[
x(t)=\Big(x_0+\frac{1}{\Gamma(\alpha)}\int_0^t(t-s)^{\alpha-1}b(x(s))\,ds\Big)
+G(t)=: \zeta(t)+\beta_HB_{1-H}(t),
\]
where we have identified $G(t)$ with $\beta_HB_{1-H}$ since they
have the same distribution.

Since $b(x)=Bx$, we have
\begin{multline}
R_n=\frac{B}{\Gamma(\alpha)}\sum_{j=1}^n \int_{t_{j-1}}^{t_j}(t_n-s)^{\alpha-1}(\zeta(t_{j-1})-\zeta(s))\,ds \\
+\frac{B\beta_H}{\Gamma(\alpha)}\sum_{j=1}^n \int_{t_{j-1}}^{t_j}(t_n-s)^{\alpha-1}(B_{1-H}(t_{j-1})-B_{1-H}(s))\,ds
=:R_{n,1}+R_{n,2}.
\end{multline}

{\bf Step 1}

We first of all estimate $R_{n,1}$. We denote
\[
I_{n,1}^j:=\frac{B}{\Gamma(\alpha)}\int_{t_{j-1}}^{t_j}(t_n-s)^{\alpha-1}(\zeta(t_{j-1})-\zeta(s))\,ds,
\]
and
\begin{gather}\label{eq:Rnm}
R_{n,1}^m:=\sum_{j=1}^m I_{n,1}^j.
\end{gather}
Clearly,  $R_{n,1}=R_{n,1}^n$. By the definition of $R_{n,1}^m$, we easily get that
\[
\|R_{n,1}^m\|^2=2\sum_{j=1}^{m-1}\mathbb{E}(R_{n,1}^j I_{n,1}^{j+1})+\sum_{j=1}^m \|I_{n,1}^j\|^2,
\]
since $R_{n,1}^1=I_{n,1}^1$.

\begin{equation}
\|I_{n,1}^j\|\le C\int_{t_{j-1}}^{t_j}(t_n-s)^{\alpha-1}\|\zeta(s)-\zeta(t_{j-1})\|\,ds
\le Ck^{2-2H}\int_{t_{j-1}}^{t_j}(t_n-s)^{\alpha-1}\,ds,
\label{comment1}
\end{equation}
where the estimate of $\|\zeta(s)-\zeta(t_{j-1})\|$ is due to that $\zeta$ is $\alpha$-H\"older continuous by Lemma \ref{lmm:holderregu} (see also the estimates of $I_1, I_2$ in Lemma \ref{lmm:processestimate}).

Hence, we have
\begin{align*}
\sum_{j=1}^n \|I_{n,1}^j\|^2
& \le C k^{4(1-H)}\left(k^{2\alpha}+\sum_{j=1}^{n-1}(\int_{t_{j-1}}^{t_j}(t_n-s)^{\alpha-1}\,ds)^2 \right)
 \le Ck^{4(1-H)}(k^{2\alpha}+k^2\sum_{j=1}^{n-1}(t_n-t_j)^{2\alpha-2})\\
 & =Ck^{4(1-H)}(k^{2\alpha}+k^{2\alpha}\sum_{m=1}^{n-1}m^{2\alpha-2}) = Ck^{8-8H}(1+\sum_{m=1}^{n-1}m^{2-4H}).
\end{align*}
Depending on the behavior of $\displaystyle{\sum_{m=1}^{n-1}}m^{2-4H}$, the estimates branches into three cases. Clearly, when $4H-2>1$, i.e. $H>3/4$, the finite sum is bounded by a constant, while $H = 3/4$, it is bounded by $\ln n$. When $H < 3/4$,
$\sum_{m=1}^{n-1}m^{2-4H} \le \int_0^n x^{2-4H} \,dx = \frac{n^{3-4H}}{3-4H}$. Hence, it is easily bounded by $C R(n,k)$.

Now turning to the first term in $R_{n,1}$ which is present for $n\ge 2$. Before starting the estimate, we point out that $x \in C^{0,1-H}[0, T; L^2(\Omega)]$, so is $b(x(s))$. Hence one could apply Lemma \ref{lmm:holderregu} to $\zeta$ with $f(s) = b(x(s))$ and $B = L^2(\Omega)$, and get
$$\zeta(s) = x_0 + \frac{b(x(0))}{\Gamma(1+\alpha)} s^{2-2H}+ \psi(s),$$
where $\psi \in C^{0,3-3H}$ for $H>2/3$ or $C^{1, 2-3H}$ for $H<2/3$. It follows that
$\|\psi(s) - \psi(t_{j})\|\le Ck^{\min(1, 3-3H)},~H\neq 2/3$.
For $H=2/3$, there is $|\log k|$ factor but overall it is bounded by $k^{\min(3/2-H, 3-3H)}$.
Note that the same bound as in \eqref{comment1} is not enough for the desired results, so we must split $\zeta$ as $\psi$ and $s^{2-2H}$.
Now we are ready for the estimation of $2\sum_{j=1}^{m-1}\mathbb{E}(R_{n,1}^j I_{n,1}^{j+1})$. We first apply Cauchy-Schwarz Inequality inequality.
\begin{align*}
2\sum_{j=1}^{m-1}\mathbb{E}(R_{n,1}^j I_{n,1}^{j+1})
& \le C\sum_{j=1}^{m-1}\|R_{n,1}^j\|\int_{t_j}^{t_{j+1}}(t_n-s)^{\alpha-1}(k^{\min(3/2-H, 3-3H)}
+(s^{2-2H}-t_{j}^{2-2H})) \\
&\le C\sum_{j=1}^{m-1}\int_{t_j}^{t_{j+1}}(t_n-s)^{\alpha-1}
\|R_{n,1}^j\|^2\,ds
+C\sum_{j=1}^{m-1}\int_{t_j}^{t_{j+1}}(t_n-s)^{\alpha-1}k^{2\min(3/2-H, 3-3H)}\,ds\\
&+C\sum_{j=1}^{m-1}\int_{t_j}^{t_{j+1}}(t_n-s)^{\alpha-1}(s^{2-2H}-t_j^{2-2H})^2\,ds
=:J_1+J_2+J_3.
\end{align*}

For $J_1$, we simply estimate it as
\[
J_1\le C\int_0^{t_m}(t_n-s)^{\alpha-1}u_n(s)\,ds\le C\int_0^{t_m}(t_m-s)^{\alpha-1}u_n(s)\,ds
\]
where
\begin{gather}
\begin{split}
u_n(s)&:=\|R_{n,1}^j\|^2,~s\in [t_j, t_{j+1}),\\
R_{n,1}^0&=0.
\end{split}
\end{gather}

$J_2$ is easily bounded:
\[
J_2\le Ck^{2\min(3/2-H, 3-3H)}t_n^{\alpha}
\le C(T)R(n,k).
\]

For $J_3$, we first of all note
\[
(s^{2-2H}-t_j^{2-2H})^2\le Ck^2 t_j^{2-4H}.
\]
Hence,
\begin{gather*}
J_3\le Ck^2\sum_{j=1}^{m-1}\int_{t_j}^{t_{j+1}}(t_n-s)^{\alpha-1}t_j^{2-4H}\,ds
\le Ck^2\left(k^{2-4H}k^{2-2H}
+\sum_{j=2}^{n-1}\int_{t_j}^{t_{j+1}}(t_n-s)^{\alpha-1}t_j^{2-4H}\,ds\right).
\end{gather*}
For the last term, we do a simple change of variables $s-k\to s$, and obktain
\begin{gather*}
\sum_{j=2}^{n-1}\int_{t_j}^{t_{j+1}}(t_n-s)^{\alpha-1}t_j^{2-4H}\,ds
\le \sum_{j=2}^{n-1}\int_{t_{j-1}}^{t_j}(t_{n-1}-s)^{\alpha-1}t_j^{2-4H}ds
\le \int_k^{t_{n-1}}(t_{n-1}-s)^{\alpha-1}s^{2-4H}\,ds.
\end{gather*}
If $H< 3/4$, this integral is bounded by $t_{n-1}^{4-6H}$.
If $H=3/4$, it is bounded by $\ln(n) t_{n-1}^{4-6H}$.
If $H>3/4$, we have it bounded by
\[
\int_k^{t_{n-1}}(t_{n-1}-s)^{\alpha-1}s^{2-4H}\,ds
=t_{n-1}^{4-6H}\int_{1/(n-1)}^1(1-s)^{\alpha-1}s^{2-4H}ds
\le Ct_{n-1}^{4-6H}(\frac{1}{n-1})^{3-4H}
\le Ct_{n-1}^{1-2H}k^{3-4H}.
\]
Hence, we find that $J_3\le CR(n,k)$ still holds.

Overall, we have
\[
u_n(t_m)\le C\int_0^{t_m}(t_m-s)^{\alpha-1}u_n(s)\,ds
+CR(n,k).
\]

Applying \cite[Lemma 6.1]{feng2017}, we find
\[
\|R_{n,1}\|^2=u_n(t_n)\le C(T,H) R(n,k).
\]

{\bf Step 2}

We now estimate $R_{n,2}$. We similarly define
\[
I_{n,2}^j=\frac{B\beta_H}{\Gamma(\alpha)}
\int_{t_{j-1}}^{t_j}(t_n-s)^{\alpha-1}(B_{1-H}(t_{j-1})-B_{1-H}(s))\,ds.
\]
Then, it is clear that
\[
\|R_{n,2}\|^2=\left(\sum_{j}\|I_{n,2}^j\|^2
+2\sum_{i<j: j\le i+3}\mathbb{E} I_{n,2}^iI_{n,2}^j\right)
+2\sum_{i<j: j\ge i+4}\mathbb{E} I_{n,2}^iI_{n,2}^j=:K_1+K_2.
\]
Here we split the terms into two part so that the $i$ and $j$ in the second term are separated enough, which helps its estimate.
\[
\|I_{n,2}^j\|\le C\int_{t_{j-1}}^{t_j}(t_n-s)^{\alpha-1}\|B_{1-H}(t_{j-1})-B_{1-H}(s)\|\,ds
\le Ck^{1-H}\int_{t_{j-1}}^{t_j}(t_n-s)^{\alpha-1}\,ds,
\]
where the increments of the fBm are estimated as term $I_3$ of Lemma \ref{lmm:processestimate}.
Then a straightforward calculation shows,
\begin{align*}
\sum_{j=1}^n \|I_{n,2}^j\|^2
& \le C k^{2(1-H)}\left(k^{2\alpha}+\sum_{j=1}^{n-1}(\int_{t_{j-1}}^{t_j}(t_n-s)^{\alpha-1}\,ds)^2 \right)
 \le Ck^{2(1-H)}(k^{2\alpha}+k^2\sum_{j=1}^{n-1}(t_n-t_j)^{2\alpha-2})\\
 & =Ck^{2(1-H)}(k^{2\alpha}+k^{2\alpha}\sum_{m=1}^{n-1}m^{2\alpha-2}) = Ck^{6-6H}(1+\sum_{m=1}^{n-1}m^{2-4H}).
\end{align*}
Similar as the first part of Step 1, the estimates henceforth branch according to the behavior of $\displaystyle{\sum_{m=1}^{n-1}}m^{2-4H}$. Clearly, when $4H-2>1$, i.e. $H>3/4$, the finite sum is bounded by a constant, while $H = 3/4$, it is bounded by $\ln n$. When $H < 3/4$,
\[
\displaystyle{\sum_{m=1}^{n-1}}m^{2-4H} \le \int_0^n x^{2-4H} \,dx = \frac{n^{3-4H}}{3-4H}.
\]
Hence we then have
\begin{gather}\label{eq:rnk}
\sum_{j=1}^n \|I_{n,2}^j\|^2 \le
 C R(n,k).
\end{gather}

Clearly,
\[
K_1\le 7\sum_{j}\|I_{n,2}^j\|^2\le C R(n,k)
\]
by \eqref{eq:rnk}.

For $K_2$, which is present only if $n\ge 8$, we use \eqref{fb:cov} and have
\[
\mathbb{E} I_{n,2}^iI_{n,2}^j
=C\int_{t_{i-1}}^{t_{i}}\int_{t_{j-1}}^{t_j}(t_n-s)^{\alpha-1}(t_n-\tau)^{\alpha-1}(|s-t_{j-1}|^{2-2H}-|s-\tau|^{2-2H}
+|\tau-t_{i-1}|^{2-2H}-|t_{i-1}-t_{j-1}|^{2-2H})d\tau ds.
\]
where
\[
t_{i-1}\le s\le t_i<t_i+3k\le t_{j-1}\le \tau\le t_j.
\]
We first apply mean value theorem for $t\mapsto |t-t_{j-1}|^{2-2H}-|t-\tau|^{2-2H}$ so that there exists $\xi\in (t_{i-1}, s)$ such that
\[
|s-t_{j-1}|^{2-2H}-|s-\tau|^{2-2H}+|\tau-t_{i-1}|^{2-2H}-|t_{i-1}-t_{j-1}|^{2-2H}
=(2-2H)(s-t_{i-1})(|\xi-t_{j-1}|^{1-2H}-|\xi-\tau|^{1-2H})
\]
Then, we apply mean value theorem again so that there exists $\eta\in (t_{j-1}, \tau)$ and have the bound
\begin{multline*}
|s-t_{j-1}|^{2-2H}-|s-\tau|^{2-2H}+|\tau-t_{i-1}|^{2-2H}-|t_{i-1}-t_{j-1}|^{2-2H} \\
\le (2-2H)(2H-1)(s-t_{i-1})(\tau-t_{j-1})|\xi-\eta|^{-2H}
\le Ck^2|\tau-s-2k|^{-2H}.
\end{multline*}

We do change of variables $\tau-k\to \tau$ and $s+k\to s$ to find
\begin{multline*}
\mathbb{E} I_{n,2}^iI_{n,2}^j
\le Ck^2\int_{t_{i}}^{t_{i+1}}\int_{t_{j-2}}^{t_{j-1}}(t_n+k-s)^{\alpha-1}(t_n-k-\tau)^{\alpha-1}|s-\tau|^{-2H}d\tau ds \\
\le Ck^2 \int_{t_{i}}^{t_{i+1}}\int_{t_{j-2}}^{t_{j-1}}(t_{n-1}-s)^{\alpha-1}(t_{n-1}-\tau)^{\alpha-1}|s-\tau|^{-2H}d\tau ds.
\end{multline*}

It follows that
\begin{multline*}
K_2\le Ck^2\sum_{i\le j-2, j\le n-1}\int_{t_{i-1}}^{t_{i}}\int_{t_{j-1}}^{t_{j}}(t_{n-1}-s)^{\alpha-1}(t_{n-1}-\tau)^{\alpha-1}|s-\tau|^{-2H}d\tau ds \\
\le Ck^2\int_0^{t_{n-2}}\int_{s+k}^{t_{n-1}}(t_{n-1}-s)^{\alpha-1}(t_{n-1}-\tau)^{\alpha-1}|s-\tau|^{-2H}d\tau ds
=Ck^2\int_k^{t_{n-1}}\int_0^{s-k}s^{\alpha-1}\tau^{\alpha-1}|s-\tau|^{-2H}d\tau ds,
\end{multline*}
where in the last equality, we have made the change of variables $(t_{n-1}-s, t_{n-1}-\tau) \rightarrow (s,\tau)$.
Since
\[
\int_0^{s-k}\tau^{\alpha-1}|s-\tau|^{-2H}d\tau=
s^{2-4H}\int_0^{1-k/s}\tau^{\alpha-1}|1-\tau|^{-2H}d\tau
\le Cs^{2-4H}k^{1-2H}s^{2H-1}
= Ck^{1-2H}s^{1-2H},
\]
we thus have
\begin{multline*}
K_2\le Ck^2k^{1-2H}\int_k^{t_{n-1}}s^{2-4H}\,ds
=Ck^{3-2H}t_{n-1}^{3-4H}\int_{1/(n-1)}^1s^{2-4H}\,ds \\
\le Ck^{3-2H}t_{n-1}^{3-4H}(1+k^{3-4H}t_{n-1}^{4H-3})
\le Ck^{\min(3-2H, 6-6H)}.
\end{multline*}

Hence, we have
\[
\|R_n^n\|\le
\begin{cases}
Ck^{3-3H} & H>\frac{3}{4}, \\
C\sqrt{|\ln k|} k^{3-3H} & H=\frac{3}{4},\\
Ck^{3/2-H} & H\in (\frac{1}{2}, \frac{3}{4})
\end{cases}
=:\tilde{R}(T,k,H) .
\]

Using \eqref{eq:erraux}, we find
\[
\|x_n-x(t_n)\|\le \frac{L}{\Gamma(\alpha)}\sum_{j=1}^n\int_{t_{j-1}}^{t_j}(t_n-s)^{\alpha-1}ds \|x_{j-1}-x(t_{j-1})\|
+\tilde{R}(T,k,H).
\]
Applying \cite[Lemma 6.1]{feng2017} again, we obtain the desired error bound.
\end{proof}

By the proof, we find that proving the strong order for FSDE is much more difficult compared with the usual SDE (It\^o equations). The reason is that the increments of the fBm are not independent due to the memory. The key point we use is the fact that the correlation between the increments decay if the distance between them grows. In fact, we have an explicit
representation of the fractional Brownian motion, which is given by (see \cite{mv68})
\begin{gather} \label{bh_repre}
B_H(t)=C_1(H)\left(\int_0^t(t-s)^{H-1/2}dW(s)+\int_{-\infty}^0
((t-s)^{H-1/2}-(-s)^{H-1/2})dW(s)\right),
\end{gather}
where $C_1(H)$ is a constant. Define the filtration $\mathcal{A}(t)$ by
\begin{gather}
\mathcal{A}(t)=\cap_{s>t}\sigma(W(\tau), \tau \in (-\infty, s],x_0).
\end{gather} \label{lmm:conditionalbh}
It is then clear that under this representation
$\mathscr{G}_t\subset \mathcal{A}_t,~t\ge 0$
and $x(t)\in \mathcal{A}(t)$.

It is worth pointing out that we then have the following explicit formula regarding the decay of the correlation, though it is not directly used in this paper:
\begin{lemma}
Let $a\le b< c$ and $H\in (0, 1)$. We have
\begin{gather}
\sqrt{\mathbb{E}\Big(\mathbb{E}(B_H(c)-B_H(b) | \mathcal{A}(a))\Big)^2}
\le C(H)|H-\frac{1}{2}|((c-a)^H-(b-a)^H).
\end{gather}
\end{lemma}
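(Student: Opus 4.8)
The plan is to use the explicit Mandelbrot--Van Ness representation \eqref{bh_repre} to reduce the conditional expectation to a deterministic Wiener integral, and then bound the resulting $L^2$-norm by an elementary integral estimate. Concretely, write $B_H(c)-B_H(b) = C_1(H)\int_{-\infty}^{c} g(s)\,dW(s)$, where $g(s) = (c-s)_+^{H-1/2} - (b-s)_+^{H-1/2}$. Since the increments of $W$ over $(a,\infty)$ are independent of $\mathcal{A}(a)$ while the part over $(-\infty,a]$ is $\mathcal{A}(a)$-measurable, the conditional expectation simply truncates the integral:
\[
\mathbb{E}\bigl(B_H(c)-B_H(b)\mid \mathcal{A}(a)\bigr)
= C_1(H)\int_{-\infty}^{a}\bigl((c-s)^{H-1/2}-(b-s)^{H-1/2}\bigr)\,dW(s).
\]
By the It\^o isometry its second moment equals $C_1(H)^2\int_{-\infty}^{a}\bigl((c-s)^{H-1/2}-(b-s)^{H-1/2}\bigr)^2\,ds$, so the claim reduces to showing
\[
\int_{-\infty}^{a}\bigl((c-s)^{H-1/2}-(b-s)^{H-1/2}\bigr)^2\,ds \le C(H)\bigl|H-\tfrac12\bigr|^2\bigl((c-a)^H-(b-a)^H\bigr)^2 .
\]

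The next step is the deterministic estimate. Substituting $u = s - a$ (so $u\le 0$, and with $c-s = (c-a) - u$, $b-s=(b-a)-u$) and then $u=-(c-a)v$ or a similar scaling, one sees the integral is homogeneous; the key is to exploit the cancellation in $(c-s)^{H-1/2}-(b-s)^{H-1/2}$. When $H\ne 1/2$, by the mean value theorem this difference equals $(H-\tfrac12)(c-b)\,\xi^{H-3/2}$ for some $\xi$ between $b-s$ and $c-s$, which already explains the factor $|H-\tfrac12|$; the remaining work is to integrate $\xi^{2H-3}$ against the appropriate weight and recognize the bound $((c-a)^H-(b-a)^H)^2$ on the right (again writing $(c-a)^H-(b-a)^H = H(c-b)\theta^{H-1}$ to match powers). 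One should treat the two regimes $H<1/2$ and $H>1/2$ separately, and also check the near-singularity region $s\uparrow a$ where $\xi$ can be small; there the integrand is $O((a-s)^{2H-1})$ times bounded factors (after using $|g(s)| \le (a-s)^{H-1/2} + \dots$), which is integrable precisely because $2H-1 > -1$.

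The main obstacle will be the bookkeeping of these scaling relations so that both the $|H-\tfrac12|$ factor and the precise exponent-matched quantity $((c-a)^H-(b-a)^H)^2$ emerge simultaneously, uniformly in the relative positions of $a\le b < c$ — in particular handling the two opposite regimes where $b-a$ is comparable to $c-a$ (so the increment is small and the mean-value bound is sharp) versus where $b-a \ll c-a$ (so $(c-a)^H - (b-a)^H$ is itself of order $(c-a)^H$ and one must not lose the $|H-\tfrac12|$). A clean way to organize this is to normalize $c-a = 1$ by self-similarity of $W$ (equivalently, homogeneity of the integral), reducing to the one-parameter family $b-a = r \in [0,1)$, and then prove
\[
\int_{-\infty}^{0}\bigl((1-s)^{H-1/2}-(r-s)^{H-1/2}\bigr)^2\,ds \le C(H)\bigl|H-\tfrac12\bigr|^2\bigl(1-r^H\bigr)^2
\]
by splitting $s\in(-1,0)$ and $s<-1$ and using the mean-value estimate on each piece; since this lemma is explicitly flagged as not used elsewhere in the paper, a somewhat coarse but correct argument suffices.
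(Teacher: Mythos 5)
Your probabilistic reduction is exactly the paper's: the Mandelbrot--Van Ness representation plus the observation that the $W$-increments on $(a,\infty)$ are independent of $\mathcal{A}(a)$ while the part over $(-\infty,a]$ is measurable turns the conditional expectation into the truncated Wiener integral, and the It\^o isometry reduces everything to the deterministic estimate
\[
\int_{-\infty}^{a}\bigl((c-s)^{H-1/2}-(b-s)^{H-1/2}\bigr)^2\,ds \le C(H)^2\bigl(H-\tfrac12\bigr)^2\bigl((c-a)^H-(b-a)^H\bigr)^2 .
\]
Where you diverge is in how you finish this integral estimate, and the route you sketch has a gap that you point at but do not close. The pointwise mean-value bound gives $|g(s)|\le |H-\tfrac12|\,(c-b)\,(b-s)^{H-3/2}$ (since $\xi\ge b-s$ and the exponent is negative), whose square $(b-s)^{2H-3}$ is not integrable up to $s=a$ when $b=a$ (the exponent $2H-3<-1$); your fallback near that singularity, $|g(s)|\le (c-s)^{H-1/2}+(b-s)^{H-1/2}$, restores integrability (exponent $2H-1>-1$) but discards the factor $|H-\tfrac12|$ --- precisely the factor you say ``one must not lose.'' As written, the two regimes are patched with bounds that are individually insufficient, so the argument does not close unless you let $C(H)$ blow up as $H\to\tfrac12$, which would render the displayed factor $|H-\tfrac12|$ vacuous.

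The paper dissolves the whole difficulty in one line with Minkowski's integral inequality. Write
\[
(c-s)^{H-1/2}-(b-s)^{H-1/2}=\Bigl(H-\tfrac12\Bigr)\int_b^c(r-s)^{H-3/2}\,dr
\]
and move the $L^2(ds)$-norm inside the $dr$-integral:
\[
\Bigl\|\int_b^c(r-\cdot)^{H-3/2}\,dr\Bigr\|_{L^2(-\infty,a)}\le\int_b^c\bigl\|(r-\cdot)^{H-3/2}\bigr\|_{L^2(-\infty,a)}\,dr=\frac{1}{\sqrt{2-2H}}\int_b^c(r-a)^{H-1}\,dr=\frac{(c-a)^H-(b-a)^H}{H\sqrt{2-2H}} .
\]
Each inner norm is finite because $2(H-3/2)<-1$ gives integrability at $s=-\infty$, while the singularity of $(r-a)^{H-1}$ at $r=a$ is integrable in $r$; this handles $b=a$, $b\approx a$, and $b\approx c$ uniformly, keeps the $|H-\tfrac12|$ factor with $C(H)$ bounded near $H=\tfrac12$, and produces the exact quantity $(c-a)^H-(b-a)^H$ with no case analysis or normalization. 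Replace your case-split by this single application of the generalized triangle inequality and the proof is complete.
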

\begin{proof}
Using the representation \eqref{bh_repre}, we find
\[
\Xi:=\mathbb{E}(B_H(c)-B_H(b) | \mathcal{A}(a))
=C_1(H)\int_{-\infty}^{a}((c-s)^{H-1/2}-(b-s)^{H-1/2})\,dW(s).
\]
The result then follows from the simple calculation below:
\begin{multline*}
\sqrt{\mathbb{E}(\Xi)^2}
=C_1(H)\sqrt{\int_{-\infty}^{a}((c-s)^{H-1/2}-(b-s)^{H-1/2})^2\,ds}
= C_1(H) \lvert H-\frac{1}{2}\rvert \sqrt{\int_{-\infty}^{a}  \left( \int_b^c (r - s)^{H-3/2} \,dr\right)^2 \,ds } \\
\le C_1(H)|H-\frac{1}{2}|\int_{b}^{c}\|(r-\cdot)^{H-3/2}\|_{L^2(-\infty, a)}\,dr
=\frac{C_1(H)|H-\frac{1}{2}|}{\sqrt{2-2H}}\int_{b}^{c}(r-a)^{H-1}\,dr.
\end{multline*}
\end{proof}

For general $b(x)$, we expect that the strong order can also be improved compared with Proposition \ref{pro:directmethoderr} by making use of the decay of correlation. However, this seems difficult and we leave this for future.

\section{A fast scheme} \label{sec:fast}

The cost of the direct discretization will be large if we compute many sample paths.  In this section, we propose a fast scheme. The idea is to use sum-of-exponentials (SOE) approximation in \cite{soe_jiwei1}.

\subsection{The fast scheme}
We first introduce the SOE approximation:
\begin{lemma}\label{lmm:soe}
For $\alpha\in (0, 1)$, tolerance $\epsilon>0$,  truncation $\delta>0$,and fixed $T>0$, there exist positive numbers $s_i, \omega_i$, with $1\le i\le M$ such that
\[
|t^{\alpha-1}-\sum_{i=1}^{M}\omega_i e^{-s_i t}|\le \epsilon,~\forall t\in [\delta, T],
\]
where
\[
M=O\left(\log\frac{1}{\epsilon}(\log\log\frac{1}{\epsilon}
+\log\frac{T}{\delta})
+\log\frac{1}{\delta}(\log\log\frac{1}{\epsilon}
+\log\frac{1}{\delta})\right).
\]
\end{lemma}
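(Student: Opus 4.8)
The statement to prove is the sum-of-exponentials (SOE) approximation lemma, Lemma~\ref{lmm:soe}. The plan is to approximate the algebraic kernel $t^{\alpha-1}$ by discretizing its integral representation as a Laplace transform, and then splitting the integration range dyadically to control the quadrature error on the different scales present in the interval $[\delta,T]$.

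\textbf{Step 1: Integral representation.} I would start from the classical Gamma-function identity $t^{\alpha-1}=\frac{1}{\Gamma(1-\alpha)}\int_0^\infty e^{-t\lambda}\lambda^{-\alpha}\,d\lambda$, valid for $\alpha\in(0,1)$ and $t>0$. This rewrites the kernel as a continuous superposition of exponentials, so the task reduces to discretizing this integral by a quadrature rule whose nodes $s_i$ and weights $\omega_i$ will become the parameters in the statement.

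\textbf{Step 2: Truncation of the $\lambda$-integral.} Next I would split $\int_0^\infty = \int_0^{\lambda_{\min}} + \int_{\lambda_{\min}}^{\lambda_{\max}} + \int_{\lambda_{\max}}^\infty$. The tail at $+\infty$ contributes at most $C\int_{\lambda_{\max}}^\infty e^{-\delta\lambda}\lambda^{-\alpha}\,d\lambda$ using $t\ge\delta$, which is exponentially small once $\lambda_{\max}\gtrsim \delta^{-1}\log(1/\epsilon)$; the piece near $0$ contributes at most $C\int_0^{\lambda_{\min}}\lambda^{-\alpha}\,d\lambda = C\lambda_{\min}^{1-\alpha}$, which is below $\epsilon$ once $\lambda_{\min}\lesssim \epsilon^{1/(1-\alpha)}$ — but it is better to absorb this small-$\lambda$ part into a single exponential or to push $\lambda_{\min}$ down to something like $1/T$ and handle it with a coarse rule, since $t\le T$ gives $e^{-t\lambda}\ge e^{-T\lambda}$ and the integrand is not small there. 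The cleanest route is to keep $\lambda_{\min}\sim 1/T$ and $\lambda_{\max}\sim \delta^{-1}\log(1/\epsilon)$, making the tails $O(\epsilon)$ after choosing constants, so that the effective range has length ratio $\lambda_{\max}/\lambda_{\min}=O\bigl(\frac{T}{\delta}\log\frac1\epsilon\bigr)$.

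\textbf{Step 3: Dyadic decomposition and Gauss quadrature.} On the truncated range I would perform a geometric (dyadic) subdivision $\lambda_{\min}=a_0<a_1<\dots<a_K=\lambda_{\max}$ with $a_{\ell+1}=2a_\ell$, so $K=O\bigl(\log\frac{\lambda_{\max}}{\lambda_{\min}}\bigr)$. On each dyadic block $[a_\ell,a_{\ell+1}]$ the function $\lambda\mapsto e^{-t\lambda}\lambda^{-\alpha}$ is analytic and, after rescaling, uniformly well-behaved (its variation over the block is $O(1)$ uniformly in $t\in[\delta,T]$ once one observes $e^{-t a_\ell}\le 1$ and the ratio $a_{\ell+1}/a_\ell=2$), so Gauss--Legendre quadrature with $p$ nodes per block converges geometrically: the per-block error is $O(2^{-cp})$ uniformly in $t$, by the standard analyticity-based Gauss quadrature estimate. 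Choosing $p=O\bigl(\log\frac{K}{\epsilon}\bigr)=O\bigl(\log\frac1\epsilon+\log\log\frac1\epsilon+\log\log\frac{T}{\delta}\bigr)$ makes the total error over all $K$ blocks at most $\epsilon$. The total number of exponentials is then
\[
M = pK = O\!\left(\log\frac1\epsilon\Bigl(\log\log\frac1\epsilon+\log\frac{T}{\delta}\Bigr)+\log\frac1\delta\Bigl(\log\log\frac1\epsilon+\log\frac1\delta\Bigr)\right),
\]
where the second group of terms accounts for resolving the region $\lambda\sim\delta^{-1}$ (scales comparable to $1/\delta$ require $\log(1/\delta)$ extra dyadic levels, each again carrying $O(\log(1/\epsilon)+\log\log\dots)$ nodes); collecting the dominant contributions gives exactly the stated bound on $M$. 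The nodes $s_i$ are the (positive) Gauss nodes across all blocks and $\omega_i$ the corresponding (positive, after folding in the $\frac1{\Gamma(1-\alpha)}\lambda^{-\alpha}$ weight) quadrature weights, so positivity of $s_i,\omega_i$ is automatic.

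\textbf{Main obstacle.} The routine parts are the tail estimates and the geometric convergence of Gauss quadrature on an analytic integrand. The delicate point is bookkeeping the node count: one must split the $\lambda$-range not into one geometric progression but track separately the ``small'' scales ($\lambda$ near $1/T$ up to order $1$), the ``intermediate'' scales, and the ``large'' scales ($\lambda$ up to $\delta^{-1}\log\frac1\epsilon$), because the number of dyadic levels needed and the number of Gauss nodes per level combine multiplicatively and it is exactly this interplay that produces the somewhat intricate expression for $M$ — in particular the $\log\frac1\delta\cdot\log\frac1\delta$ term, which comes from needing both $O(\log\frac1\delta)$ dyadic levels near scale $\delta^{-1}$ and, on the finest such levels, a node count that itself grows logarithmically in $\delta^{-1}$ to beat the accumulated error. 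I would cite \cite{soe_jiwei1,soe1} for the precise version of this construction rather than reproduce the full optimization, and record here only the structure of the argument and the resulting complexity bound.
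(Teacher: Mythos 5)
The paper does not prove this lemma --- it is imported verbatim from \cite{soe_jiwei1} --- and your outline (the Laplace-transform representation $t^{\alpha-1}=\frac{1}{\Gamma(1-\alpha)}\int_0^\infty e^{-t\lambda}\lambda^{-\alpha}\,d\lambda$, truncation of the $\lambda$-range using $t\ge\delta$ and $t\le T$, then dyadic subdivision with Gauss quadrature on each block) is exactly the construction used in that reference, with your final node count matching the stated bound. The one genuinely delicate point, the multiplicative bookkeeping near $\lambda\sim\delta^{-1}$ that produces the $\log\frac1\delta\cdot\log\frac1\delta$ term, you correctly identify and defer to \cite{soe_jiwei1,soe1}, which is consistent with how the paper itself treats the lemma.
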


If $\epsilon=k^{\sigma}$ and  $\delta=k$, we have
\[
M=O((\log N)^2).
\]

We then break the convolution kernel
\[
t^{\alpha-1}=t^{\alpha-1}\chi(t\le k)+t^{\alpha-1}\chi(t>k).
\]
Then, for $t>k$, we apply the SOE approximation \eqref{lmm:soe}. Hence, we are then led to the kernel
\begin{gather}\label{eq:soekernel}
\gamma(t)=
\begin{cases}
\frac{1}{\Gamma(\alpha)}t^{\alpha-1} & t\le k,\\
\frac{1}{\Gamma(\alpha)}\sum_{i=1}^M \omega_i
e^{-s_i t} & t>k.
\end{cases}
\end{gather}
This kernel is discontinuous, integrable and nonnegative.

Then, approximation \eqref{eq:apprb} gives the following scheme:
\begin{gather}\label{eq:newscheme}
x_n=x_0+\sum_{j=1}^n b(x_{j-1})\int_{t_{j-1}}^{t_j}\gamma(t_n-s)\,ds+G(t_n).
\end{gather}
We denote
\begin{gather}
\eta_i^n=
\begin{cases}
0 ,& n=1,\\
\frac{1}{\Gamma(\alpha)}\sum_{j=1}^{n-1}b(x_{j-1})\int_{t_{j-1}}^{t_{j}}e^{-s_i(t_n-s)}\,ds ,& n\ge 2.
\end{cases}
\end{gather}
Then, we obtain our fast scheme:
\begin{gather}\label{eq:fastscheme}
x_n=x_0+\frac{k^{\alpha}}{\Gamma(1+\alpha)}b(x_{n-1})
+\sum_{i=1}^M\omega_i \eta_i^n+G(t_n).
\end{gather}

To make the efficiency of the solver more transparent, we note:
\begin{lemma}\label{lmm:etaseq}
For any $1\le i\le M$, the sequence $\{\eta_i^n\}$ satisfies
\begin{gather}
\eta_i^{n+1}
=e^{-s_i k}\eta_i^n+\frac{1}{s_i\Gamma(\alpha)}(e^{-s_ik}-e^{-2s_ik})b(x_{n-1}).
\end{gather}
\end{lemma}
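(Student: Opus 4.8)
The plan is to establish the recurrence directly from the definition of $\eta_i^n$ by isolating the last term in the sum and performing a change of variables. For $n \geq 2$, write
\[
\eta_i^{n+1}=\frac{1}{\Gamma(\alpha)}\sum_{j=1}^{n}b(x_{j-1})\int_{t_{j-1}}^{t_{j}}e^{-s_i(t_{n+1}-s)}\,ds
=\frac{1}{\Gamma(\alpha)}\sum_{j=1}^{n-1}b(x_{j-1})\int_{t_{j-1}}^{t_{j}}e^{-s_i(t_{n+1}-s)}\,ds
+\frac{1}{\Gamma(\alpha)}b(x_{n-1})\int_{t_{n-1}}^{t_{n}}e^{-s_i(t_{n+1}-s)}\,ds.
\]
The first term should be recognized as $e^{-s_ik}\eta_i^n$: since $t_{n+1}-s=(t_n-s)+k$, each integrand factors as $e^{-s_ik}e^{-s_i(t_n-s)}$, and the sum over $j=1,\dots,n-1$ is exactly $\Gamma(\alpha)\eta_i^n$. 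For the second term, $t_{n+1}-s$ ranges over $[2k,k]$ decreasing as $s$ runs over $[t_{n-1},t_n]=[(n-1)k,nk]$; evaluating $\int_{t_{n-1}}^{t_n}e^{-s_i(t_{n+1}-s)}\,ds$ by substituting $r=t_{n+1}-s$ gives $\int_{k}^{2k}e^{-s_i r}\,dr=\frac{1}{s_i}(e^{-s_ik}-e^{-2s_ik})$, which yields the stated formula.

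The only subtlety is the base case: the recurrence as written must hold for $n=1$ as well, since $\eta_i^n$ is used for $n\geq 2$ and we want a recursion generating all of them starting from $\eta_i^1=0$. When $n=1$, the claimed right-hand side is $e^{-s_ik}\cdot 0+\frac{1}{s_i\Gamma(\alpha)}(e^{-s_ik}-e^{-2s_ik})b(x_0)$, and one checks directly from the definition that $\eta_i^2=\frac{1}{\Gamma(\alpha)}b(x_0)\int_{t_0}^{t_1}e^{-s_i(t_2-s)}\,ds=\frac{1}{s_i\Gamma(\alpha)}(e^{-s_ik}-e^{-2s_ik})b(x_0)$, so the formula is consistent at the initial step. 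Thus the same computation covers all $n\geq 1$ provided we adopt the convention that the empty sum defining the "first term" for $n=1$ vanishes, matching $\eta_i^1=0$.

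There is essentially no obstacle here — the result is a bookkeeping identity exploiting the semigroup property $e^{-s_i(a+k)}=e^{-s_ik}e^{-s_ia}$ of the exponential kernel, which is precisely the structural feature that makes the SOE approximation lead to a Markovian (memory-free) update. The only place to be careful is matching the index ranges in the definition of $\eta_i^n$ (sum up to $n-1$, not $n$) against the shift $n\mapsto n+1$, so that the "tail" sum telescopes correctly into $e^{-s_ik}\eta_i^n$ and exactly one new term $b(x_{n-1})$ is split off. I would present the two-line change of variables and let the reader verify the base case.
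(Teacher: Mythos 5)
Your proof is correct and is exactly the direct computation the paper alludes to (its proof of Lemma \ref{lmm:etaseq} is the single line ``this is just a consequence of direct computation''): split off the $j=n$ term, use $e^{-s_i(t_{n+1}-s)}=e^{-s_ik}e^{-s_i(t_n-s)}$ to identify the remaining sum with $e^{-s_ik}\eta_i^n$, and evaluate $\int_k^{2k}e^{-s_ir}\,dr$. Your base-case check at $n=1$ is a worthwhile addition that the paper omits.
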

\begin{proof}
This is just a consequence of direct computation.
\end{proof}
 In fact, Lemma \ref{lmm:etaseq} is a consequence of the well-known fact that dynamics with exponentially decaying memory kernels can be made Markovian. With the fast algorithm, we only need
 \[
 O(NM)=O(N (\log N)^2)
 \]
  time to compute $\{\eta_i^n\}$ for a sample path. To make the computational efficiency more transparent, we list some intuitive comparisons of the size of $N$ and $\log(N)$: for example, when $N = 1000$, $(\log N)^2 \approx 47$; when $N = 10000$, $(\log N)^2 \approx 85$. In fact, $M$ used here is a number even smaller than $(\log N)^2$, which is illustrated more specifically in later numerical sections \ref{sec_strongfast}.

\subsection{Stability and convergence of the fast algorithm}

We will use the following fractional ODE as a reference:
\begin{gather}\label{eq:v}
D_c^{\alpha}v=(1+\frac{\epsilon}{\Gamma(\alpha)} T^{1-\alpha})f(v(t)),~v(0)=y_0.
\end{gather}
The solution exists on $[0, T_b)$ where either $T_b=\infty$ or $\lim_{t\to T_b^-}v(t)=+\infty$ by the result in \cite{feng2017}.

We first of all investigate a Volterra type integral equation that is useful for the stability of our scheme.
\begin{lemma}\label{lmm:wellposedmodifiedeq}
Let $y_0\ge 0$ and $f(\cdot)$ is a non-negative locally Lipschitz function. Consider
\begin{gather}\label{eq:modifiedInt}
y(t)=y_0+\int_0^t \gamma(t-s)f(y(s))\,ds.
\end{gather}
Then, there is a unique continuous solution $y(t)$ on $[0, T]\cap [0, T_b)$ with $T_b$ being the blowup time for \eqref{eq:v}. On $[0, T]\cap [0, T_b)$, $y(t)$ is a non-decreasing continuous function, and satisfies
\[
y(t)\le v(t)
\]
where $v(\cdot)$ solves \eqref{eq:v}.
If $f(\cdot)$ is globally Lipschtiz, then $T_b=\infty$ and $y(t)$ exists on $[0, T]$.
\end{lemma}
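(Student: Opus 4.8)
The plan is to follow the standard Picard iteration / fixed-point strategy for Volterra integral equations with weakly singular kernels, while keeping careful track of the comparison with the reference fractional ODE \eqref{eq:v}. First I would observe that the kernel $\gamma$ in \eqref{eq:soekernel} is integrable on $[0,T]$: on $(0,k]$ it is the weakly singular $t^{\alpha-1}/\Gamma(\alpha)$ and on $(k,T]$ it is a bounded sum of exponentials, so $\int_0^T \gamma(s)\,ds<\infty$ and moreover $\int_0^t\gamma(s)\,ds$ is continuous, nondecreasing with value $0$ at $t=0$. Together with the local Lipschitz (hence locally bounded) hypothesis on $f$, a contraction-mapping argument in $C([0,\tau];\mathbb{R})$ for $\tau$ small enough (using that $\sup_{0\le t\le \tau}\int_0^t\gamma(t-s)\,ds\to 0$ as $\tau\to0$) gives a unique local continuous solution; this can then be continued as long as it stays finite. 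Nonnegativity of $y$ is immediate from $y_0\ge 0$, $\gamma\ge0$, $f\ge0$ and the integral representation, and monotonicity follows because $t\mapsto \gamma(t-s)$ and the upper limit both behave monotonically — more precisely, since $\gamma$ need not be monotone one argues via the equivalent differentiated/fractional form, or directly: for $t_1<t_2$, $y(t_2)-y(t_1)=\int_{t_1}^{t_2}\gamma(t_2-s)f(y(s))ds+\int_0^{t_1}(\gamma(t_2-s)-\gamma(t_1-s))f(y(s))ds$, and one shows the second integrand is nonnegative by comparing $\gamma(t_2-s)$ and $\gamma(t_1-s)$ after using that $t^{\alpha-1}$ dominates the SOE tail (this is the place one must be slightly careful because of the jump at $t=k$).

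Next, for the comparison $y(t)\le v(t)$, the idea is to bound $\gamma$ by the pure fractional kernel scaled by a constant. Using Lemma \ref{lmm:soe} with tolerance $\epsilon$ and truncation $\delta=k$, on $(k,T]$ we have $\frac{1}{\Gamma(\alpha)}\sum_i\omega_i e^{-s_i(t)} \le \frac{1}{\Gamma(\alpha)}\big(t^{\alpha-1}+\epsilon\big)$, and since on $(k,T]$ one has $t^{\alpha-1}\le k^{\alpha-1}$ is not quite what we want — rather $\epsilon = \epsilon\cdot t^{1-\alpha}\cdot t^{\alpha-1}\le \epsilon T^{1-\alpha} t^{\alpha-1}$ for $t\le T$, so $\gamma(t)\le \big(1+\frac{\epsilon}{\Gamma(\alpha)}T^{1-\alpha}\big)\frac{t^{\alpha-1}}{\Gamma(\alpha)}$ for $t>k$, and on $(0,k]$ the same bound holds trivially since $\gamma(t)=t^{\alpha-1}/\Gamma(\alpha)\le (1+\cdots)t^{\alpha-1}/\Gamma(\alpha)$. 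Hence $y(t)\le y_0 + (1+\tfrac{\epsilon}{\Gamma(\alpha)}T^{1-\alpha})\,J_\alpha[f(y)](t)$. Since $v$ solves exactly the fractional integral equation $v(t)=y_0+(1+\tfrac{\epsilon}{\Gamma(\alpha)}T^{1-\alpha})J_\alpha[f(v)](t)$ (equivalently \eqref{eq:v} with $v(0)=y_0$), and $f$ is nondecreasing on the relevant range (or one invokes the comparison principle for fractional integral inequalities with nonnegative weakly singular kernel and locally Lipschitz monotone $f$, e.g. the Grönwall-type results of \cite{feng2017}), an induction on Picard iterates — $y^{(0)}=y_0\le v$, and $y^{(m)}\le v^{(m)}=v$ by monotonicity of $f$ and of $J_\alpha$ — yields $y(t)\le v(t)$ on the common interval of existence. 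This simultaneously shows that the solution $y$ cannot blow up before $T_b$, so $y$ exists on all of $[0,T]\cap[0,T_b)$; and if $f$ is globally Lipschitz then $v$ is global so $T_b=\infty$ and $y$ exists on $[0,T]$.

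I expect the main obstacle to be the monotonicity claim for $y$ and, relatedly, making the comparison argument fully rigorous when $f$ is only locally — not globally — Lipschitz and not a priori monotone. The cleanest route is probably to note that on $[0,T_b)$ the reference solution $v$ is bounded on compact subintervals, so on each such subinterval we may replace $f$ by a globally Lipschitz, nondecreasing modification $\tilde f$ agreeing with $f$ on $[0,\max v]$; then all comparison and monotonicity statements follow from standard fractional-calculus comparison principles (the Grönwall inequality and comparison lemma of \cite{feng2017}), and one checks a posteriori that $y$ never leaves the range where $\tilde f=f$. The jump of $\gamma$ at $t=k$ requires only that one keeps $\gamma$ nonnegative and integrable — continuity of $\gamma$ is never actually used — so it poses no real difficulty, but it is worth a sentence to reassure the reader that the contraction argument only needs $\int_0^\tau\gamma\to0$, which holds for this discontinuous $\gamma$.
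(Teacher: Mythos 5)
Your overall architecture (local contraction for uniqueness, Picard iteration plus domination of $\gamma$ by a constant times the fractional kernel for the comparison with $v$) matches the paper's proof, and your comparison step $y\le v$ is essentially identical to the paper's. But your concrete argument for the monotonicity of $y$ has the sign backwards. In the decomposition
\[
y(t_2)-y(t_1)=\int_{t_1}^{t_2}\gamma(t_2-s)f(y(s))\,ds+\int_0^{t_1}\bigl(\gamma(t_2-s)-\gamma(t_1-s)\bigr)f(y(s))\,ds,
\]
you would need $\gamma(t_2-s)\ge\gamma(t_1-s)$ for $s<t_1<t_2$; but $t_2-s>t_1-s$ and the kernel \eqref{eq:soekernel} is (piecewise) \emph{decreasing} — $t^{\alpha-1}$ with $\alpha<1$, and a positive combination of decaying exponentials — so the second integrand is $\le 0$, not $\ge 0$, and no comparison of the algebraic part against the SOE tail rescues this. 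The fix, which is what the paper does implicitly, is to change variables first, $y(t)=y_0+\int_0^t\gamma(u)f(y(t-u))\,du$, so that the kernel is evaluated at the same point and only the solution is shifted:
\[
y(t_2)-y(t_1)=\int_{t_1}^{t_2}\gamma(u)f(y(t_2-u))\,du+\int_0^{t_1}\gamma(u)\bigl(f(y(t_2-u))-f(y(t_1-u))\bigr)\,du,
\]
where the second term is nonnegative provided $f$ is non-decreasing and $y$ is already known to be non-decreasing. The paper avoids the apparent circularity by running this on the Picard iterates: each $y_n$ is shown by induction to be non-decreasing (and $y_{n+1}\ge y_n$), and the monotone pointwise limit $y$ inherits both properties, with the integral equation recovered by monotone convergence.

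This exposes a second soft spot: monotonicity of $f$ is needed not only for the comparison $y\le v$ (which you correctly flag) but also for the monotonicity of $y$ itself, and your suggested workaround — replacing $f$ by a globally Lipschitz \emph{non-decreasing} $\tilde f$ agreeing with $f$ on $[0,\max v]$ — cannot exist unless $f$ is already non-decreasing on that interval. In fairness, the lemma's statement omits this hypothesis while the paper's own proof uses $f(y_n(s))\le f(v(s))$, and the hypothesis is explicitly present where the lemma is applied (Lemma \ref{lmm:discretecp}); the honest resolution is to add ``non-decreasing'' to the hypotheses rather than to manufacture $\tilde f$. The remaining pieces of your proposal — integrability of the discontinuous kernel sufficing for the contraction, nonnegativity of $y$, and globalness of the solution when $f$ is globally Lipschitz — are correct and agree with the paper.
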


The proof of this lemma is standard. For the readers' convenience, we give one proof in Appendix \ref{app:modifiedeq}.

The following lemma gives the stability of the fast algorithm:
\begin{lemma}\label{lmm:discretecp}
Let $y_0\ge 0$. Assume $f(v)$ is a non-negative, non-decreasing, Lipschitz function on $[0,\infty)$. Let $y(t)$ be the solution to \eqref{eq:modifiedInt} on $[0, T]$.
For a given sequence $z=\{z_m\}$, define
\begin{gather}
T_n(z,y_0)=y_0+\sum_{j=1}^nf(z_{j-1})\int_{t_{j-1}}^{t_j}\gamma(t_n-s)\,ds.
\end{gather}

\begin{enumerate}
\item Assume that $a=\{a_n\}$ solves the induction relation $a_n=T_n(a, y_0)$.
 Then, $\{a_n\}$ is non-negative, non-decreasing and $a_n\le y(t_n)$.
In particular, if $f(v)=Lv$, we have
\[
a_n\le y_0 E_{\alpha}\left(L(1+\frac{\epsilon}{\Gamma(\alpha)})t_n^{\alpha}\right)\le C(T,H,\alpha)y_0,~\forall \epsilon<1,
\]
where $E_{\alpha}(\cdot)$ is the Mittag-Leffler function defined in \eqref{eq:mlfunc}.

\item If a non-negative sequence $c=\{c_n\}$ satisfies $c_n\le T_n(c, y_0)$, then
\[
c_n\le a_n\le y(t_n).
\]
\end{enumerate}

\end{lemma}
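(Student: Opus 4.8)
The plan is to establish the two parts of Lemma~\ref{lmm:discretecp} by a straightforward induction in $n$, exploiting monotonicity of $f$ and the nonnegativity of the kernel $\gamma$, and then invoke the continuous comparison bound of Lemma~\ref{lmm:wellposedmodifiedeq} to pass from the discrete sequence to the continuous solution and ultimately to the closed-form Mittag-Leffler estimate.

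First I would prove part~(1). The existence and nonnegativity of $\{a_n\}$ is immediate: $a_0 = y_0 \ge 0$, and since $\gamma \ge 0$, $f \ge 0$, the recursion $a_n = T_n(a,y_0)$ expresses $a_n$ as $y_0$ plus a sum of nonnegative terms, so $a_n \ge 0$ for all $n$ by induction. Monotonicity $a_{n} \le a_{n+1}$ requires a little more care because both the summation range and the weights $\int_{t_{j-1}}^{t_j}\gamma(t_n-s)\,ds$ change with $n$; the clean way is to compare $a_{n+1} = y_0 + \sum_{j=1}^{n+1} f(a_{j-1})\int_{t_{j-1}}^{t_j}\gamma(t_{n+1}-s)\,ds$ term by term with $a_n$, using that the kernel weights at level $n+1$ dominate those at level $n$ in an appropriate aggregated sense — this is exactly the discrete analogue of the fact that $J_\alpha$ applied to a nonnegative function is nondecreasing, and it is cleanest to deduce it from part~(2) together with the observation that the shifted sequence $a'_n := a_{n+1}$ satisfies $a'_n \ge T_n(a', y_0)$. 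For the comparison $a_n \le y(t_n)$, I would again induct: assuming $a_{j-1} \le y(t_{j-1})$ for $j \le n$ and using that $f$ is nondecreasing, we get $f(a_{j-1}) \le f(y(t_{j-1})) \le f(y(s))$ for $s \in [t_{j-1},t_j)$ (here the monotonicity of $y$ from Lemma~\ref{lmm:wellposedmodifiedeq} is used), hence
\[
a_n = y_0 + \sum_{j=1}^n f(a_{j-1})\int_{t_{j-1}}^{t_j}\gamma(t_n-s)\,ds \le y_0 + \int_0^{t_n}\gamma(t_n-s)f(y(s))\,ds = y(t_n).
\]

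For part~(2), I would run the parallel induction: if $c_{j-1} \le a_{j-1}$ for all $j \le n$, then monotonicity of $f$ gives $f(c_{j-1}) \le f(a_{j-1})$, so $c_n \le T_n(c,y_0) \le T_n(a,y_0) = a_n$, and then $c_n \le a_n \le y(t_n)$ from part~(1). Finally, in the linear case $f(v) = Lv$, Lemma~\ref{lmm:wellposedmodifiedeq} identifies the dominating continuous equation \eqref{eq:v} as $D_c^\alpha v = L(1 + \frac{\epsilon}{\Gamma(\alpha)}T^{1-\alpha})v$, $v(0)=y_0$, whose solution — by the Mittag-Leffler representation recalled in the preliminaries — is $v(t) = y_0 E_\alpha\!\big(L(1+\frac{\epsilon}{\Gamma(\alpha)}T^{1-\alpha})t^\alpha\big)$; since $T^{1-\alpha}$ is absorbed into the constant and $\epsilon < 1$, evaluating at $t = t_n \le T$ and bounding $E_\alpha$ on the compact argument range gives $a_n \le C(T,H,\alpha)y_0$.

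The main obstacle is the monotonicity claim $a_n \le a_{n+1}$ (and the accompanying step $y_0 + \sum_{j=1}^n f(a_{j-1})\int_{t_{j-1}}^{t_j}\gamma(t_n-s)\,ds \le y_0 + \int_0^{t_n}\gamma(t_n-s)f(y(s))\,ds$), because the kernel $\gamma$ in \eqref{eq:soekernel} is only piecewise defined, discontinuous at $t = k$, and not itself monotone — so the usual "replace the sum by an integral of a larger integrand" argument must be justified carefully using only $\gamma \ge 0$, integrability, and the piecewise-constant-in-$j$ structure of $f(a_{j-1})$ together with the fact that on $[t_{j-1},t_j)$ we have $f(a_{j-1}) \le f(y(s))$. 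Once the bookkeeping of the kernel weights is set up correctly (which is why I would derive monotonicity of $\{a_n\}$ from part~(2) rather than directly), the rest is routine induction.
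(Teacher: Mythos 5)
Your proposal is correct and follows essentially the same route as the paper: induction using the monotonicity of $f$ and of $y$ to get nonnegativity and the bound $a_n\le y(t_n)$ (via $f(a_{j-1})\le f(y(t_{j-1}))\le f(y(s))$ on $[t_{j-1},t_j)$), the translation identity $\int_{t_{j-1}}^{t_j}\gamma(t_n-s)\,ds=\int_{t_j}^{t_{j+1}}\gamma(t_{n+1}-s)\,ds$ underlying the monotonicity of $\{a_n\}$, the same one-line induction for part (2), and Lemma \ref{lmm:wellposedmodifiedeq} together with the Mittag--Leffler representation for the linear case. The one caveat is your shortcut for $a_n\le a_{n+1}$: the shifted sequence $a'_n=a_{n+1}$ satisfies $a'_n\ge T_n(a',y_0)$, i.e.\ it is a \emph{supersolution}, so part (2) as stated (a subsolution comparison) does not apply directly and you need its mirror version (proved by the identical induction); the paper avoids this by carrying out the termwise shift comparison directly in \eqref{eq:tmp1}--\eqref{eq:tmp2}.
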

\begin{proof}

For (1),  the claims follow by induction.

Indeed, $a_0=y_0=y(0)\ge 0$. Then,
\[
0\le a_1=y_0+\frac{f(y_0)}{\Gamma(\alpha)}\int_0^k(k-s)^{\alpha-1}\,ds
\le y_0+\frac{1}{\Gamma(\alpha)}\int_0^k(k-s)^{\alpha-1}f(y(s))\,ds=y(t_1)
\]
by the monotonicity of $f$ and $y$. Assume that for $m\le n-1, n\ge 2$, we have $0\le a_m\le y(t_m)$.  By the monotonicity of $f$ and $y$, $a_n=T_n(a, y_0)\ge 0$ is trivial. Moreover,
\begin{gather*}
a_n =T_n(a, y_0)\le y_0+\sum_{j=1}^nf(a_{j-1})\int_{t_{j-1}}^{t_j}\gamma(t_n-s)\,ds\le y_0+\sum_{j=1}^n\int_{t_{j-1}}^{t_j}\gamma(t_n-s)f(y(s))\,ds=y(t_n).
\end{gather*}
Now, we show the monotonicity of $\{a_n\}$. $a_1\ge a_0=y_0$ is clear. For $n\ge 2$, we have:
\begin{gather}\label{eq:tmp1}
\frac{f(a_{n-1})}{\Gamma(\alpha)} \int_{t_{n-1}}^{t_n}(t_n-s)^{\alpha-1}\,ds
 \ge \frac{f(a_{n-2})}{\Gamma(\alpha)}\int_{t_{n-1}}^{t_n}(t_n-s)^{\alpha-1}\,ds=\frac{f(a_{n-2})}{\Gamma(\alpha)}\int_{t_{n-2}}^{t_{n-1}}(t_{n-1}-s)^{\alpha-1}\,ds.
\end{gather}
Similarly,
\begin{align}\label{eq:tmp2}
f(a_{j-1})\int_{t_{j-1}}^{t_{j}}\gamma(t_n-s)\,ds
 \ge
 \begin{cases}
 f(a_{j-2})\int_{t_{j-2}}^{t_{j-1}}\gamma(t_n-s)\,ds,& j\ge 2,\\
 0,& j=1.
 \end{cases}
\end{align}
Relations \eqref{eq:tmp1}-\eqref{eq:tmp2} then give $a_n\ge a_{n-1}$.

If $f(v)=Lv$, we have by Lemma \ref{lmm:wellposedmodifiedeq} that
\[
a_n\le y(t_n)\le v(t_n)=y_0 E_{\alpha}(L(1+\frac{\epsilon}{\Gamma(\alpha)})t_n^{\alpha}).
\]

Claim (2) is straightforward by induction. To see this, we note that if we have $c_m\le a_m$ for all $m\le n-1$, then
\[
T_n(c, y_0)\le T_n(a, y_0),
\]
which is just $c_n\le a_n$.
\end{proof}

Now, we have the convergence of the fast algorithm:
\begin{theorem} \label{thm:fastmethoderr}
 Consider the fast scheme \eqref{eq:fastscheme}.
\begin{enumerate}
\item Assume the SOE approximation as in Lemma \ref{lmm:soe} is applied with tolerance $\eps>0$, then the strong error satisfies
$\|x_n-x(t_n)\|\le C(T)( k^{H+\alpha-1}+\epsilon)$.
In particular, if we set the tolerance $\epsilon=k^{H+\alpha-1}$, then sampling a path needs time complexity
$\mathcal{C}=O(N(\log N)^2+N_G)$,
 and we have the following strong error bound
\begin{gather}
\sup_{n\le T/k}\|x_n-x(t_n)\|\le C(T) k^{H+\alpha-1}.
\end{gather}

\item In the case $\alpha=2-2H$ and $b(x)=Bx$, if we choose, $\epsilon=k^{\min(3/2-H, 3-3H)}$, then the time complexity for sampling a path is $\mathcal{C}=O(N(\log N)^2+N_G)$ and the strong error is controlled as
\begin{gather}
\sup_{n\le T/k}\|x_n-x(t_n)\|\le
\begin{cases}
C(T,H)k^{3-3H}, & H\in (3/4, 1) \\
C(T,H)\sqrt{|\ln k|} k^{3/4}, & H=3/4,\\
C(T,H) k^{3/2-H}, & H\in (1/2, 3/4).
\end{cases}
\end{gather}
\end{enumerate}
\end{theorem}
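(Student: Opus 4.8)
The plan is to reduce the convergence of the fast scheme \eqref{eq:fastscheme} to the already-established convergence of the direct scheme \eqref{eq:scheme}, using the SOE approximation error together with the discrete stability estimate of Lemma \ref{lmm:discretecp}. Write $x_n^{\mathrm{fast}}$ for the output of \eqref{eq:fastscheme} and $x_n^{\mathrm{dir}}$ for the output of \eqref{eq:scheme}, both driven by the same sample path of $G$. Since Proposition \ref{pro:directmethoderr} (resp. Theorem \ref{thm:directmethoderr}) already controls $\|x_n^{\mathrm{dir}}-x(t_n)\|$, it suffices by the triangle inequality to bound $e_n:=\|x_n^{\mathrm{fast}}-x_n^{\mathrm{dir}}\|$ by the same order as the claimed bound, i.e. by $C(T)(k^{H+\alpha-1}+\epsilon)$ in case (1) and by $C(T,H)\tilde R(T,k,H)$ in case (2) once $\epsilon$ is chosen as stated. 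Then part (1) follows with $\epsilon = k^{H+\alpha-1}$ and part (2) with $\epsilon = k^{\min(3/2-H,3-3H)}$; the complexity counts $\mathcal{C}=O(N(\log N)^2+N_G)$ come directly from Lemma \ref{lmm:soe} with $\delta=k$ (so $M=O((\log N)^2)$) plus the recursion in Lemma \ref{lmm:etaseq}, which makes the update of each $\eta_i^n$ cost $O(1)$, plus the cost $N_G$ of sampling $G$.

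The key step is the bound on $e_n$. Subtracting \eqref{eq:scheme} from \eqref{eq:newscheme}, the $G(t_n)$ terms cancel, and the difference is
\[
x_n^{\mathrm{fast}}-x_n^{\mathrm{dir}}
=\sum_{j=1}^n\Big(b(x_{j-1}^{\mathrm{fast}})\int_{t_{j-1}}^{t_j}\gamma(t_n-s)\,ds
-b(x_{j-1}^{\mathrm{dir}})\frac{1}{\Gamma(\alpha)}\int_{t_{j-1}}^{t_j}(t_n-s)^{\alpha-1}\,ds\Big).
\]
Split this, for each $j$, into a kernel-error part $b(x_{j-1}^{\mathrm{fast}})\int_{t_{j-1}}^{t_j}(\gamma(t_n-s)-\frac{1}{\Gamma(\alpha)}(t_n-s)^{\alpha-1})\,ds$ and a solution-error part $\frac{1}{\Gamma(\alpha)}(b(x_{j-1}^{\mathrm{fast}})-b(x_{j-1}^{\mathrm{dir}}))\int_{t_{j-1}}^{t_j}(t_n-s)^{\alpha-1}\,ds$. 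For the kernel-error part, the $t>k$ portion of $\gamma$ is within $\epsilon/\Gamma(\alpha)$ of $(t_n-s)^{\alpha-1}/\Gamma(\alpha)$ by Lemma \ref{lmm:soe}, while for $t_n-s\le k$ (only the last subinterval) the two kernels coincide; combined with $\sup_s\|b(x_s^{\mathrm{fast}})\|<\infty$ — which follows from a moment bound on $x_n^{\mathrm{fast}}$ obtained by taking norms in \eqref{eq:newscheme}, using Lipschitz $b$, $|\gamma|\le \frac{1}{\Gamma(\alpha)}(t_n-s)^{\alpha-1}$ on $(0,k]$ and integrability of the SOE tail, and applying the discrete Grönwall-type bound from Lemma \ref{lmm:discretecp}(1) with $f(v)=Lv+$const — this contributes at most $C(T)\epsilon$. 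For the solution-error part, Lipschitz continuity of $b$ gives $\le \frac{L}{\Gamma(\alpha)}\sum_{j=1}^n e_{j-1}\int_{t_{j-1}}^{t_j}(t_n-s)^{\alpha-1}\,ds$. Hence $e_n\le C(T)\epsilon+\frac{L}{\Gamma(\alpha)}\sum_{j=1}^n e_{j-1}\int_{t_{j-1}}^{t_j}(t_n-s)^{\alpha-1}\,ds$, and the discrete fractional Grönwall inequality \cite[Lemma 6.1]{feng2017} (compare with the reference fractional ODE $D_c^\alpha u=Lu$, $u(0)=C(T)\epsilon$) yields $e_n\le C(T)\epsilon$ uniformly in $n\le T/k$.

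For part (2) the same decomposition applies verbatim with $b(x)=Bx$, so the bound $e_n\le C(T,H)\epsilon$ holds with the sharper choice $\epsilon=k^{\min(3/2-H,3-3H)}$; note this choice is exactly calibrated so that $\epsilon$ matches $\tilde R(T,k,H)$ up to the logarithmic factor at $H=3/4$, and combining $e_n\le C(T,H)\epsilon$ with the direct-scheme bound of Theorem \ref{thm:directmethoderr} via the triangle inequality gives the stated piecewise bound. The one subtlety is that Lemma \ref{lmm:discretecp}, and hence the a priori moment bound on $x_n^{\mathrm{fast}}$, is stated for scalar non-negative, non-decreasing Lipschitz $f$; applying it to control $\|x_n^{\mathrm{fast}}\|$ requires running the comparison on the scalar sequence $z_n=\|x_n^{\mathrm{fast}}-x_0\|$ (or $\|x_n^{\mathrm{fast}}\|+|G|$-type majorants) against $f(v)=Lv+L\|x_0\|+\sup_n\|G(t_n)\|$, which is legitimate because $\gamma\ge 0$ and $\int_{t_{j-1}}^{t_j}\gamma(t_n-s)\,ds$ is decreasing in $j$ exactly as used in the proof of Lemma \ref{lmm:discretecp}. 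I expect this bookkeeping — verifying that the fast-scheme iterates stay uniformly bounded in $L^2(\Omega)$ so that $\sup_j\|b(x_j^{\mathrm{fast}})\|<\infty$ — to be the main technical obstacle; everything after that is a routine fractional Grönwall argument identical in structure to the proofs of Proposition \ref{pro:directmethoderr} and Theorem \ref{thm:directmethoderr}.
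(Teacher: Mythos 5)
Your proof is correct, but it routes the argument differently from the paper. You compare the fast iterates to the direct iterates and then invoke Proposition \ref{pro:directmethoderr} and Theorem \ref{thm:directmethoderr} via the triangle inequality, so the SOE error enters only through the difference $e_n=\|x_n^{\mathrm{fast}}-x_n^{\mathrm{dir}}\|$, which you close with a fractional Gr\"onwall inequality for the power-law kernel. The paper instead compares the fast iterates directly to the exact solution: it introduces the residual $r(t)=x(t)-\bigl(x_0+\int_0^t\gamma(t-s)b(x(s))\,ds+G(t)\bigr)$ of the exact solution in the $\gamma$-kernel equation, shows $\sup_t\|r(t)\|\le C(T)\epsilon$, derives the recursion $E_n\le L\sum_j E_{j-1}\int_{t_{j-1}}^{t_j}\gamma(t_n-s)\,ds+\sup_n\|R_n\|+C(T)\epsilon$ with the same consistency term $R_n$ from \eqref{eq:Rn} as in the direct-scheme proofs, and closes it with the comparison principle of Lemma \ref{lmm:discretecp} applied to the kernel $\gamma$ itself. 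The practical difference is where an a priori moment bound is needed: the paper only needs $\sup_s\|b(x(s))\|<\infty$ for the exact solution, which is already available from Lemma \ref{Ex^2}, whereas your kernel-error term multiplies $b(x_{j-1}^{\mathrm{fast}})$ and therefore forces you to first prove a uniform $L^2(\Omega)$ bound on the fast iterates. You correctly flag this as the main extra step, and your sketch of it --- majorizing $\|x_n^{\mathrm{fast}}-x_0\|$ by the comparison sequence of Lemma \ref{lmm:discretecp} with $f(v)=Lv+\|b(x_0)\|$ and $y_0=\sup_n\|G(t_n)\|$ --- is legitimate, since $\gamma\ge 0$ and this $f$ is non-negative, non-decreasing and globally Lipschitz. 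Your route is more modular (it black-boxes the direct-scheme theorems entirely) at the cost of this extra stability estimate; the paper's route avoids that estimate but repeats the $R_n$ bookkeeping inside the fast-scheme proof. Both yield the stated rates and the identical complexity count.
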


\begin{proof}
The complexity part is easy and we omit. We now focus on the error estimates.

Let
\begin{gather}
r(t):=x(t)-\left(x_0+\int_0^t\gamma(t-s)b(x(s))\,ds + G(t)\right).
\end{gather}
Then, we have $r(t)=0$ for $t\in [0, k]$ and $|r(t)|\le \epsilon\int_0^t |b(x(s))|\,ds,~t\in [k, T]$ by SOE approximation.
It follows that
\[
\sup_{t\in [0, T]}\|r(t)\|\le C_1(T)\epsilon
\]
It then follows from the definition of $r(t)$ that
\begin{multline*}
x(t_n)-x_n
=\sum_{j=1}^n \int_{t_{j-1}}^{t_j}\gamma(t_n-s)(b(x(s))-b(x_{j-1}))\,ds+r(t_n)\\
=\sum_{j=1}^n \int_{t_{j-1}}^{t_j}\gamma(t_n-s)(b(x(t_{j-1}))-b(x_{j-1}))\,ds
- R_n +r_1(t_n),
\end{multline*}
where $R_n$ is defined as in \eqref{eq:Rn} and
\begin{gather}
r_1(t_n)=r(t_n)+\sum_{j=1}^n \int_{t_{j-1}}^{t_j}\left(\gamma(t_n-s)-\frac{1}{\Gamma(\alpha)}(t_n-s)^{\alpha-1}\right)\left(b(x(s))-b(x(t_{j-1}))\right)\,ds.
\end{gather}
By the SOE approximation, we again have
\[
\sup_{n\le T/k}\|r_1(t_n)\|\le C(T)\epsilon.
\]

We define $E_n=\|x(t_n)-x_n\|$. We then have
\begin{gather}
E_n\le \sum_{j=1}^n LE_{j-1}\int_{t_{j-1}}^{t_j}\gamma(t_n-s)\,ds
+\sup_{n\le T/k}\|R_n\|+\sup_{t\in [0, T]}\|r_1(t)\|.
\end{gather}

Applying Lemma \ref{lmm:discretecp} for
\[
y_0=\sup_{n\le T/k}\|R_n\|+\sup_{t\in [0, T]}\|r_1(t)\|
\]
 and $f(v)=Lv$ we have for $\epsilon<1$,
 \[
 \sup_{n\le T/k}E_n\le C(T,H,\alpha)(\sup_{n\le T/k}\|R_n\|+\epsilon).
 \]
 The result follows by the estimates of $R_n$ in the proof of Proposition \ref{pro:directmethoderr} and Theorem \ref{thm:directmethoderr}.
\end{proof}

\section{sampling fractional Brownian motion and process $G$}

To give a complete description of the numerical scheme, we must understand how to sample fractional Brownian motion and the process $G(t)$.

Since fractional Brownian motion is a Gaussian process. Using the covariant matrices, one can transform the standard Brownian motion into the desired Gaussian process. {}

For fractional Brownian motions, making use the time homogenuity, one has a fast algorithm to sample fractional Brownian motion. We use the circulant method (or Wood-Chan algorithm) (see \cite{shevchenko2015} for example).
The idea is to sample
\[
\xi_n=B_H(t_n)-B_H(t_{n-1}).
\]
By the self-similarity,
\[
\xi_n\myeq k^H(B_H(n)-B_H(n-1)).
\]
Letting $\zeta_n=B_H(n)-B_H(n-1)$, $(\zeta_n)$ forms a Gaussian sequence whose covariance matrix satisfies
\begin{gather}
\Sigma_{ij}=Cov(\zeta_i,\zeta_j)=\rho_H(|i-j|)
\end{gather}
for some function $\rho_H$. This structure allows us to
construct a circulant matrix $M$ of size $2(N-1)\times 2(N-1)$ such that
$\Sigma=M(1:N, 1:N)$.
Then, one is able to take the square root of $M$ using fast Fourier transform (FFT). With the square root of $M$, it is straightforward to transform the standard multivariable normal variables to a Gaussian sequence with covariant matrix $M$. The complexity is $O(N\log N)$. The first $N$ elements will be a sample for the sequence $(\zeta_n)$.
For the details, one can refer to \cite[section 6]{shevchenko2015}.

As stated in \cite{liliulu2017}, the case $\alpha=2-2H$ is the physical case. As mentioned above, if $\alpha=2-2H$,
\[
G(t)\myeq \beta_H B_{1-H}
\]
 is a fractional Brownian motion up to a factor, so $G(t)$ can be sampled directly using the method here. This is a key observation for the simulation of overdamped GLE with fractional noise. Hence we conclude
 \begin{theorem}
For the overdamped GLE with fractional noise (i.e. $\alpha=2-2H$, $\sigma=\sqrt{2}/\Gamma(2H+1)$), we can sample $G(t)$ with complexity
 \begin{gather}
 N_G=O(N\log N).
 \end{gather}
 Consequently, the total complexity of the direct scheme \eqref{eq:scheme} is
 \begin{gather}
 \mathcal{C}=O(N^2),
 \end{gather}
 and the total complexity of our fast algorithm \eqref{eq:fastscheme} is
 \begin{gather}
 \mathcal{C}=O(N(\log N)^2)
 \end{gather}
 \end{theorem}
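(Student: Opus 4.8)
The final statement assembles three ingredients that have all been established earlier, so the plan is essentially a bookkeeping argument. First I would record the complexity of sampling $G(t)$. Under the physical condition $\alpha = 2-2H$, Lemma \ref{eq:processG} gives the crucial distributional identity $G(t) \myeq \beta_H B_{1-H}(t)$, which reduces the task of sampling $G$ to sampling a fractional Brownian motion with Hurst parameter $1-H \in (0,1/2)$, up to the deterministic constant $\beta_H$. The circulant (Wood--Chan) embedding described in the previous section produces the increment sequence of a fractional Brownian motion on a uniform grid of $N$ points in $O(N\log N)$ operations via FFT, so $N_G = O(N\log N)$; multiplying by $\beta_H$ costs only $O(N)$. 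This gives the first displayed formula.

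Next I would substitute this bound into the complexity statements already proved for the two schemes. Proposition \ref{pro:directmethoderr} (and Theorem \ref{thm:directmethoderr}) state that the direct scheme \eqref{eq:scheme} has cost $\mathcal{C} = O(N^2 + N_G)$; since $N_G = O(N\log N) = O(N^2)$, the total is $\mathcal{C} = O(N^2)$. Theorem \ref{thm:fastmethoderr} states that the fast scheme \eqref{eq:fastscheme}, with the SOE tolerance chosen as a suitable power of $k$, has cost $O(N(\log N)^2 + N_G)$ for sampling one path; again $N_G = O(N\log N)$ is dominated by $O(N(\log N)^2)$, so the total complexity is $O(N(\log N)^2)$. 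Each of these is a one-line comparison of asymptotic orders.

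The only point requiring any genuine care --- and hence the ``main obstacle'', though it is mild --- is making sure the circulant embedding argument is legitimate for $H' := 1-H \in (0,1/2)$ rather than for a generic Hurst parameter. The Wood--Chan method requires that the circulant extension of the covariance matrix of the stationary increment sequence be positive semidefinite; this is known to hold for all $H' \in (0,1)$ for fractional Brownian motion, so nothing goes wrong, but I would cite \cite{shevchenko2015} (or note the nonnegativity of the relevant spectral density) to justify it rather than leave it implicit. I would also remark that the constant $\beta_H = \sqrt{2}/\sqrt{\Gamma(3-2H)}$ and the rescaling via self-similarity, $\xi_n \myeq k^{H'}(B_{H'}(n) - B_{H'}(n-1))$, introduce no asymptotic overhead. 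With these observations in place the theorem follows immediately, and I would keep the write-up to a short paragraph invoking Lemma \ref{eq:processG}, the circulant-method discussion, Proposition \ref{pro:directmethoderr}, and Theorem \ref{thm:fastmethoderr}.
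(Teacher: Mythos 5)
Your proposal matches the paper's own argument: the paper likewise invokes the identity $G(t)\myeq \beta_H B_{1-H}$ from Lemma \ref{eq:processG} to reduce sampling $G$ to sampling an fBm via the circulant (Wood--Chan) method at cost $O(N\log N)$, and then substitutes $N_G=O(N\log N)$ into the complexity bounds $O(N^2+N_G)$ and $O(N(\log N)^2+N_G)$ from Proposition \ref{pro:directmethoderr} and Theorem \ref{thm:fastmethoderr}. Your extra remark on the positive semidefiniteness of the circulant embedding is a welcome point of care that the paper leaves implicit.
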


For general $\alpha$, the FSDE model is not physical, but may be used in other situations. The covariance matrix of $G(t)$ has been given in \cite{liliulu2017}, for which the above trick fails, so that we do not have fast algorithms for sampling $G(t)$.
Another option is to consider the following equivalent form of the FSDE:
\begin{gather}\label{eq:integral}
J_{1-\alpha}(x-x_0)(t)=\frac{1}{\Gamma(1-\alpha)}\int_0^t(t-s)^{-\alpha}(x(s)-x_0)\,ds=\int_0^tb(x(s))\,ds+\sigma B_H(t)
\end{gather}
This is like integrate the differential form formally. It is known that a discretization of $J_{1-\alpha}(x-x_0)(t_n)-J_{1-\alpha}(x-x_0)(t_{n-1})$ is the $L^1$ scheme \cite{linxu2007,nochetto2016}. Hence, a possible numerical scheme is
\begin{gather}
(\mathcal{D}^{\alpha}x)_n=b(x_{n-1})+\frac{\sigma}{k} \xi_n,
\end{gather}
where $\mathcal{D}^{\alpha}$ refers to the $L^1$ scheme in \cite{linxu2007}. This numerical scheme is like a Euler scheme for the differential form \eqref{fle}. For this scheme, though we can sample $\xi_n$ fast, we do not have a fast algorithm for the $L^1$ scheme. Moreover, proving the convergence of this scheme is challenging.
Hence, developing a fast algorithm for the general $\alpha$ case is left for future.

\section{Numerical study of the overdamped GLE}

As we have mentioned,  the overdamped GLE with fractional noise is equation \eqref{eq:fsde1} with
\[
\alpha=2-2H,~~\sigma=\frac{\sqrt{2}}{\sqrt{\Gamma(2H+1)}}.
\]
We aim to study the ergodicity of the overdamped GLE
\[
D^{2-2H}x=-\nabla V(x)+\sqrt{\frac{2}{\Gamma(2H+1)}}\dot{B}_H.
\]

\subsection{Example 1 (harmonic potential)}

In this subsection, we aim to validate the order of strong convergence and study numerically the weak convergence of the schemes.
The example we use is in the 1D case and
\[
\nabla V(x)=x.
\]
In \cite{liliulu2017}, the formula of the exact solution in terms of $G(t)$ is given:
\begin{gather}
x(t)=x_0e_{\alpha,1}(t)+G(t)+\int_0^tG(t-s)\dot{e}_{\alpha,1}(s)\,ds.
\end{gather}
Given a sample of $G(t)$, the integral here can be evaluated numerically with small time steps:
\[
\int_0^tG(t-s)\dot{e}_{\alpha,1}(s)\,ds \approx \sum_i G(t-t_{i})(e_{\alpha,1}(t_{i})-e_{\alpha,1}(t_{i-1}))
\]
as the reference solution. For strong convergence, we must use the same sample of $G$, hence, we obtain $\xi_n^{m}$ for the small step $k_m$. Then, for $k_{m-1}=2k_m$,
\[
\xi_n^{m-1}=\xi_{2n-1}^m+\xi_{2n}^m.
\]
($k_m$ should be small so that the error from numerical integral is much smaller than the error from the scheme.)
\subsubsection{The strong convergence of the direct solver} \label{sec_ex1_strong} We first test the strong convergence of the direct solver.
The time step for the reference solution is chosen as $k_m = 2^{-14}$, and $x_0 =1$. The strong order of convergence is tested for various $H = 0.5, 0.55, 0.6, 0.65, 0.7, 0.73, 0.75, 0.8, 0.84$ over 20000 sample paths. Note that $H= 0.5$ is the memoryless case with the normal Brownian motion. In Fig. \ref{ex1_fig1}, we first plot the strong error of the solution
\[
\mathbb{E} \lvert x_n - x(t_n) \rvert^2
\]
in terms of different time steps for some values of $H$ to get a sense of convergence. Here cases of $H = 0.8, 0.6$ are plotted, respectively: (1) in the case of $H = 0.8$, the strong order reads $\min \{\frac{3}{2}-H, 3-3H\} = 0.6$. As can be seen in Fig. \ref{ex1_fig1}, the slope is approximately 1.1718, and hence the convergence order is 0.586. (2) similarly in the case $H = 0.6$, the convergence rate numerically reads 0.945, which approximately matches the analytical results $\min \{\frac{3}{2}-H, 3-3H\} = 0.9$. Then Fig. \ref{ex1_fig2} shows the plot of convergence rate from our numerical results in terms of $H$, which matches the order proved in Theorem \ref{thm:directmethoderr}.

\begin{figure}[htbp]
\centering
\subfloat[$H = 0.8$]{\includegraphics[width=0.5\textwidth]{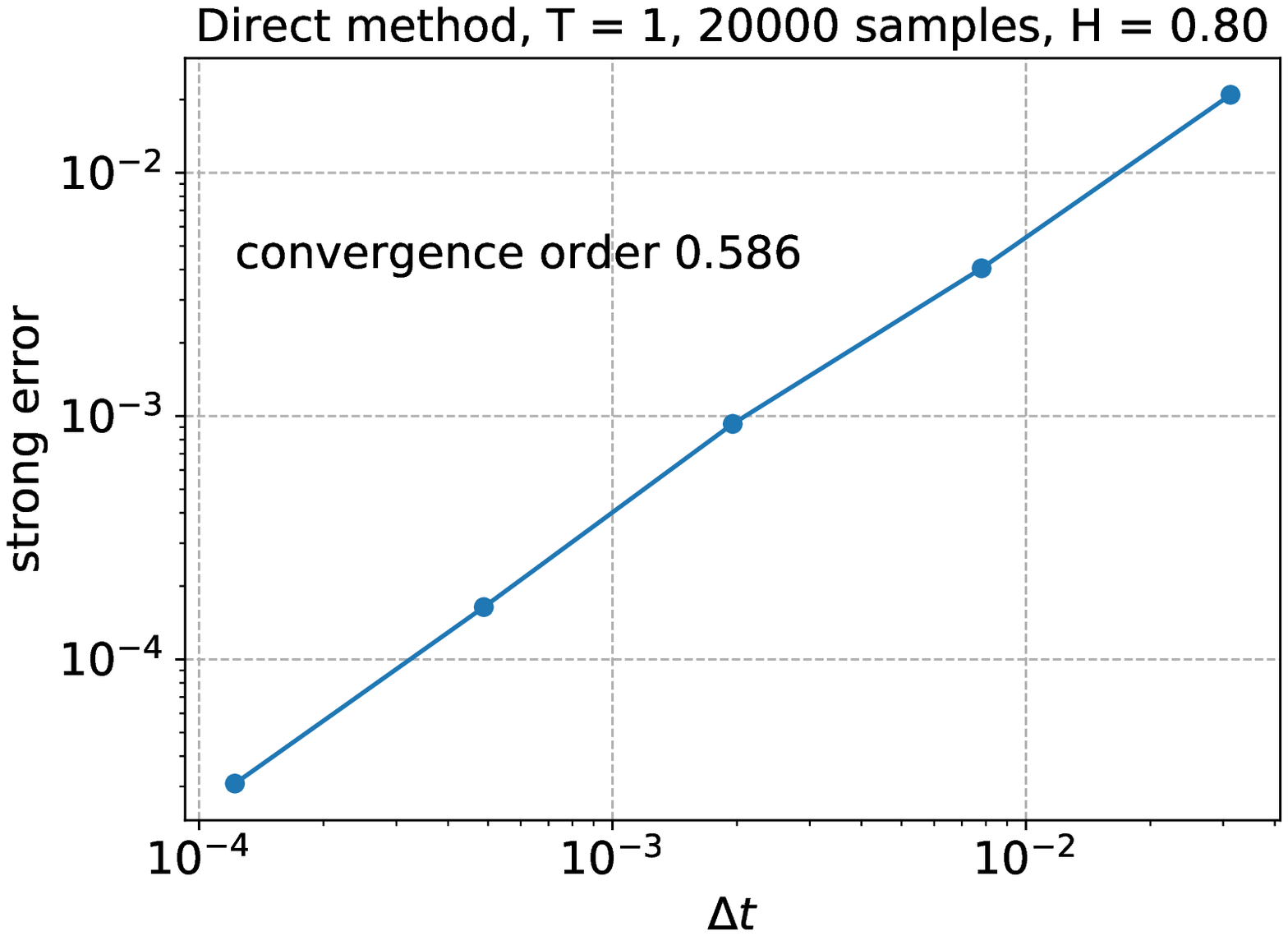} }
\subfloat[$H = 0.6 $]{\includegraphics[width=0.5\textwidth]{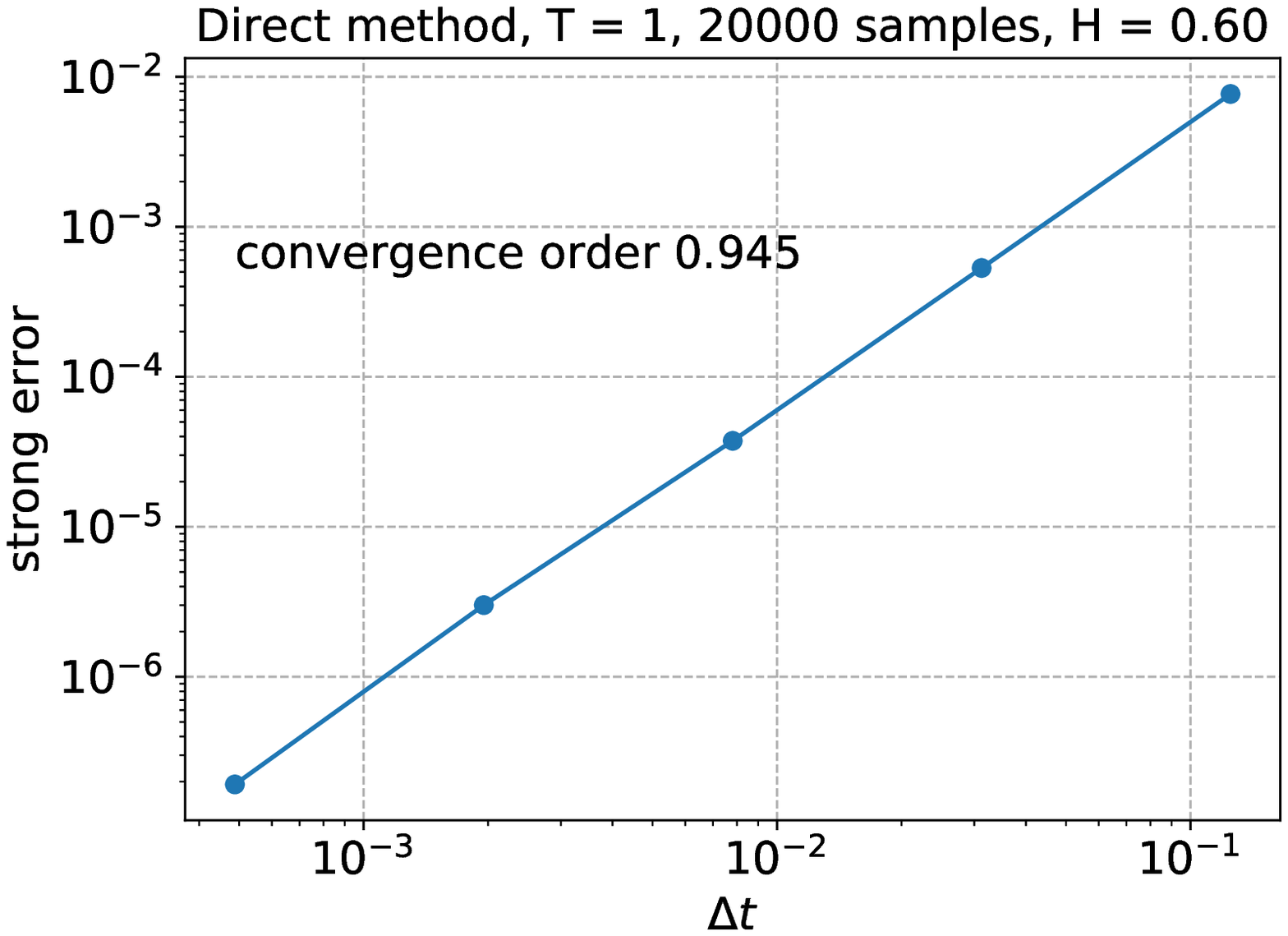} }
\caption{Example 1 (Section \ref{sec_ex1_strong}): The strong convergence of the direct method for $H = 0.8$ and $H = 0.6$, respectively in terms of various $\Delta t$ in log-log scale. The convergence orders match the theoretical result $\min \{\frac{3}{2}-H, 3-3H\}$, as is proved in Theorem \ref{thm:directmethoderr}.}
\label{ex1_fig1}
\end{figure}

\begin{figure}[htbp]
\centering
\includegraphics[width=0.7\textwidth]{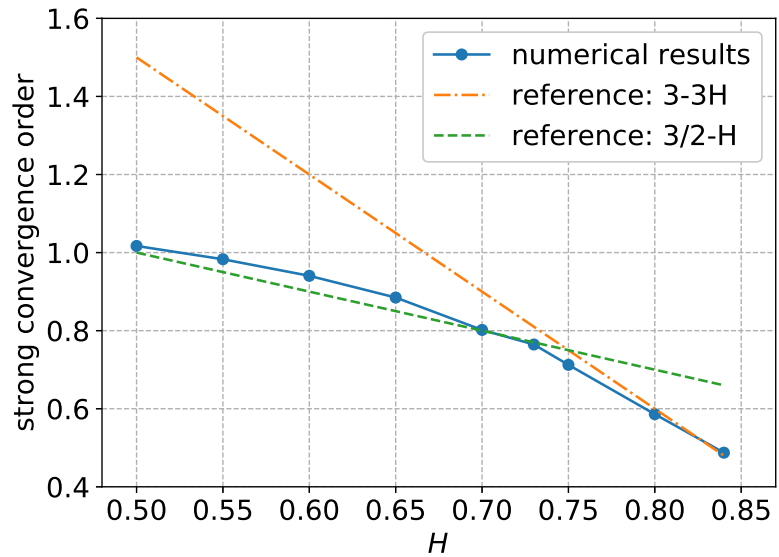}

\caption{Example 1 (Section \ref{sec_ex1_strong}): The strong convergence of the direct method for $H = 0.5, 0.55, 0.6, 0.65, 0.7, 0.73, 0.75, 0.8, 0.84$ computed over $20000$ sample paths in terms of $\Delta t = 2^{-11}, 2^{-9}, 2^{-7}, 2^{-5}, 2^{-3}$. As can be seen in the figure, the convergence orders match the reference as is proved in Theorem \ref{thm:directmethoderr}.}
\label{ex1_fig2}
\end{figure}

\subsubsection{The strong convergence of the fast solver} \label{sec_strongfast}
For the strong convergence of the fast solver. Similarly as previous section, the time step for the reference solution is chosen as $k_m = 2^{-14}$, and $x_0 =1$. We first plot the strong order of convergence for $H = 0.8$ and $H = 0.6$ in Fig \ref{ex1_fig3}, where both cases match the order $\min \{\frac{3}{2}-H, 3-3H\}$ as proved in Theorem \ref{thm:fastmethoderr}. Next, the strong convergence order for various $H = 0.55, 0.6, 0.65, 0.7, 0.73, 0.75, 0.8, 0.84$ is plotted in Fig \ref{ex1_fig4}. The numerical orders agree with theoretical results.

Note that for the computational cost of the fast solver, as is shown in previous discussion is $O(NM)$, instead of $O(N^2)$, where $M$ is the number of terms used in the SOE approximation. For this example with $H = 0.8$, $\Delta t = 2^{-11}, T = 1$ and the tolerance $\epsilon = 10^{-9}$, we only need $M = 36$ while $N = 2048$. To compute longer time behavior as is interested here, say, $T = 128$, for $H = 0.8$ using $\Delta t = 2^{-9} $, one only needs $M = 44$ while $N = 65536$. This drastically reduced the computational cost especially with many sample paths to be simulated.
\begin{figure}[htbp]
\centering
\subfloat[$H = 0.8$]{\includegraphics[width=0.5\textwidth]{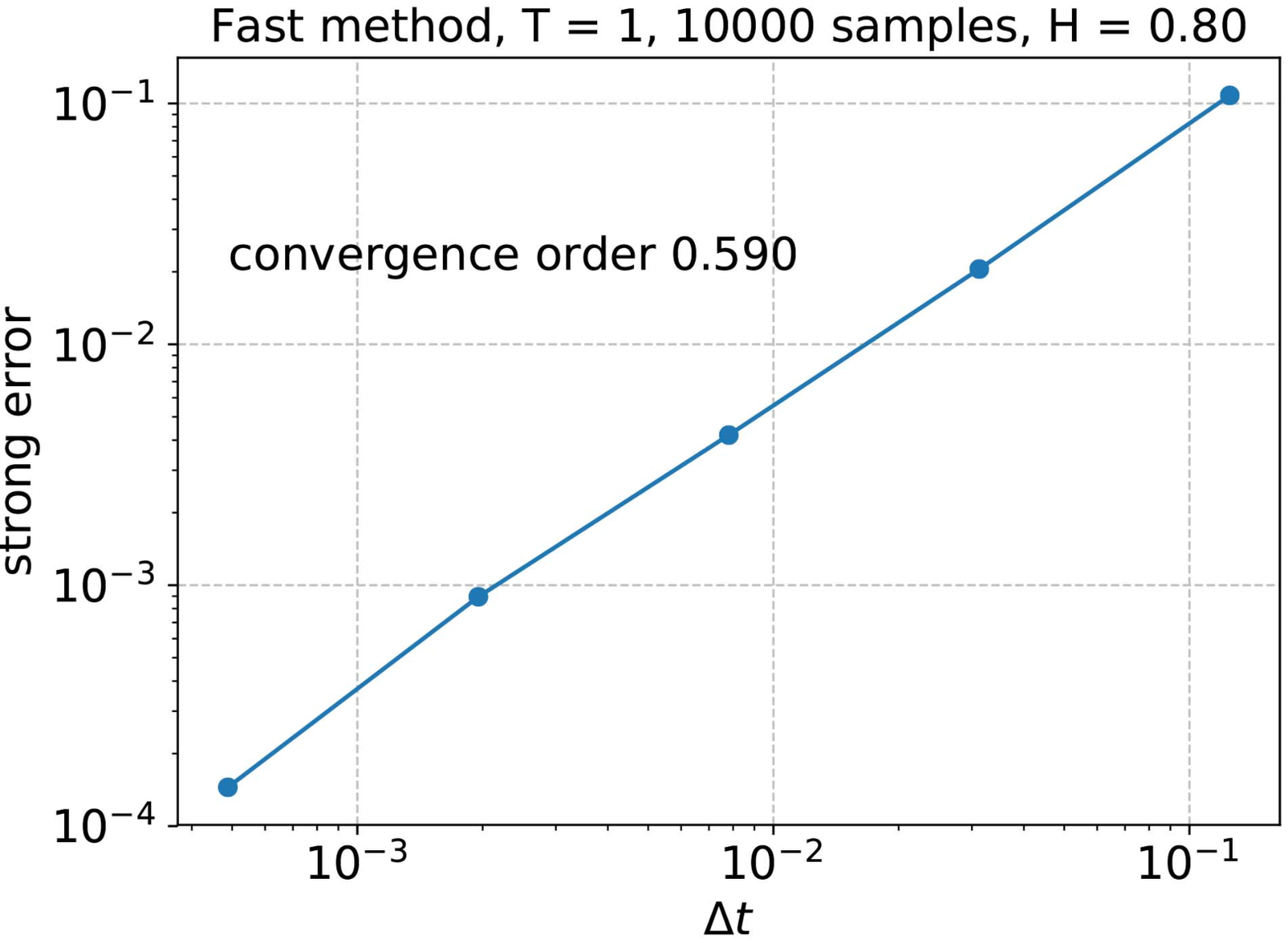} }
\subfloat[$H = 0.6 $]{\includegraphics[width=0.5\textwidth]{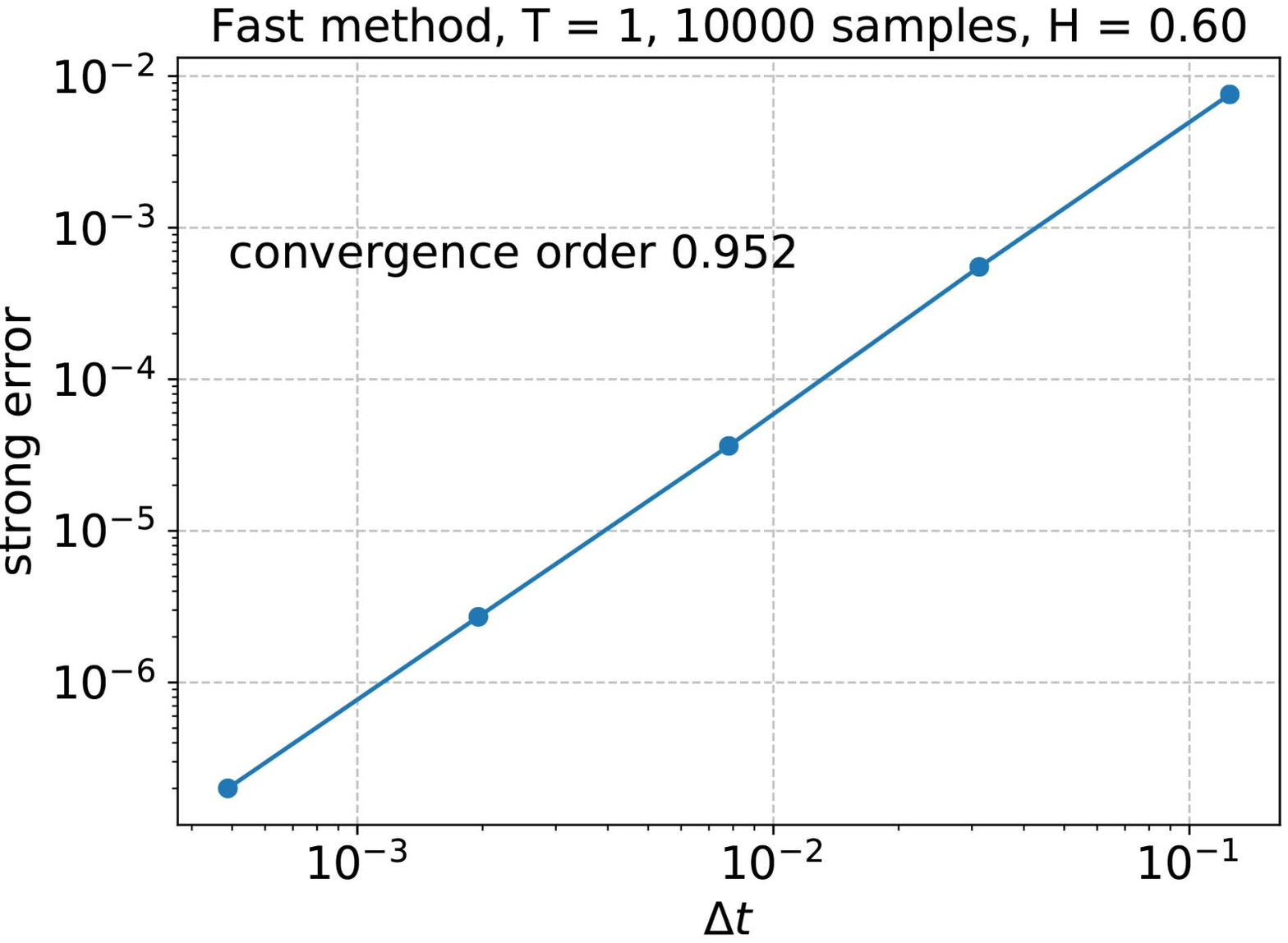} }
\caption{Example 1 (Section \ref{sec_ex1_strong}): The strong convergence of the direct method for $H = 0.8$ and $H = 0.6$, respectively in terms of various $\Delta t$ in log-log scale. The convergence orders match the theoretical result $\min \{\frac{3}{2}-H, 3-3H\}$, as is proved in Theorem \ref{thm:directmethoderr}.}
\label{ex1_fig3}
\end{figure}

\begin{figure}[htbp]
\centering
\includegraphics[width=0.7\textwidth]{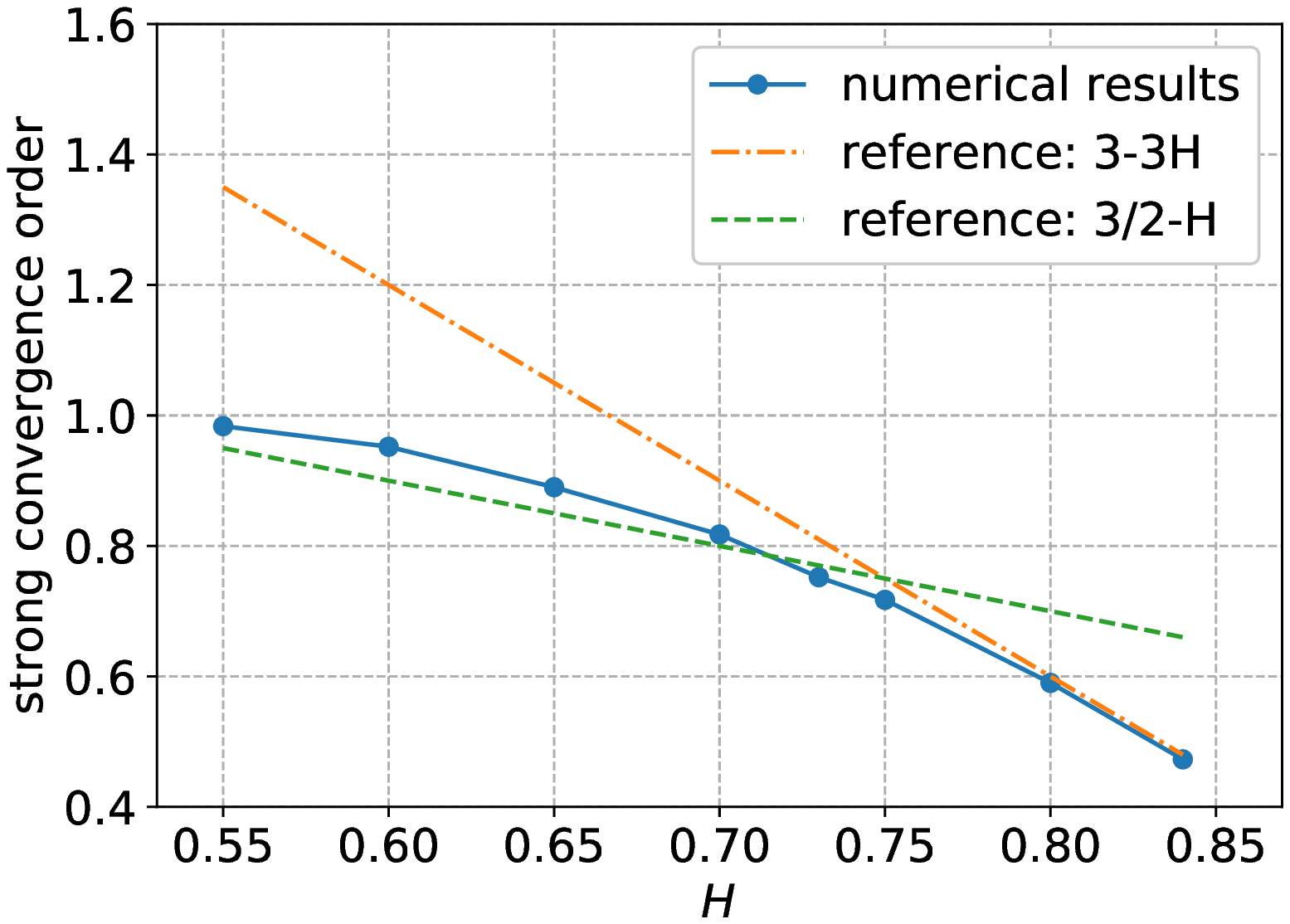}

\caption{Example 1 (Section \ref{sec_ex1_strong}): The strong convergence of the fast method for $H = 0.55, 0.6, 0.65, 0.7, 0.73, 0.75, 0.8, 0.84$ computed over $10000$ sample paths in terms of $\Delta t = 2^{-11}, 2^{-9}, 2^{-7}, 2^{-5}, 2^{-3}$. The convergence orders match those proved in Theorem \ref{thm:fastmethoderr}.}
\label{ex1_fig4}
\end{figure}

\subsubsection{The test of ergodicity} \label{sec_ex1_erg}
As has been proved rigorously in \cite[Theorem 2]{liliulu2017} that for the linear force case, the process has ergodicity and converges algebraically to the Gibbs measure
\begin{equation}
\mu(dx) \sim \exp \left( -\frac{1}{2} x^2\right)\,dx. \label{ex1_gibbs}
\end{equation}
Consider the initial data $x_0 = 0$, $H = 0.75$ and computed over $50000$ sample paths with $\Delta t = 2^{-9}$. Fig. \ref{ex1_fig5} plots the empirical distribution at different times $t = 0, 0.25, 1, 2, 8, 32$. Next, we plot the variance of $x(t)$ (also called the mean square displacement in physical literature) and its difference between its equilibrium. As can be seen in Fig. \ref{ex1_fig6}, the variance of $x(t)$ convergences to its equilibrium $E(x^2(\infty)) = 1$ algebraically.

\begin{figure}[htbp]
\centering
\subfloat[$t = 0$]{\includegraphics[width=0.5\textwidth]{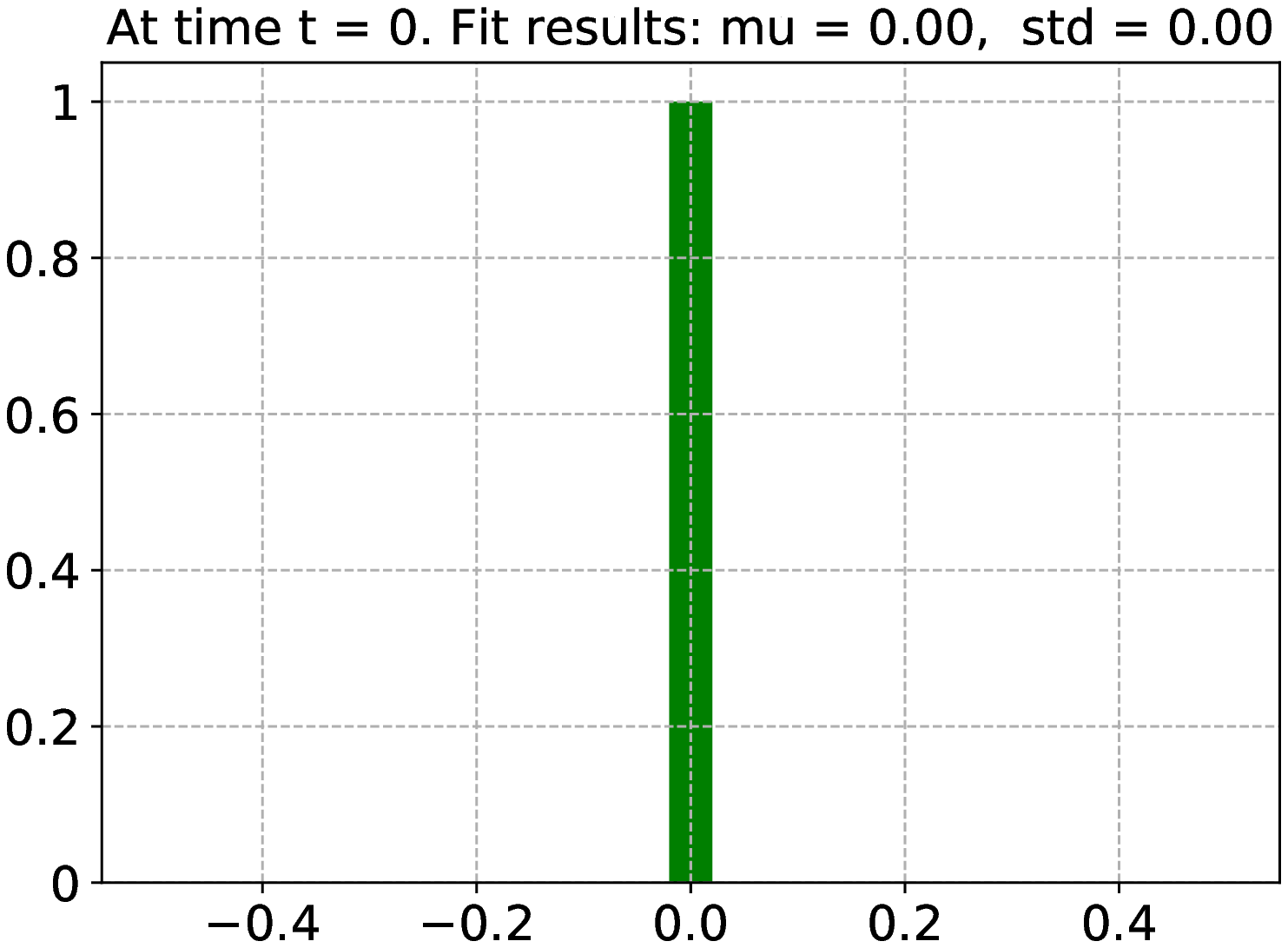} }
\subfloat[$t = 0.25 $]{\includegraphics[width=0.5\textwidth]{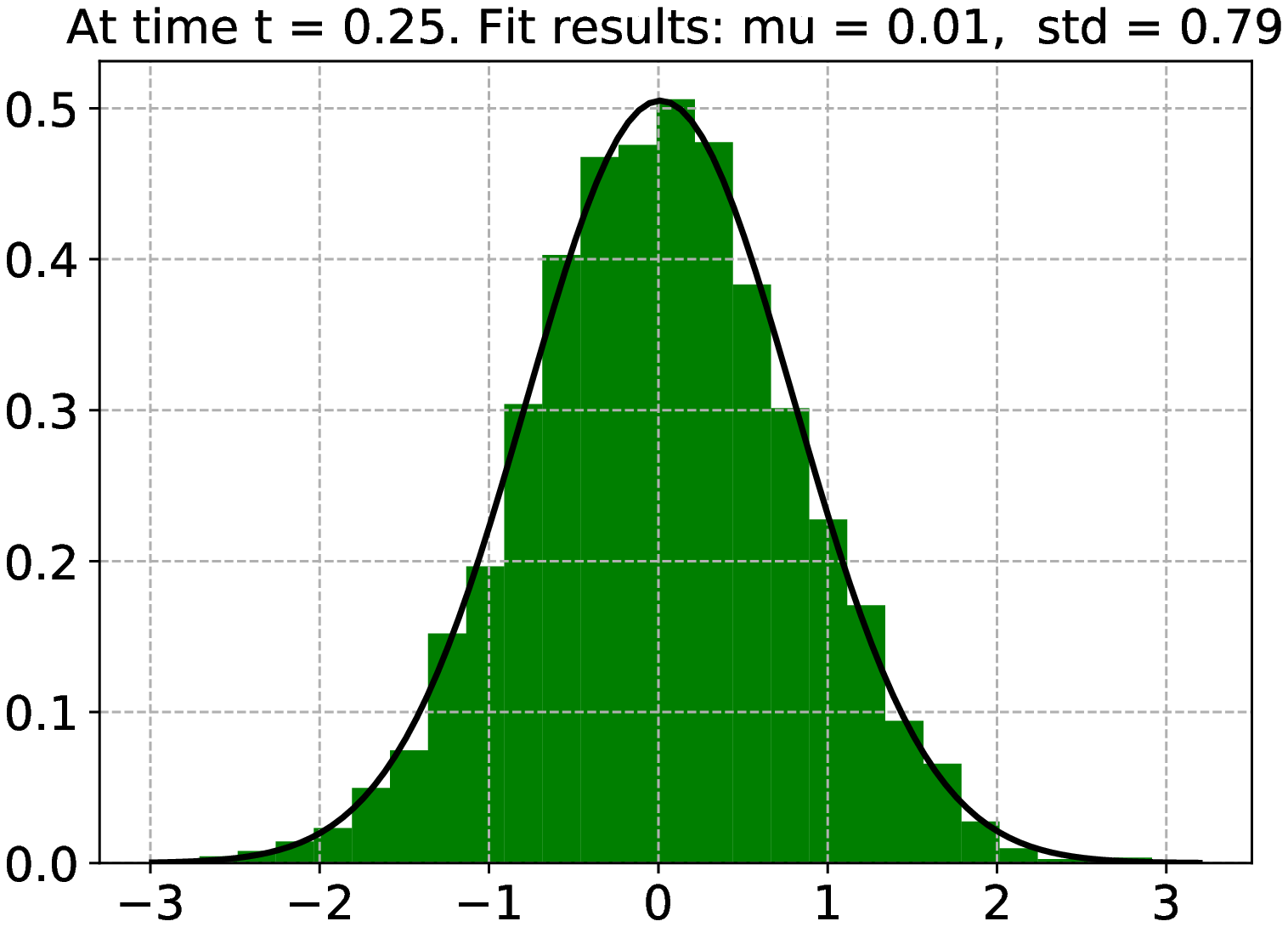} } \\
\subfloat[$t = 1$]{\includegraphics[width=0.5\textwidth]{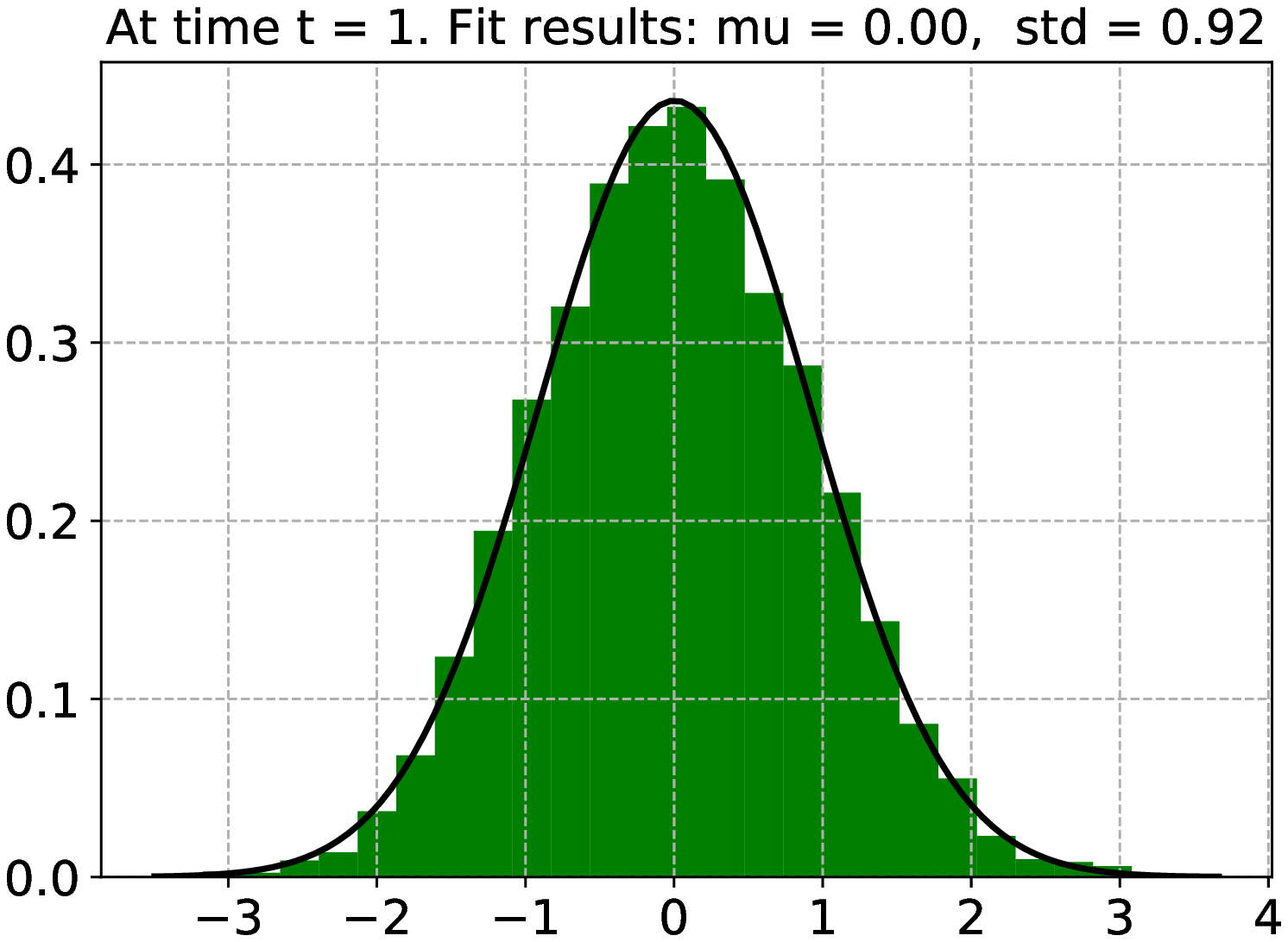} }
\subfloat[$t = 2$]{\includegraphics[width=0.5\textwidth]{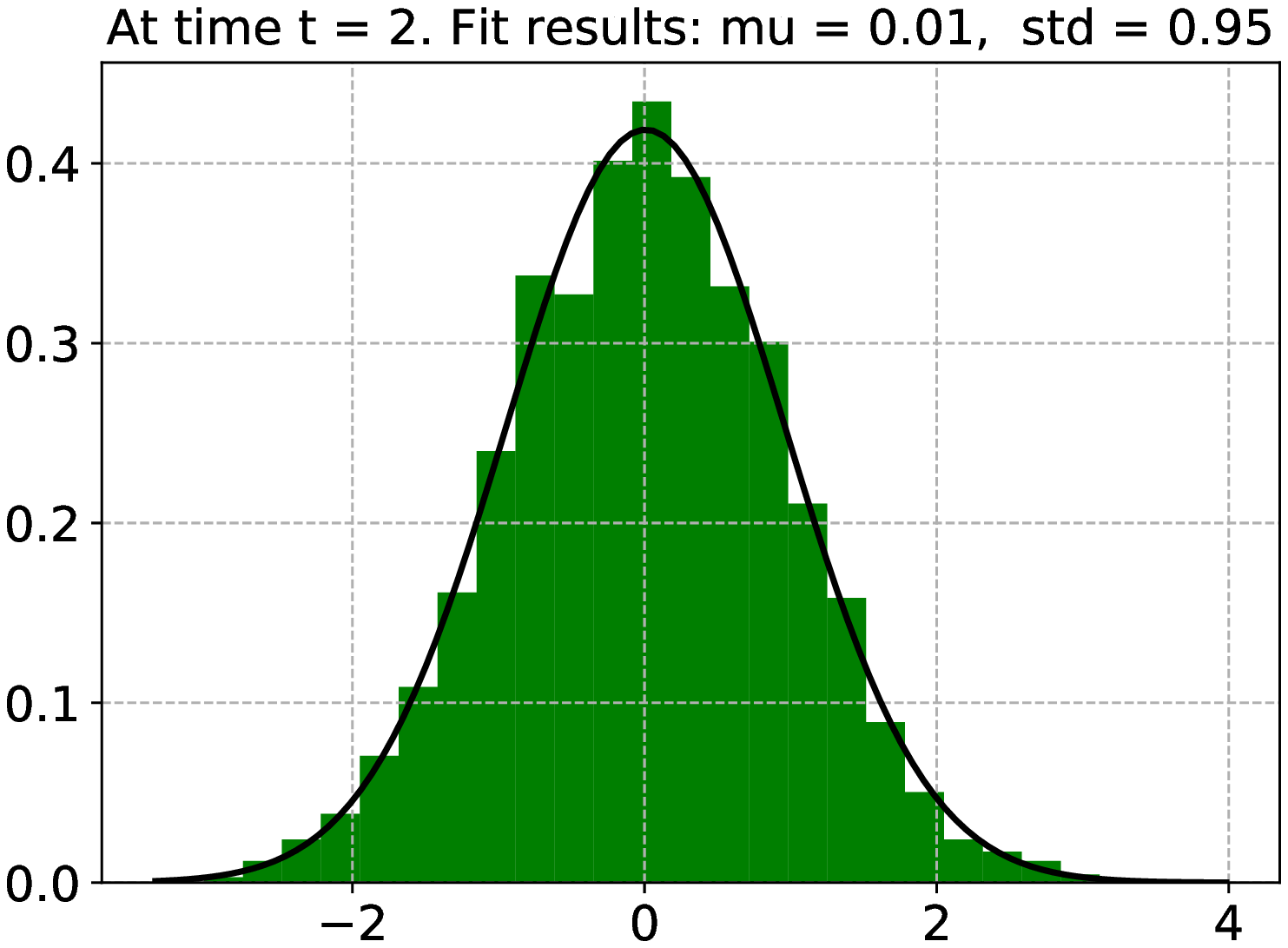} } \\
\subfloat[$t = 8$]{\includegraphics[width=0.5\textwidth]{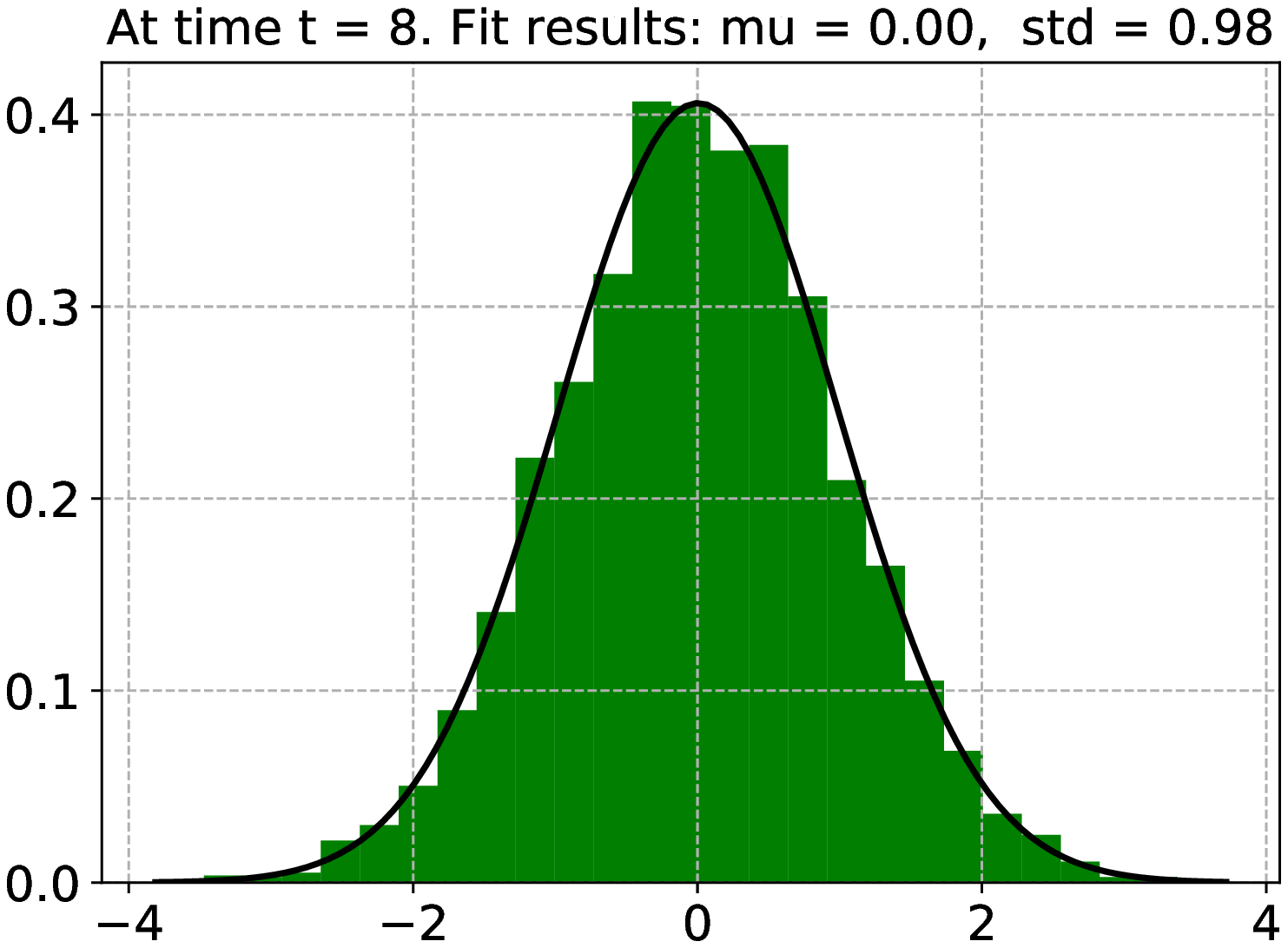} }
\subfloat[$t = 32 $]{\includegraphics[width=0.5\textwidth]{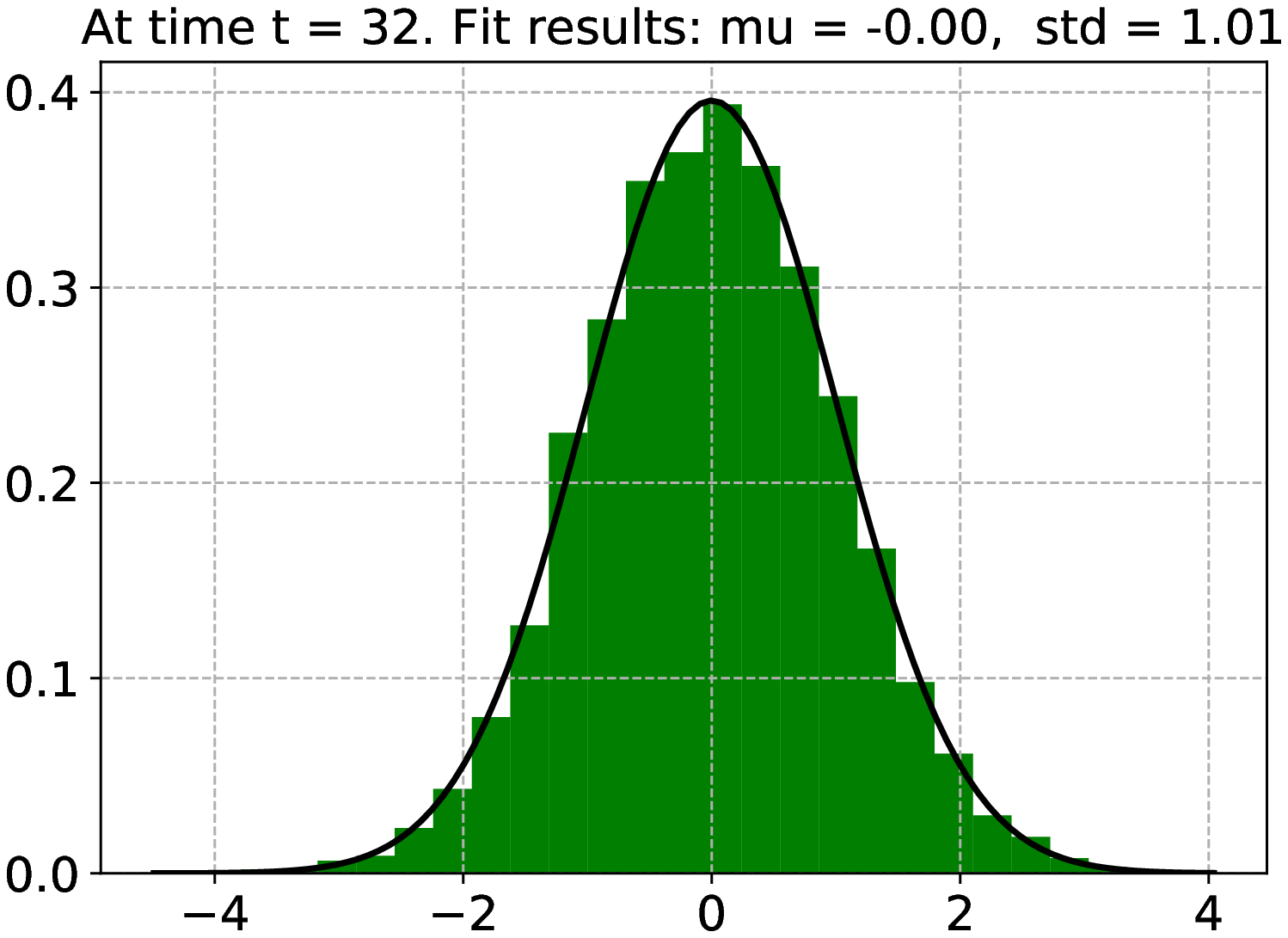} }
\caption{Example 1 (Section \ref{sec_ex1_erg}): the empirical distribution of $x$ at different times $t = 0, 0.25, 1, 2, 8, 32$. It can be seen that the distribution stays as a gaussian shape and converges to the Gibbs measure \eqref{ex1_gibbs} with mean 0 and variance 1.}
\label{ex1_fig5}
\end{figure}

\begin{figure}[htbp]
\centering
\subfloat[variance of $x(t)$]{\includegraphics[width=0.5\textwidth]{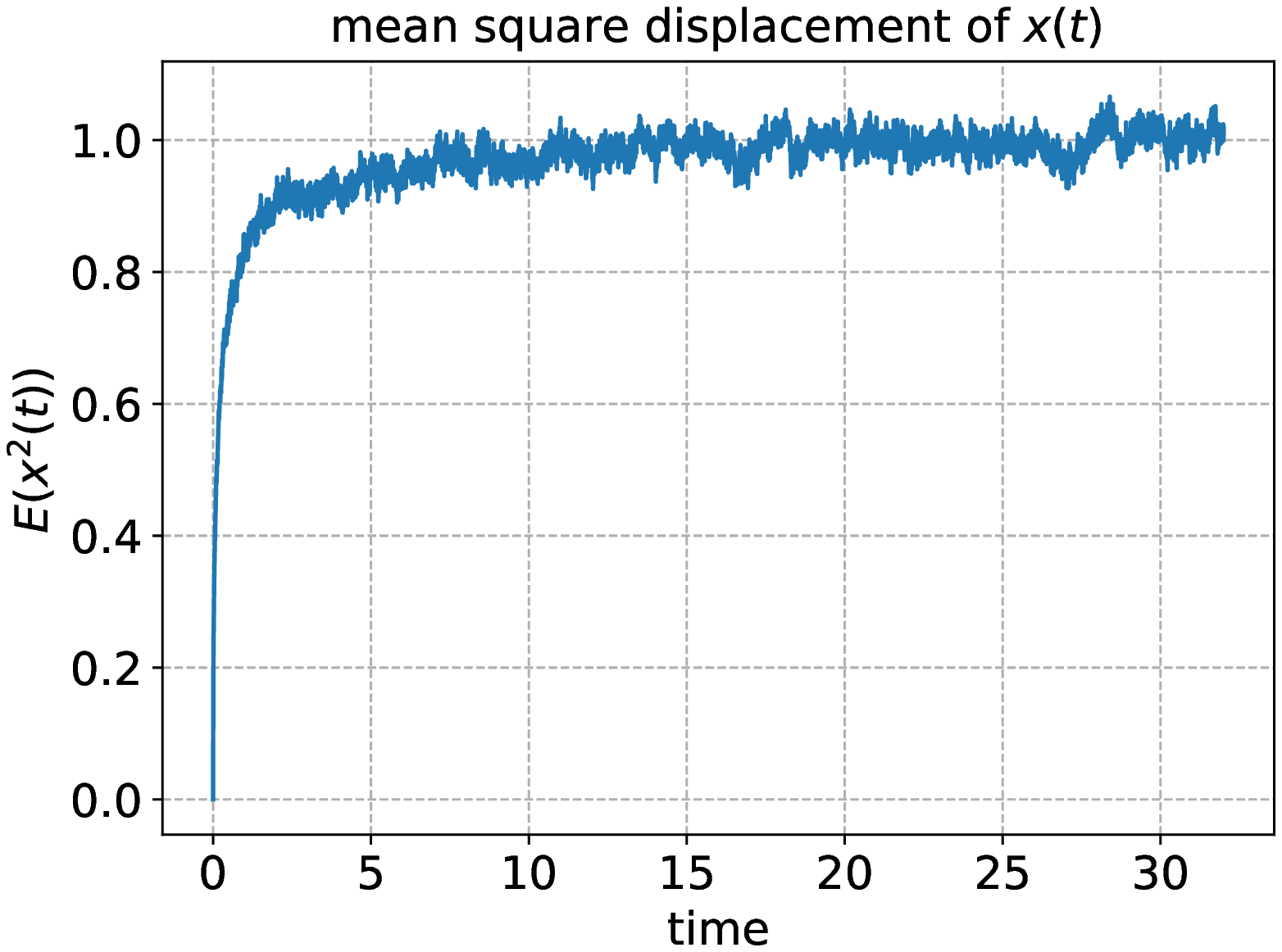} }
\subfloat[$E(x^2(\infty)) - E(x^2(t))$]{\includegraphics[width=0.5\textwidth]{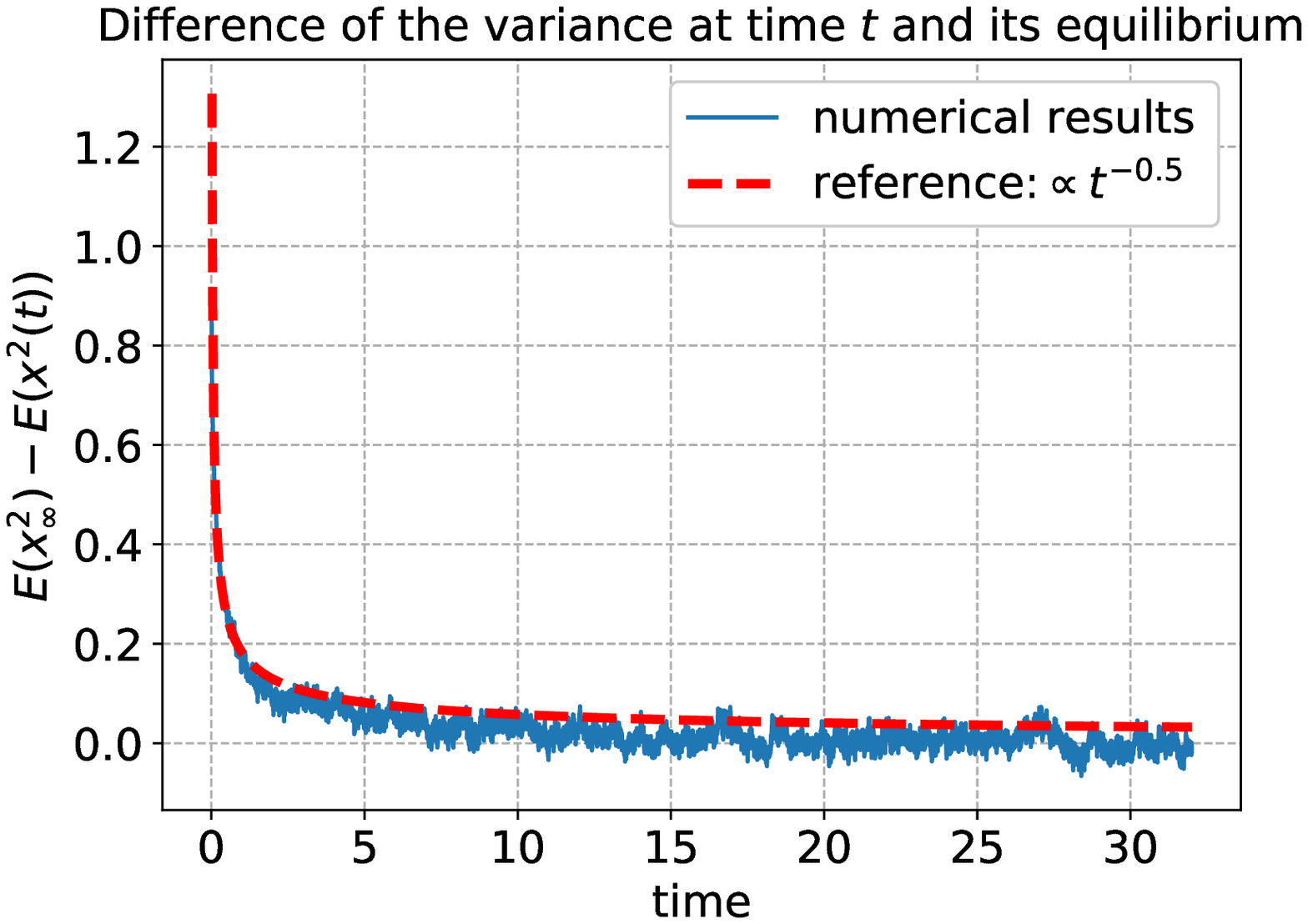} }
\caption{Example 1 (Section \ref{sec_ex1_erg}): The mean-square displacement of $x(t)$, i.e., the variance of $x(t)$ in the case $H = 0.75$. The left plot shows its tendency of approaching its equilibrium. The right figure plots its different between its equilibrium. As can be seen the decay rate is algebraic.}
\label{ex1_fig6}
\end{figure}

\subsection{Example 2 (confining potential)}

Here, we consider a 1D example, but with general potential $V(x)$ with confining structure. To be specific,
\[V(x) = ax^4 + bx^3 + cx^2 .\]
is considered for its ergodicity, where $b =0$ gives rise to the symmetric case whereas $b \neq 0$ corresponds to the asymmetric case.
\subsubsection{Ergodicity of symmetric confining potential} \label{sec_ex2_sym}
Consider the symmetric double well potential
$$V(x) = \frac{1}{4}x^4 - \frac{1}{2}x^2, \quad V'(x) = x^3 - x.$$
Consider the initial data $x_0 = 1$, $H = 0.6$ and computed over $50000$ sample paths with $\Delta t = 2^{-5}$ till the final time $T = 512$ using the fast algorithm. Fig. \ref{ex2_symfig} shows the empirical distribution at different times. It can be seen that the empiral distribution of $x$ concentrates at $x=1$ initially, then gradually expands and shifts to the left, and finally presents a symmetric double-well shape that matches the reference Gibbs measure
$$\mu(dx) \sim \exp(-V(x)) \,dx.$$
Note that to consider the long time behavior, the initial values of $x$ does not matter much.
\begin{figure}[htbp]
\centering
\subfloat[$t = 0$]{\includegraphics[width=0.5\textwidth]{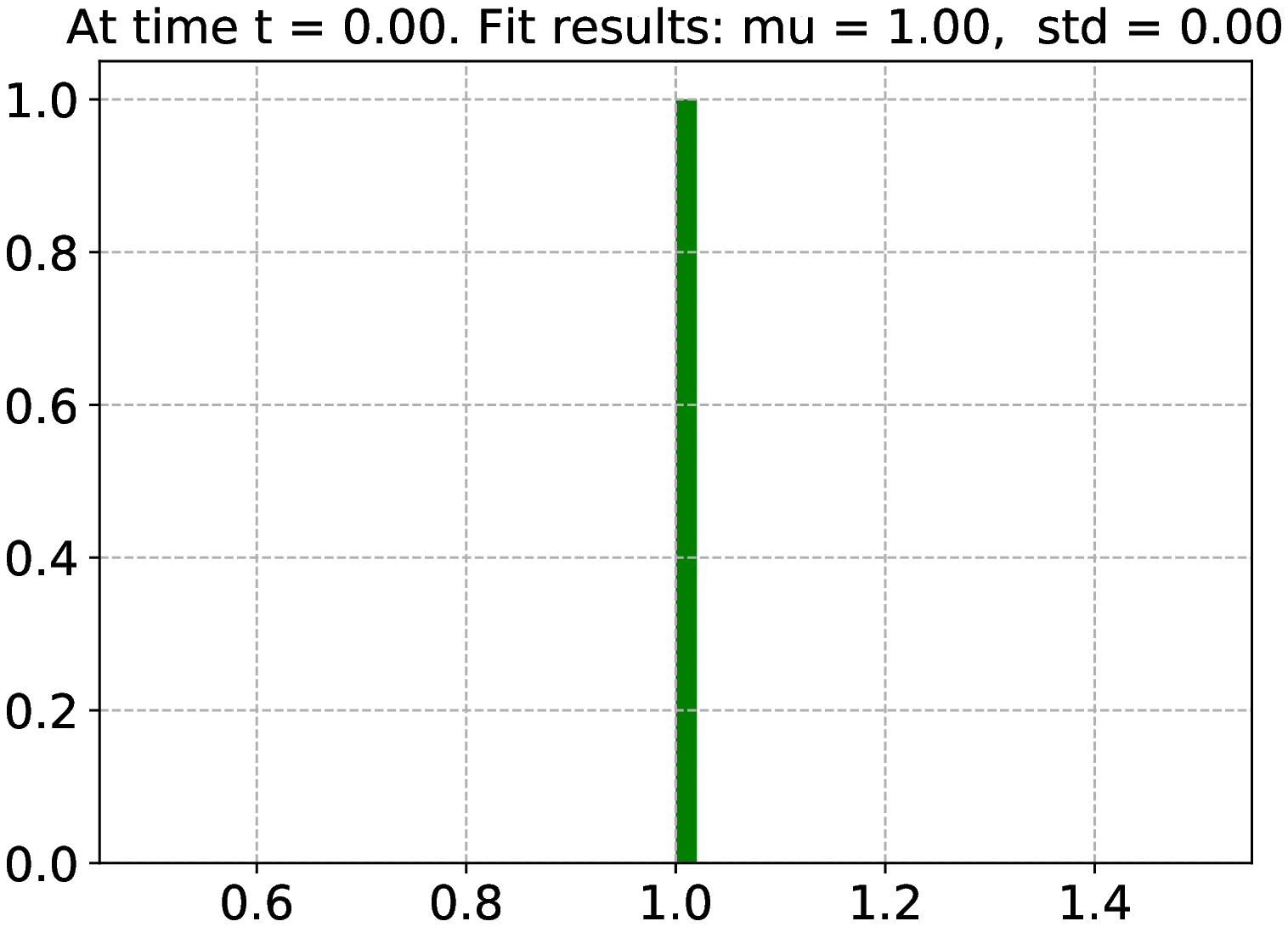} }
\subfloat[$t = 0.0625$]{\includegraphics[width=0.5\textwidth]{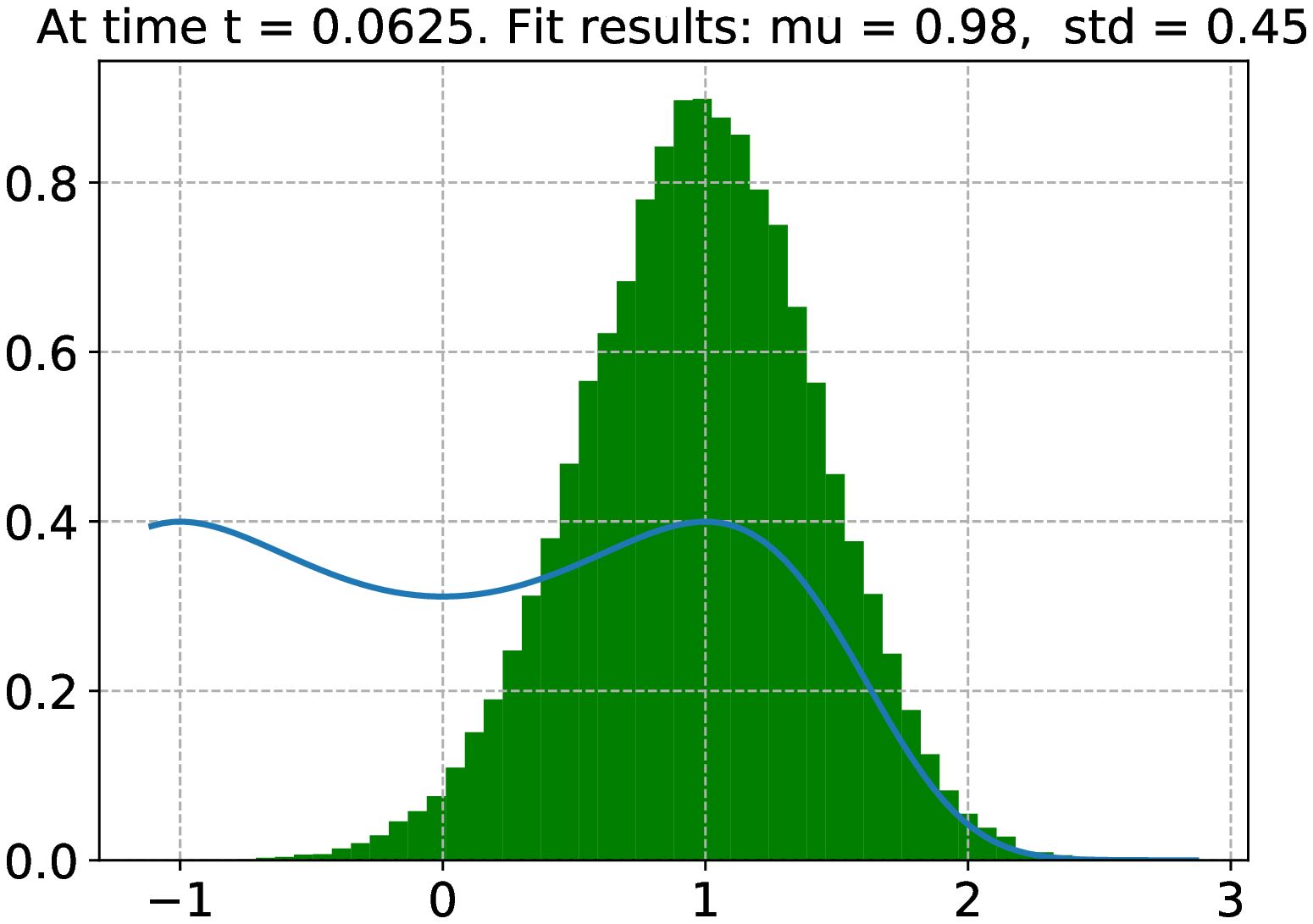} }\\
\subfloat[$t = 0.25 $]{\includegraphics[width=0.5\textwidth]{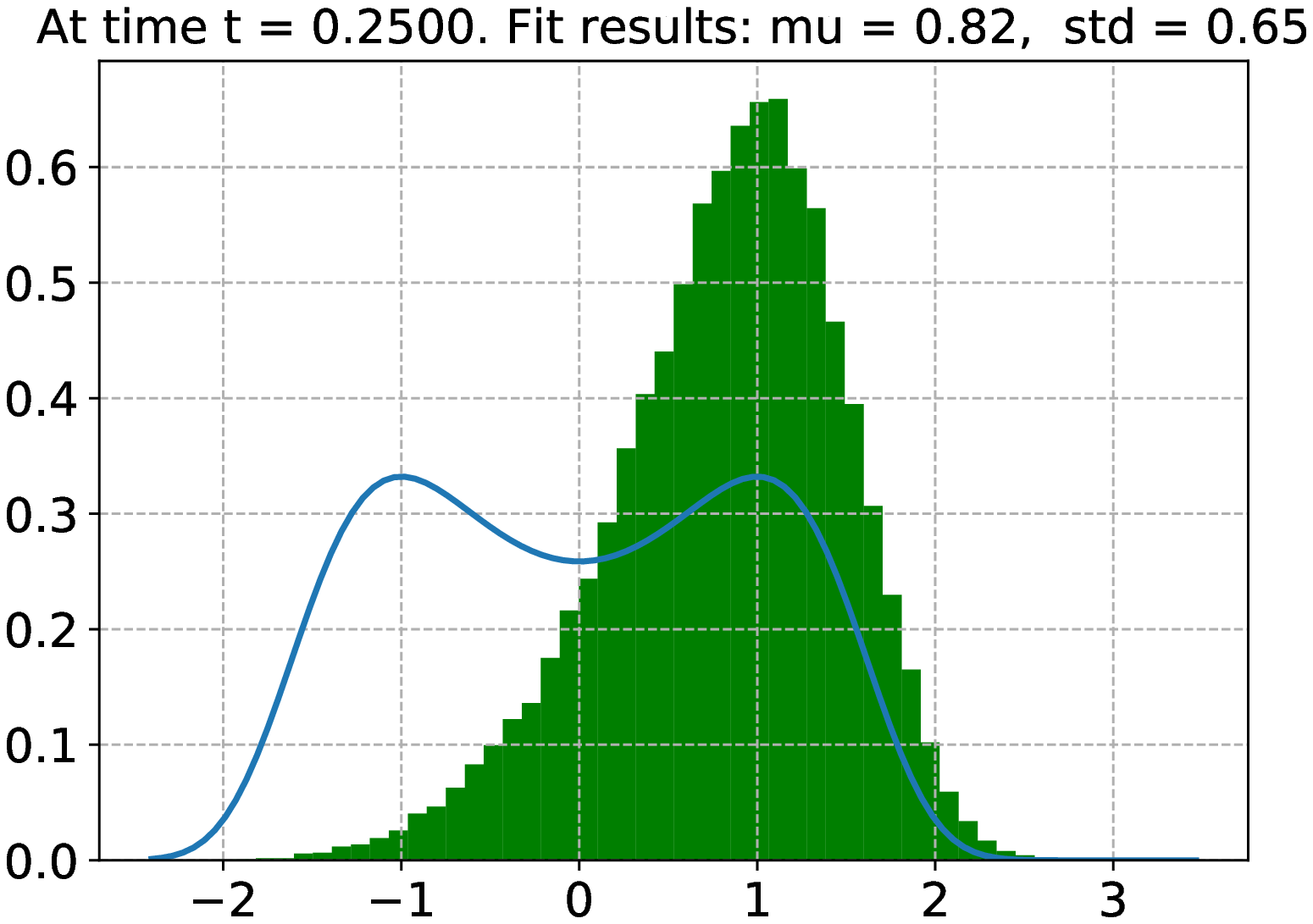} }
\subfloat[$t = 1 $]{\includegraphics[width=0.5\textwidth]{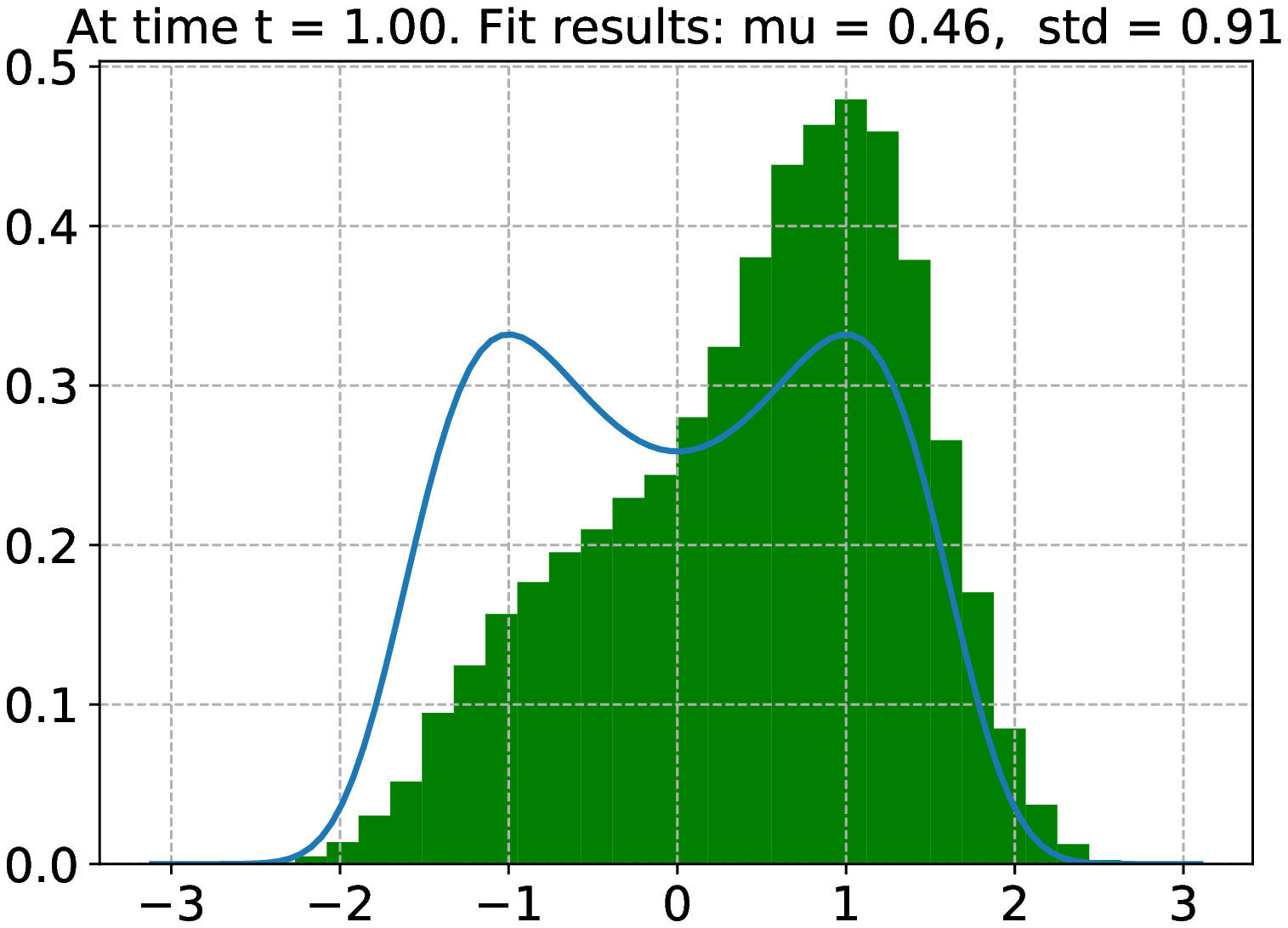} } \\
\subfloat[$t = 2$]{\includegraphics[width=0.5\textwidth]{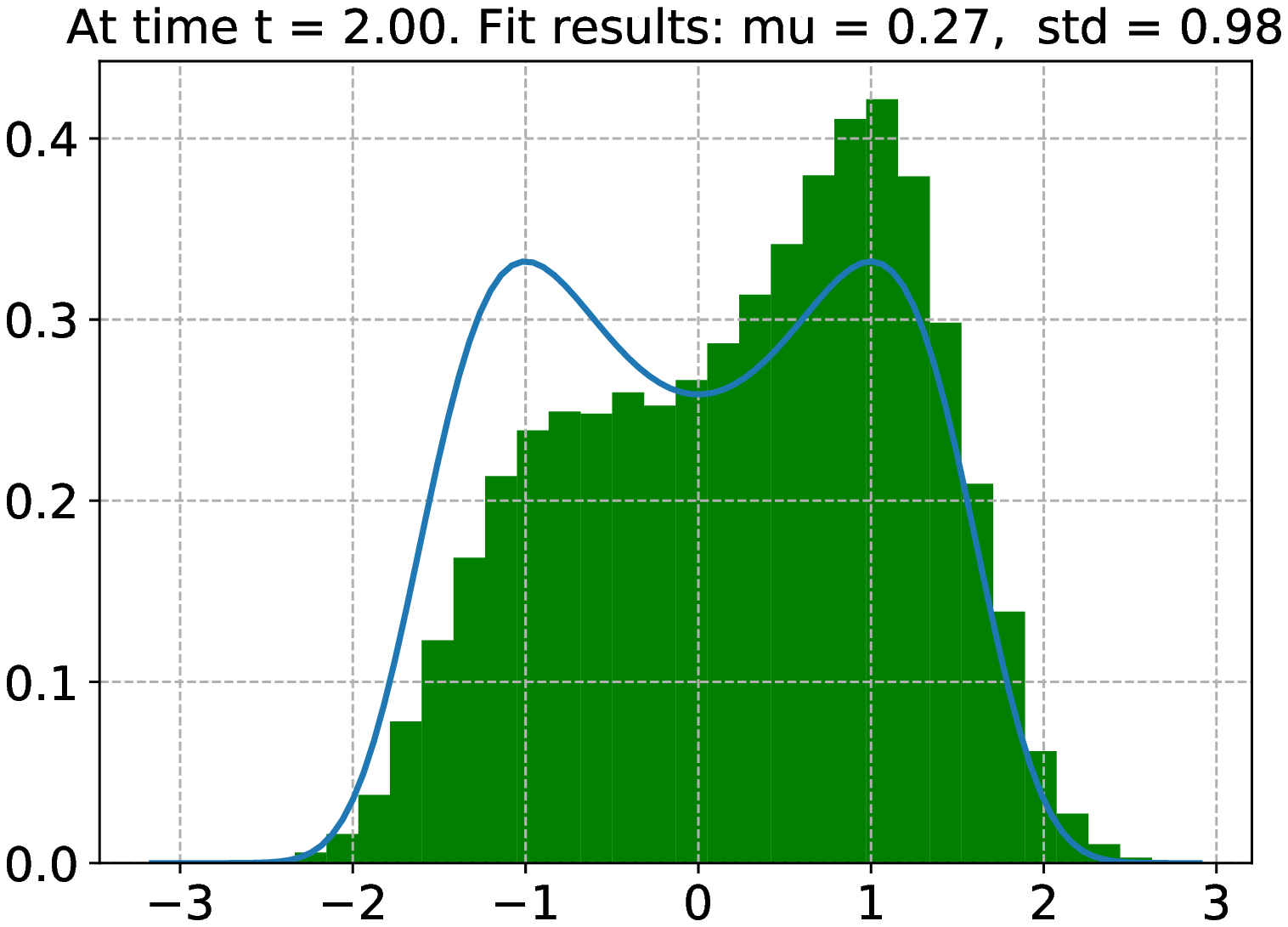} }
\subfloat[$t = 4$]{\includegraphics[width=0.5\textwidth]{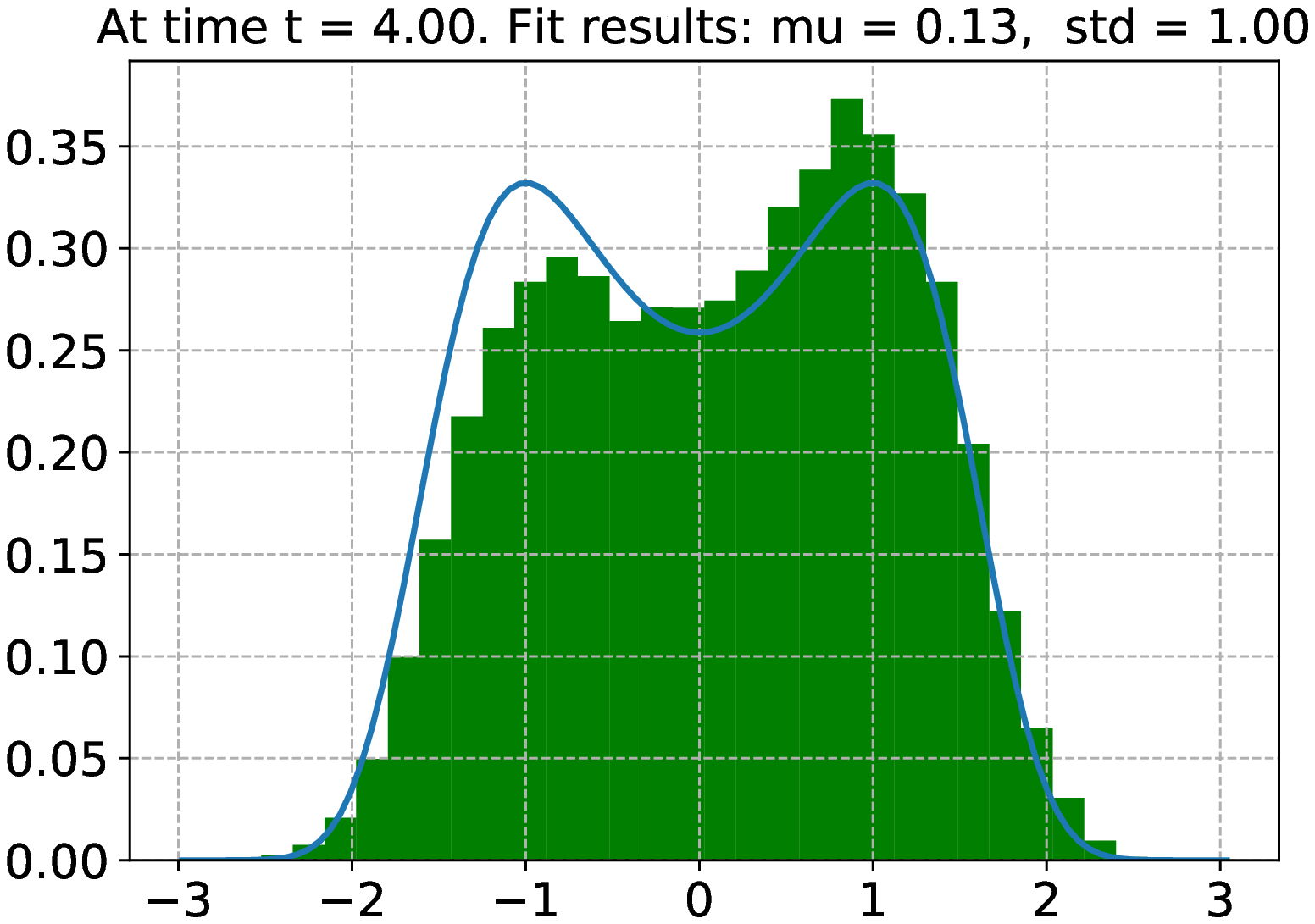} }
\caption{Example 2 (Section \ref{sec_ex2_sym}): the empirical distribution of $x$ at different times $t = 0, 0.0625, 0.25, 1, 2, 4, 8, 16, 32, 512$. The solid line is the reference gibbs measure $\sim \exp(-V(x))$. It can be seen that given the intial data concentrating at $x=0$, the distribution of $x$ expands, and moves gradually to create a symmetric double-well shape.}
\label{ex2_symfig}
\end{figure}

\begin{figure}[htbp]
\ContinuedFloat
\centering
\subfloat[$t = 8$]{\includegraphics[width=0.5\textwidth]{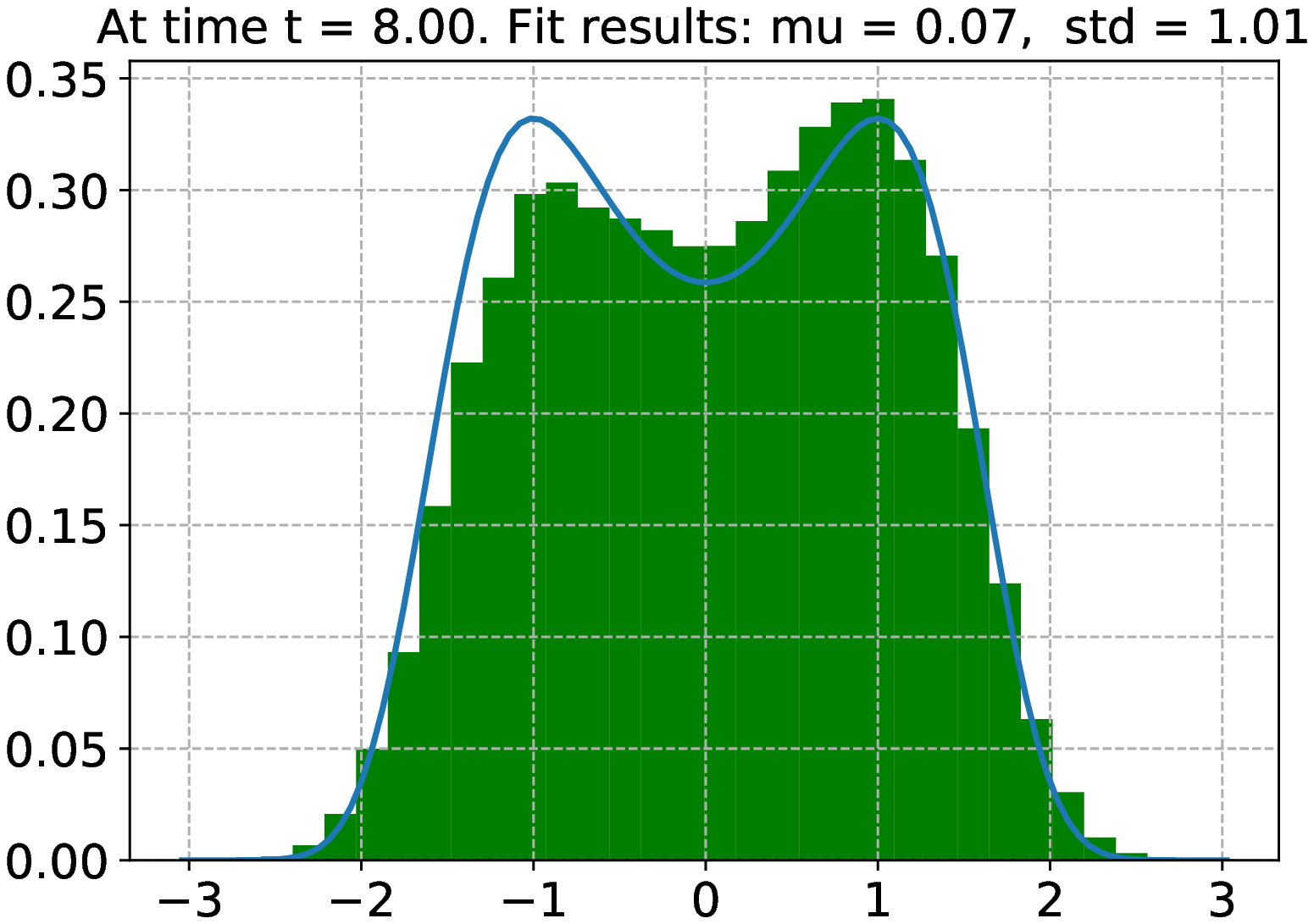} }
\subfloat[$t = 16$]{\includegraphics[width=0.5\textwidth]{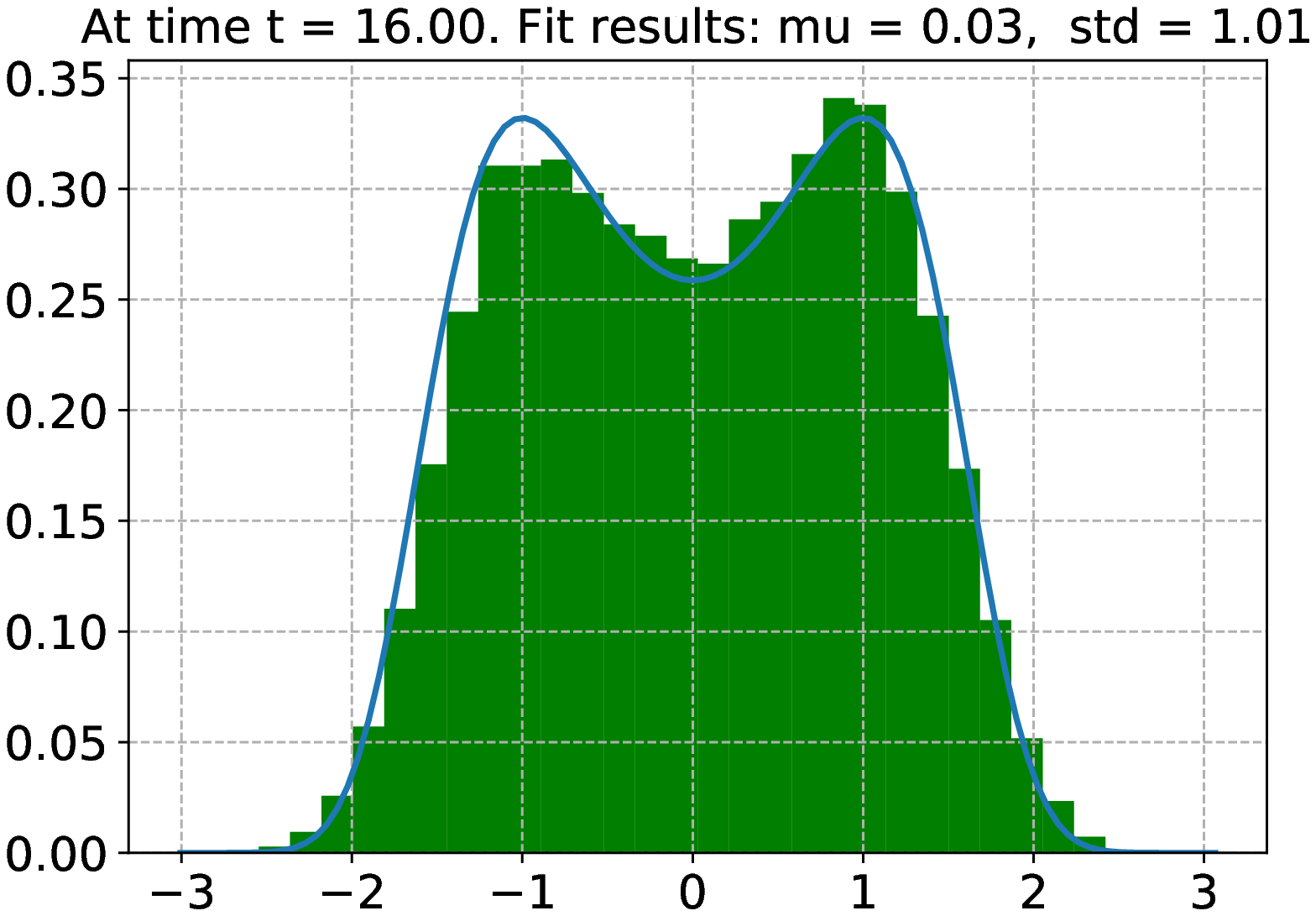} } \\
\subfloat[$t = 32 $]{\includegraphics[width=0.5\textwidth]{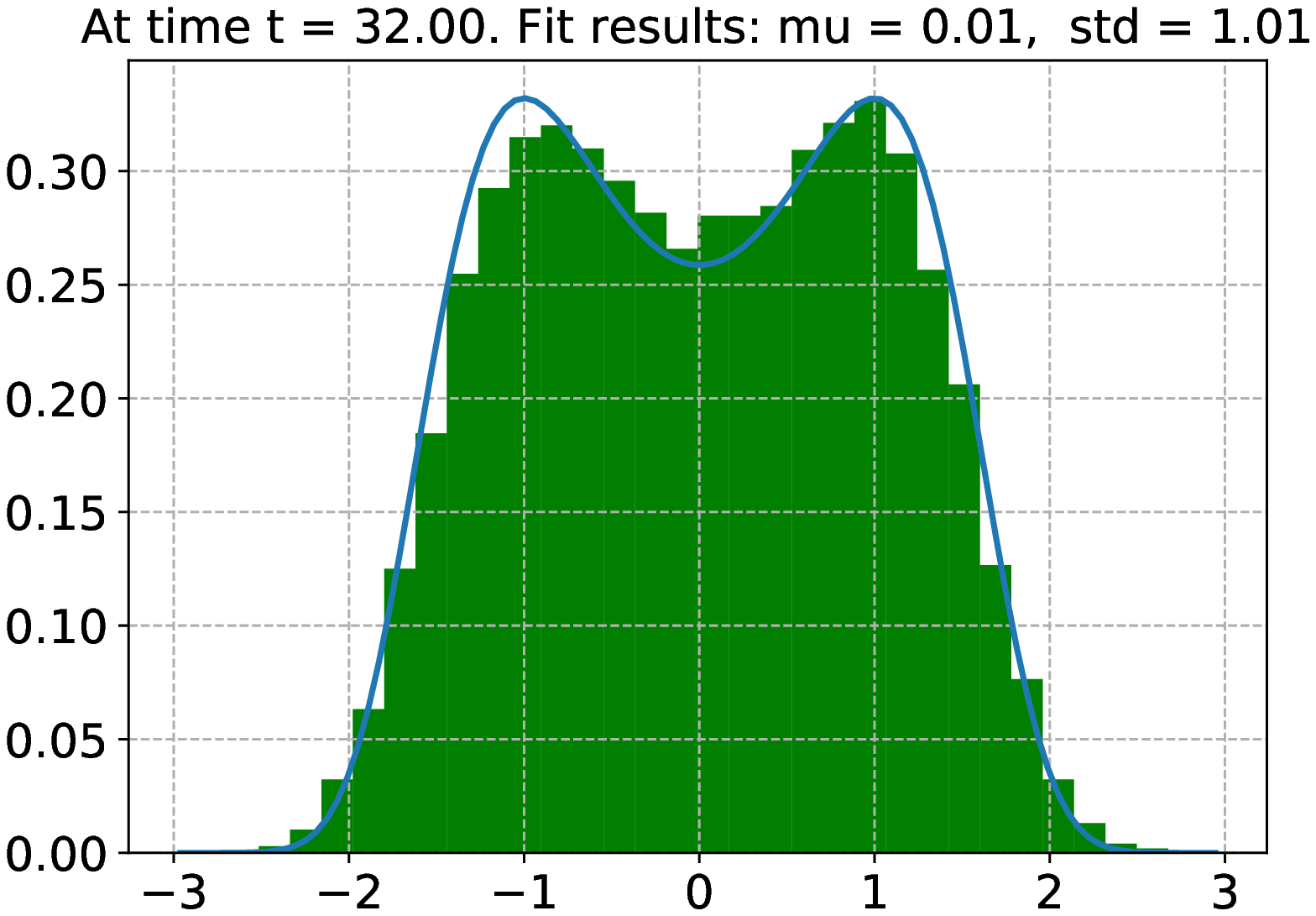} }
\subfloat[$t = 512 $]{\includegraphics[width=0.5\textwidth]{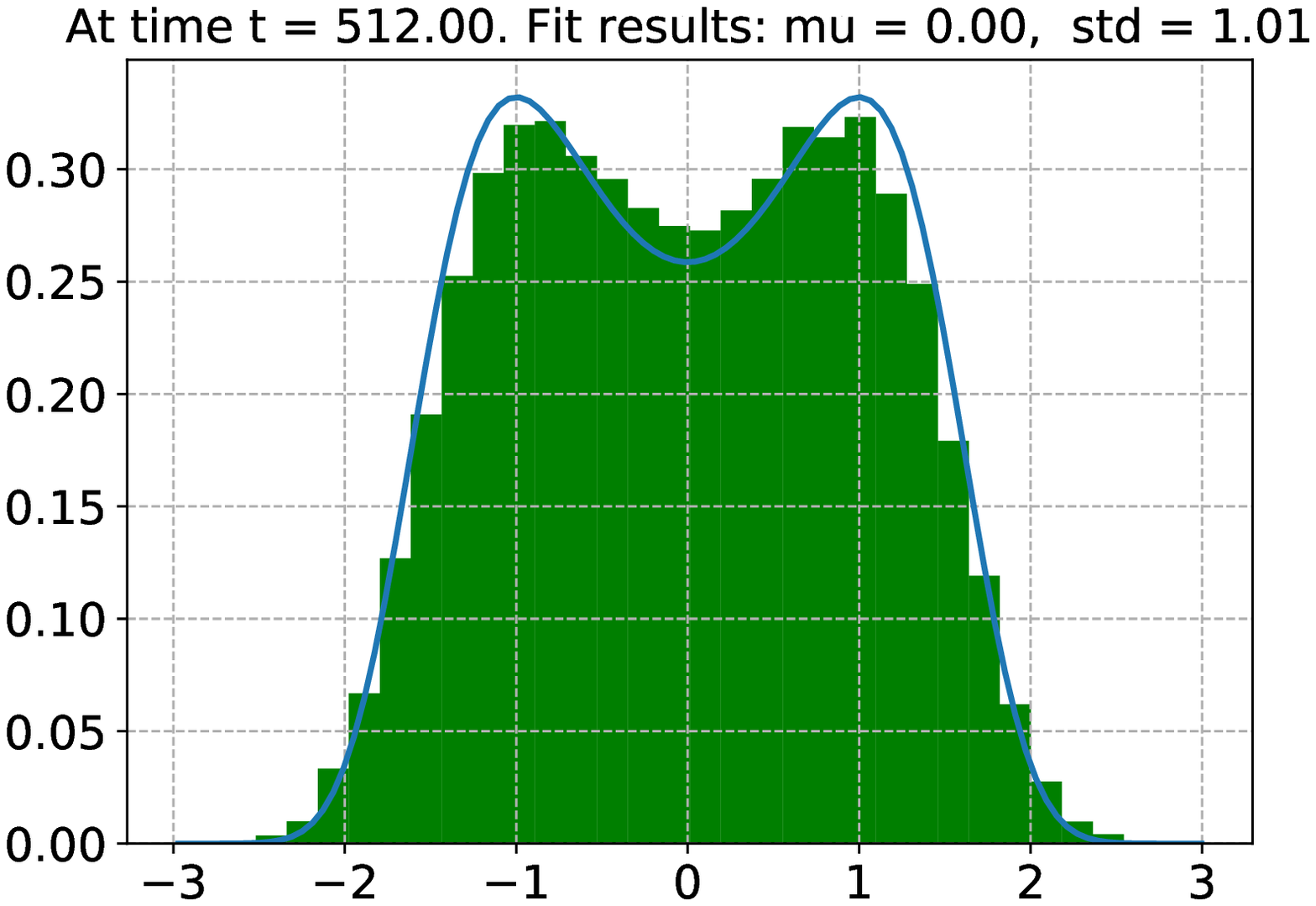} }
\captionsetup{list=off,format=cont}
\caption{Example 2 (Section \ref{sec_ex2_sym}): the empirical distribution of $x$ at various moments $t = 0, 0.0625, 0.25, 1, 2, 4, 8, 16, 32, 512$. }
\label{ex2_symfig2}
\end{figure}

\subsubsection{Ergodicity of asymmetric confining potential} \label{sec_ex2_asym}
Consider the asymmetric double well potential
$$V(x) = \frac{1}{4}x^4 + \frac{1}{3}x^3 - x^2 , \quad V'(x) = x^3 + x^2 - 2x.$$
Consider the initial data $x_0 = 1$, $H = 0.6$ and computed over $50000$ sample paths with $\Delta t = 2^{-5}$ till the final time $T = 512$. Fig. \ref{ex2_asymfig1} and Fig. \ref{ex2_asymfig2} (cont.) plot the empirical distribution at different times. Since the initial data are all assigned as $x_0 = 1$, at first the empiral distribution of $x$ concentrates at $x=1$, then expands according to the time evolution, gradually shifts to the left and presents an asymmetric double-well shape in the end, which resembles the reference Gibbs measure
$$\mu(dx) \sim \exp(-V(x)) \,dx.$$

\begin{figure}[htbp]
\centering
\subfloat[$t = 0$]{\includegraphics[width=0.5\textwidth]{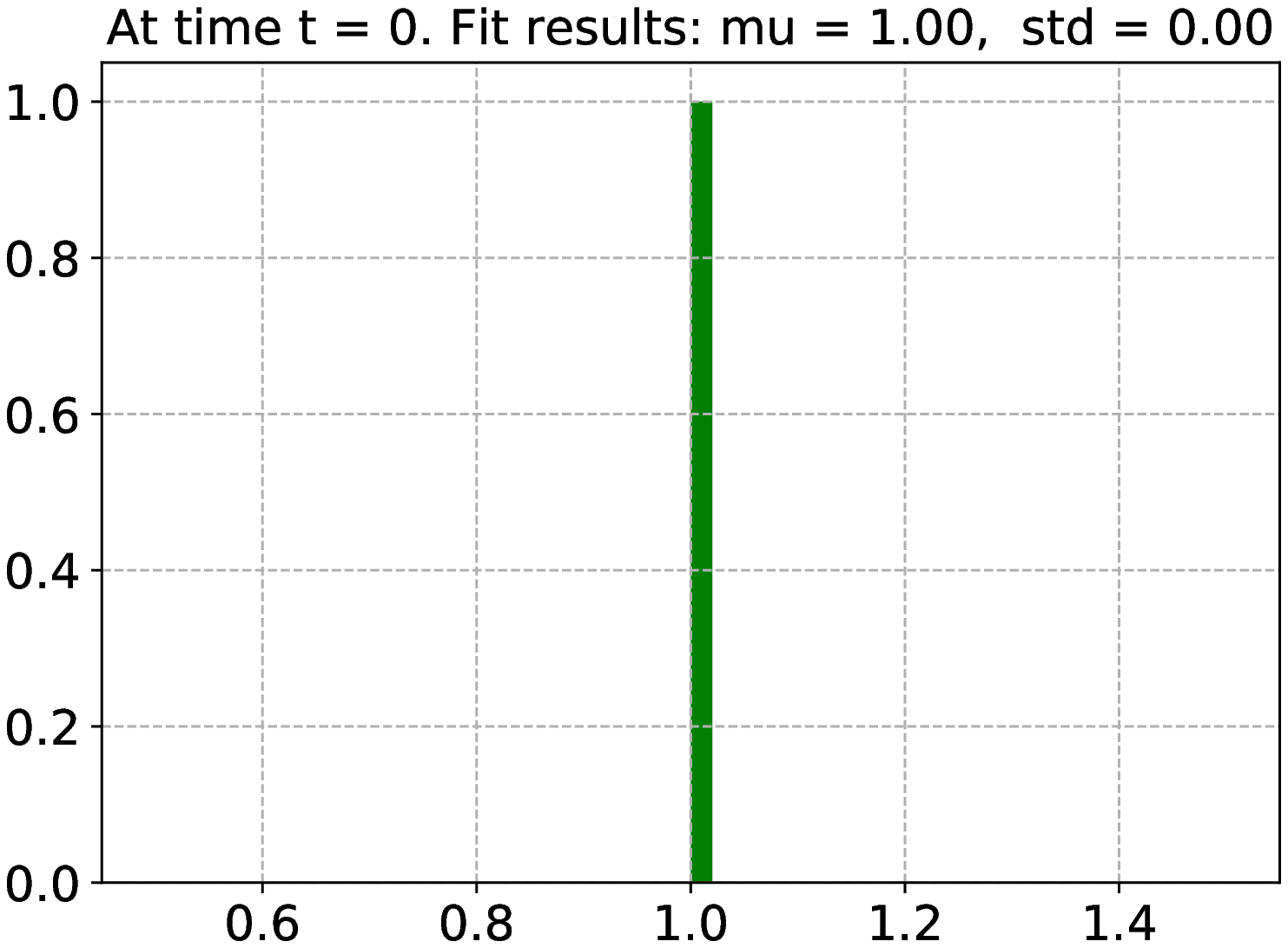} }
\subfloat[$t = 0.0625$]{\includegraphics[width=0.5\textwidth]{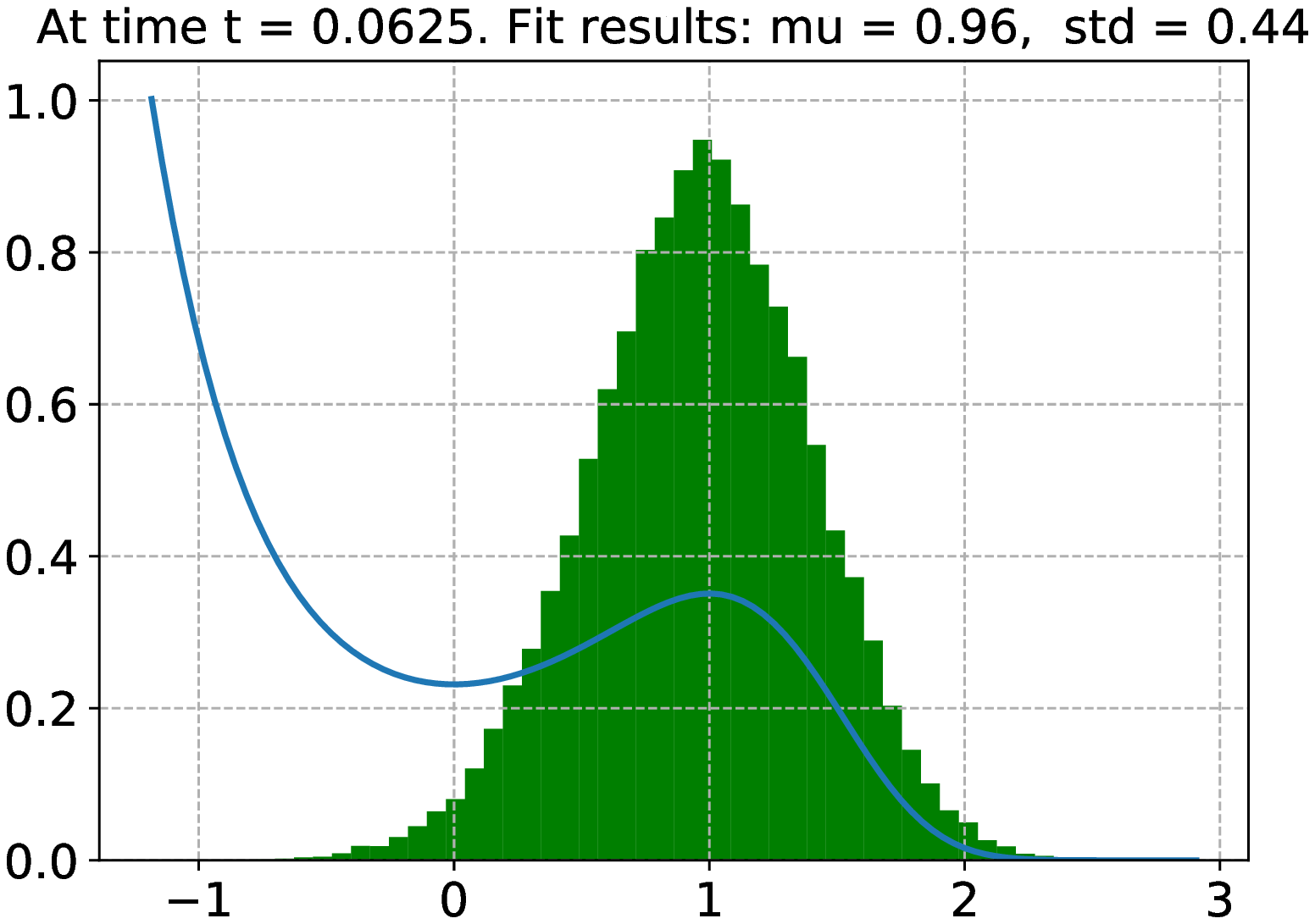} }\\
\subfloat[$t = 0.25 $]{\includegraphics[width=0.5\textwidth]{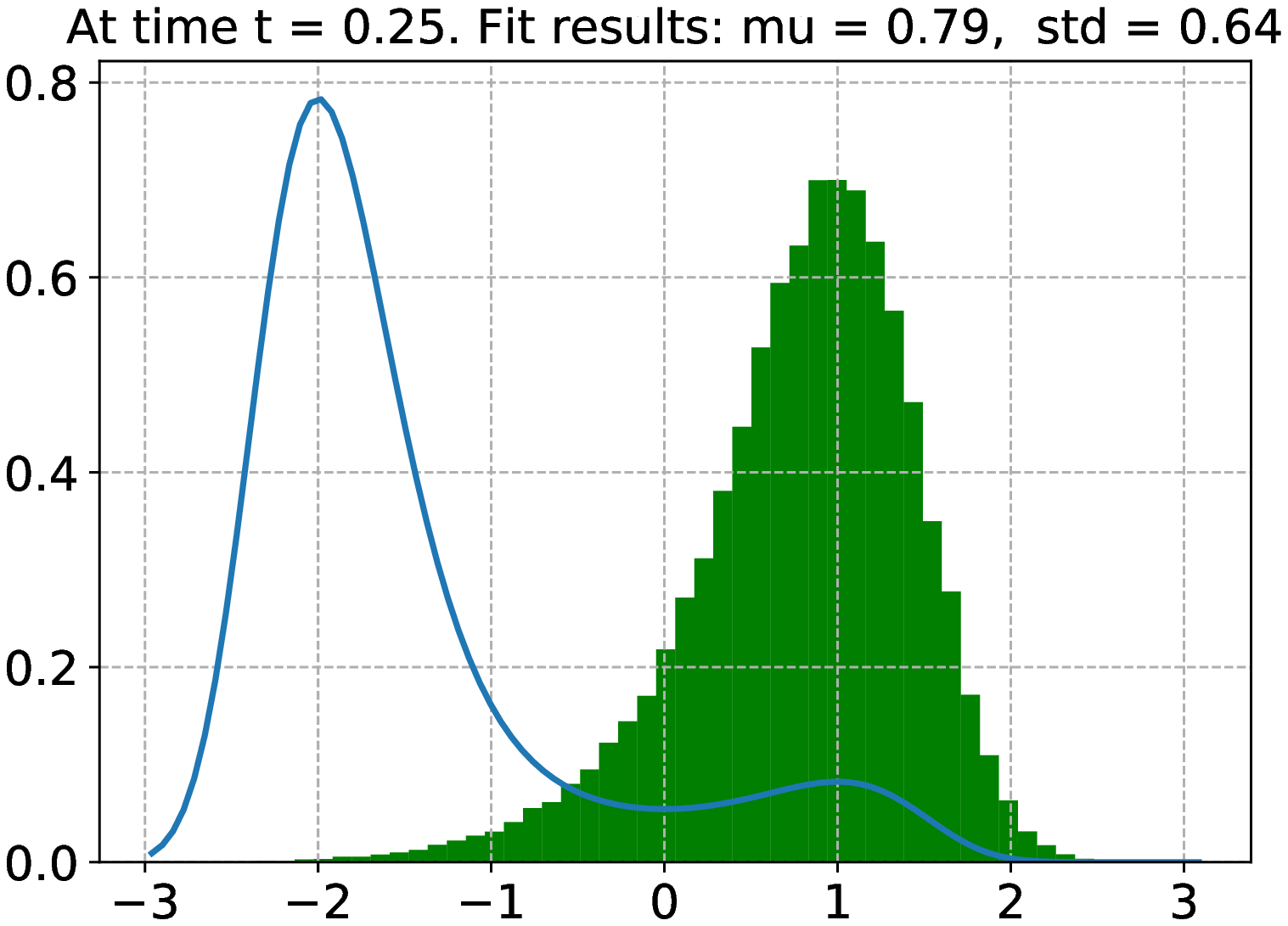} }
\subfloat[$t = 1 $]{\includegraphics[width=0.5\textwidth]{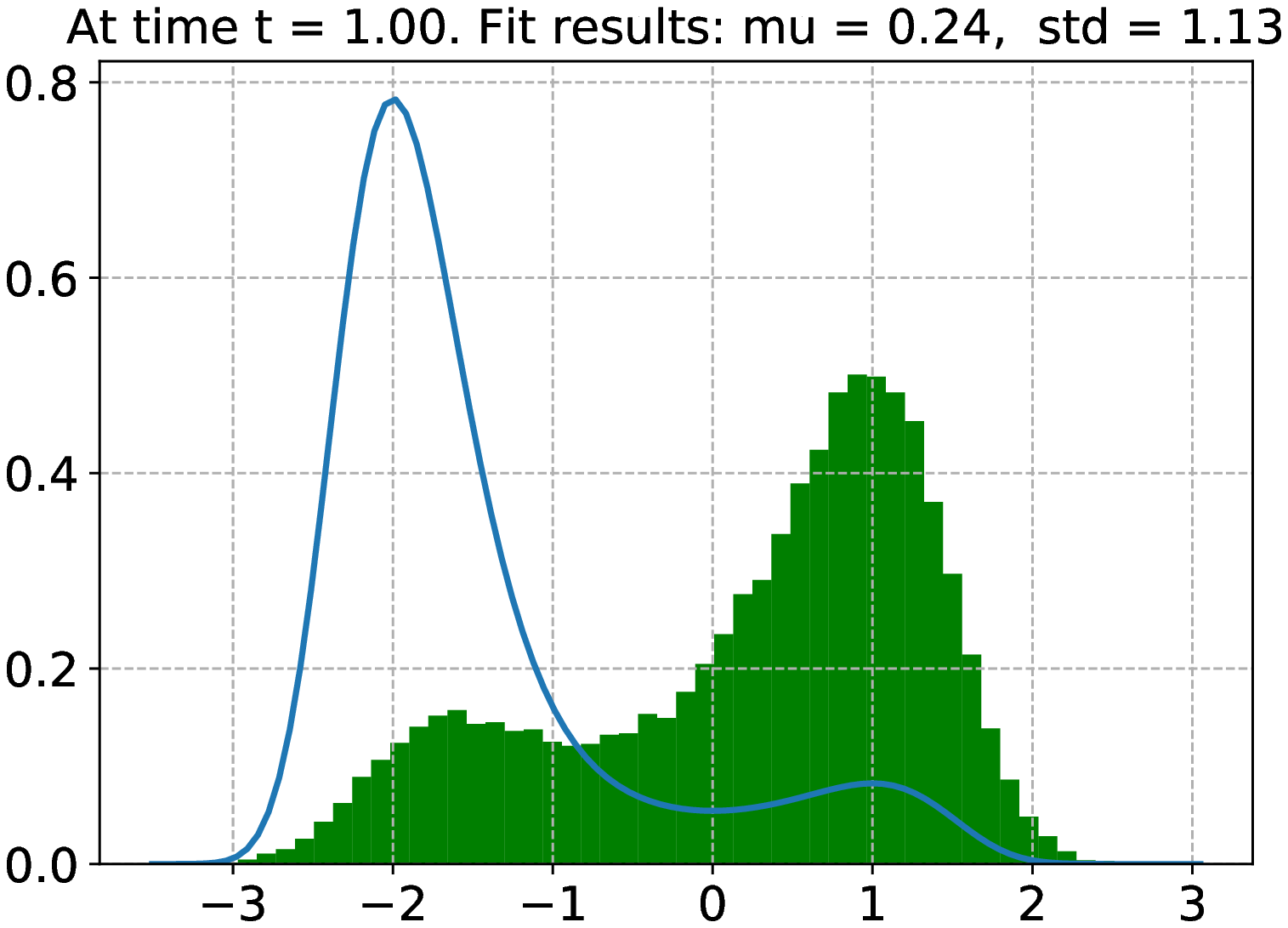} } \\
\subfloat[$t = 2$]{\includegraphics[width=0.5\textwidth]{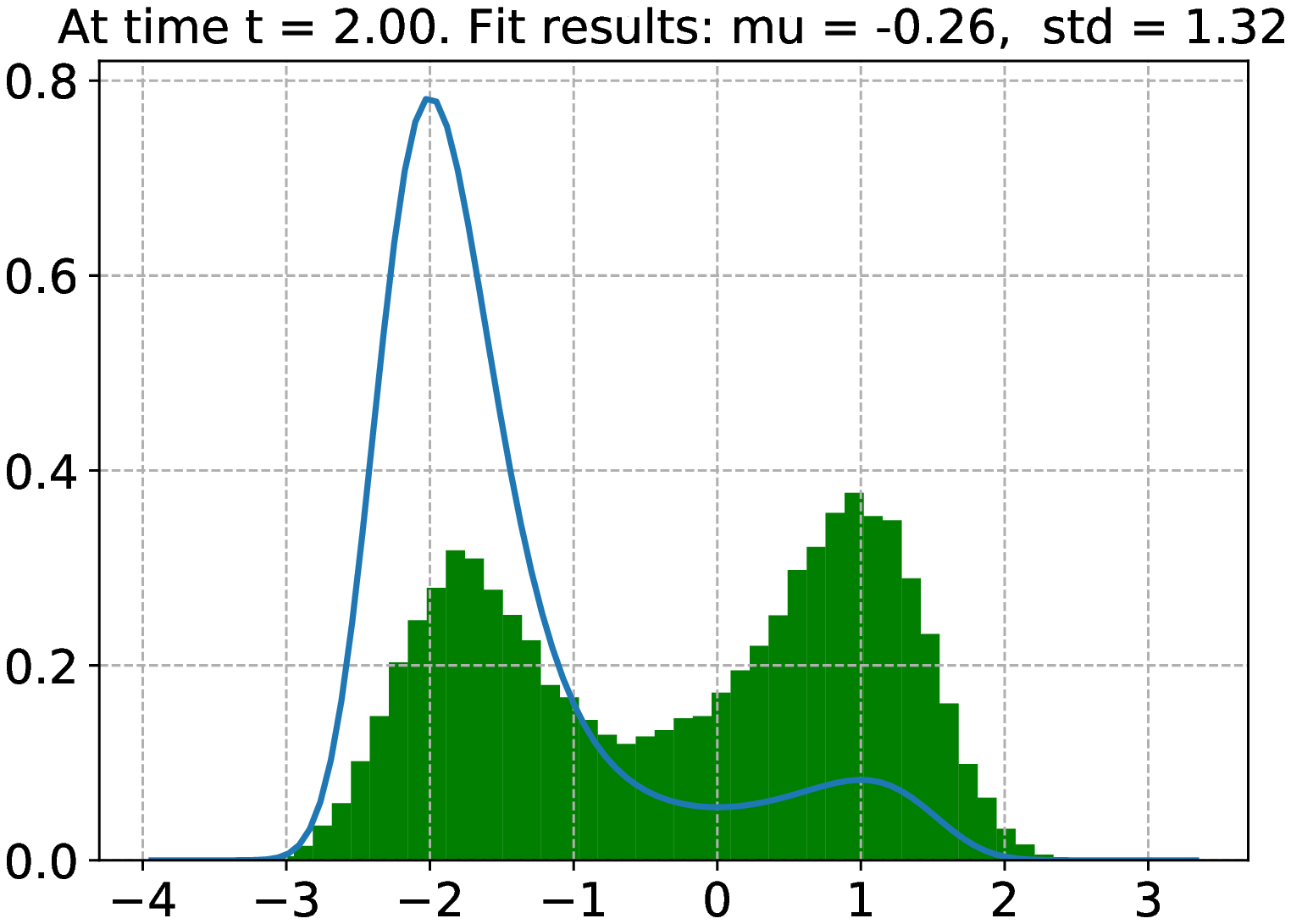} }
\subfloat[$t = 4$]{\includegraphics[width=0.5\textwidth]{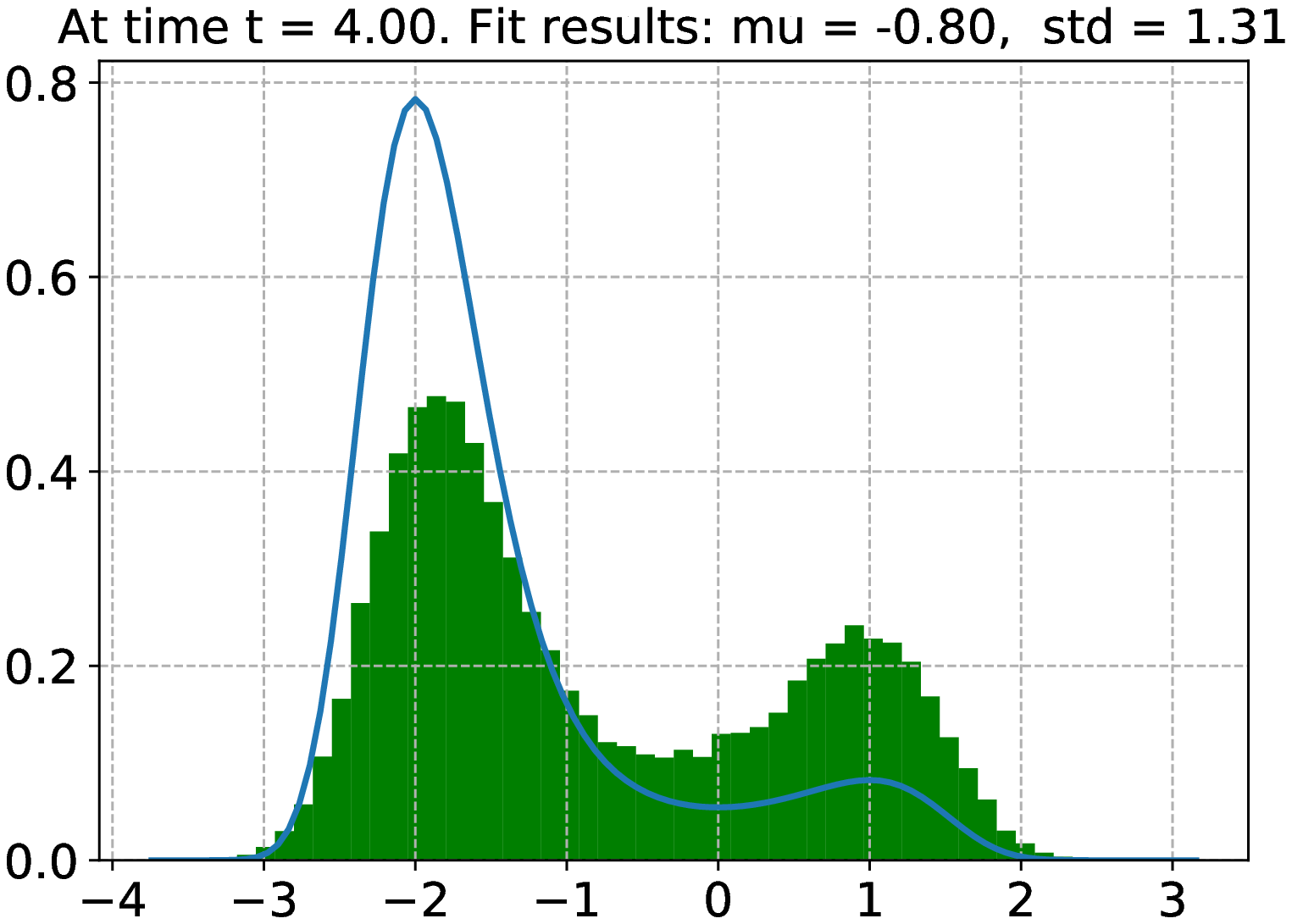} }
\caption{Example 2 (Section \ref{sec_ex2_asym}): the empirical distribution of $x$ at different times $t = 0, 0.0625, 0.25, 1, 2, 4, 8, 16, 32, 512$. The solid line is the reference Gibbs measure $\sim \exp(-V(x))$. It can be seen that provided the intial data all gathering at $x=1$, the distribution of $x$ expands, and moves towards the left, and finally creates a double-well shape.}
\label{ex2_asymfig1}
\end{figure}
\begin{figure}[htbp]
\ContinuedFloat
\centering
\subfloat[$t = 8$]{\includegraphics[width=0.5\textwidth]{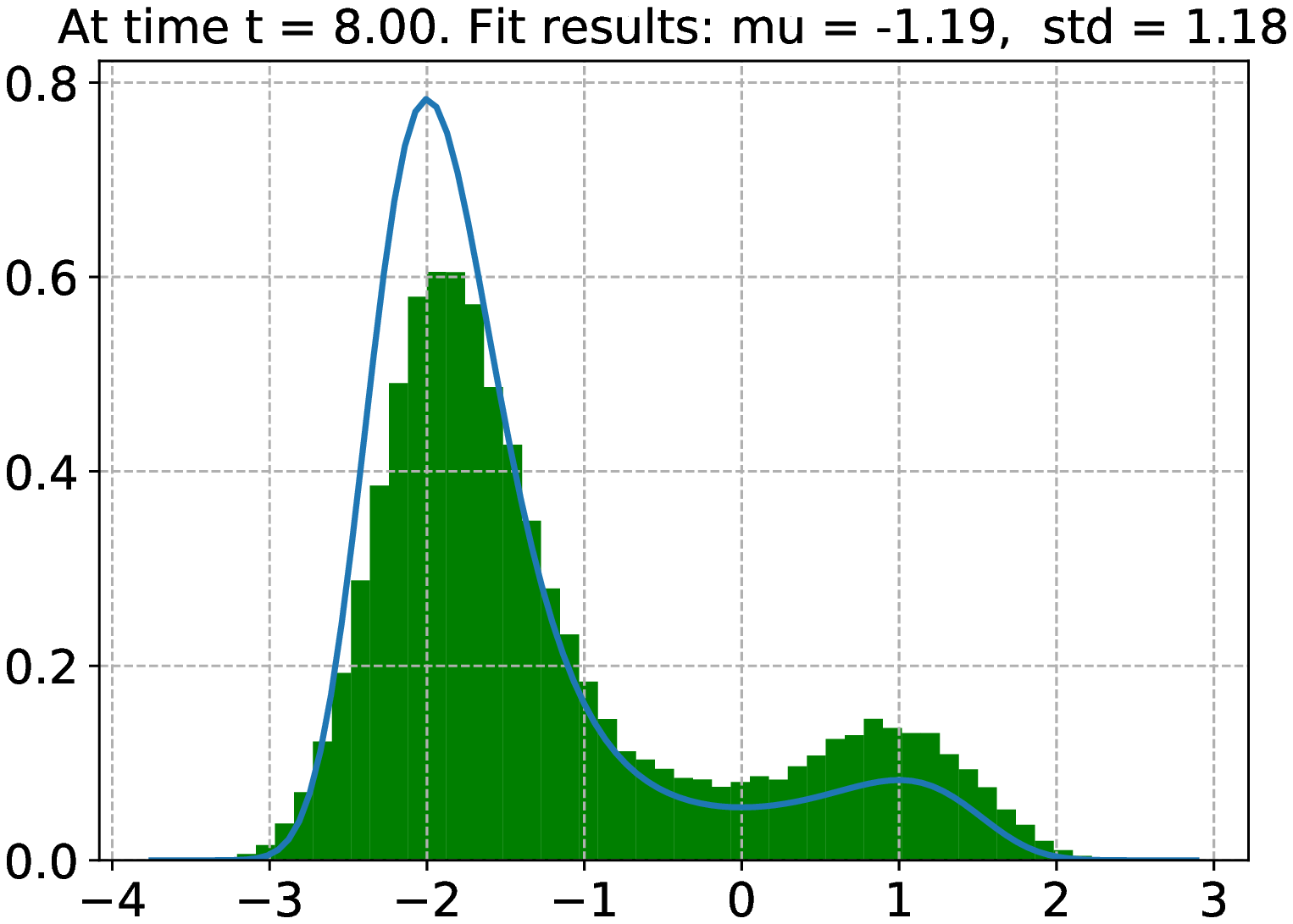} }
\subfloat[$t = 16$]{\includegraphics[width=0.5\textwidth]{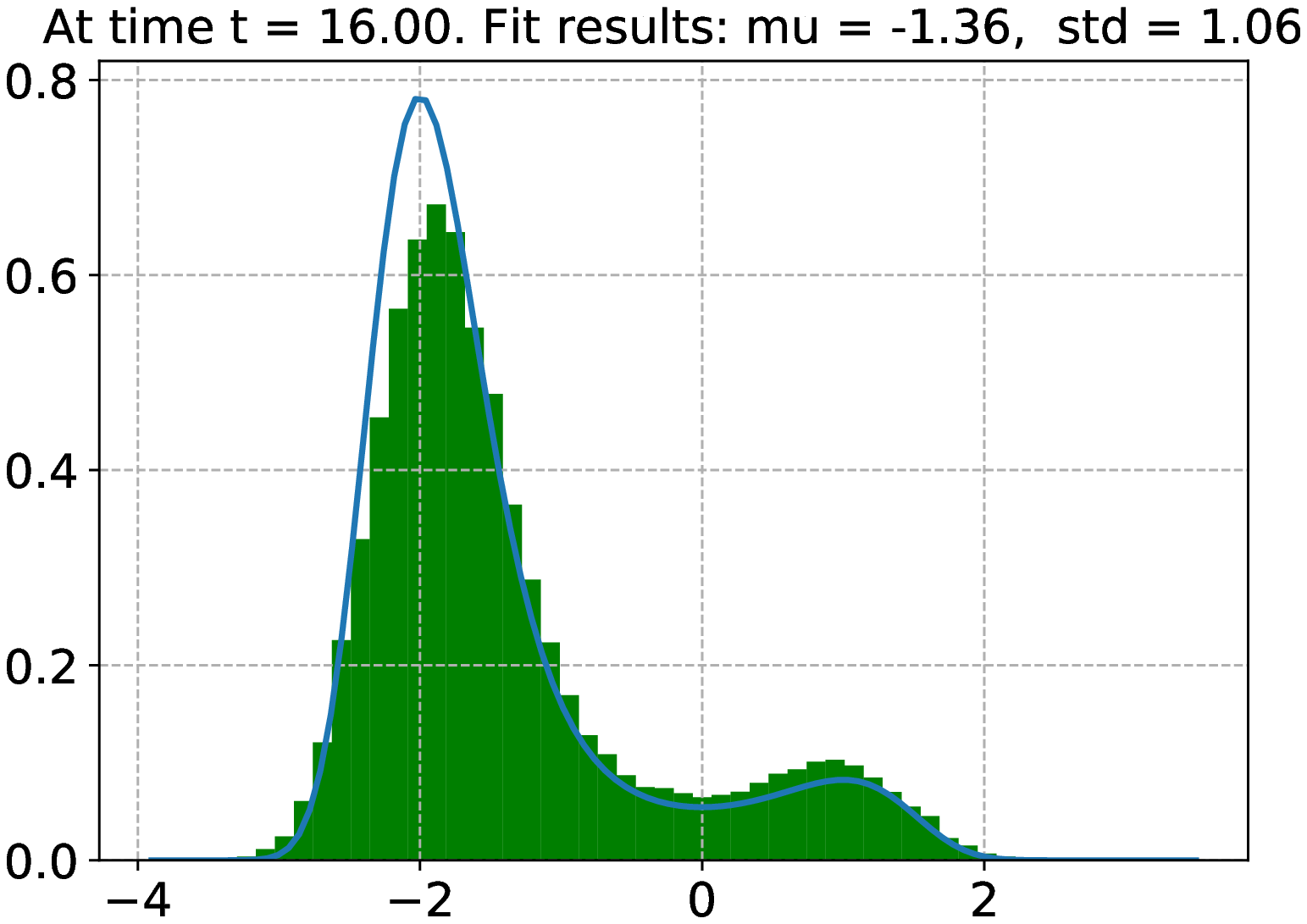} } \\
\subfloat[$t = 32 $]{\includegraphics[width=0.5\textwidth]{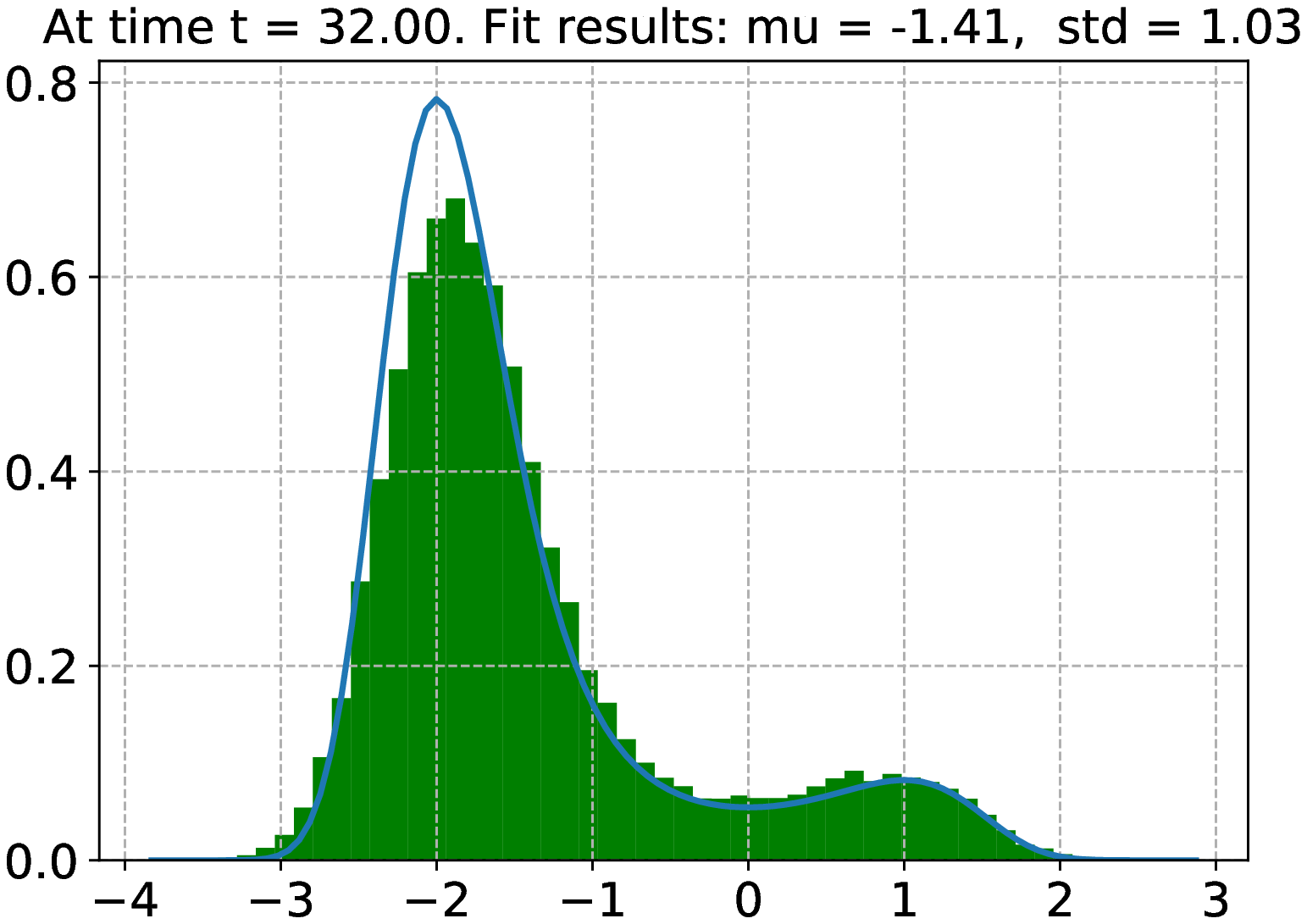} }
\subfloat[$t = 512 $]{\includegraphics[width=0.5\textwidth]{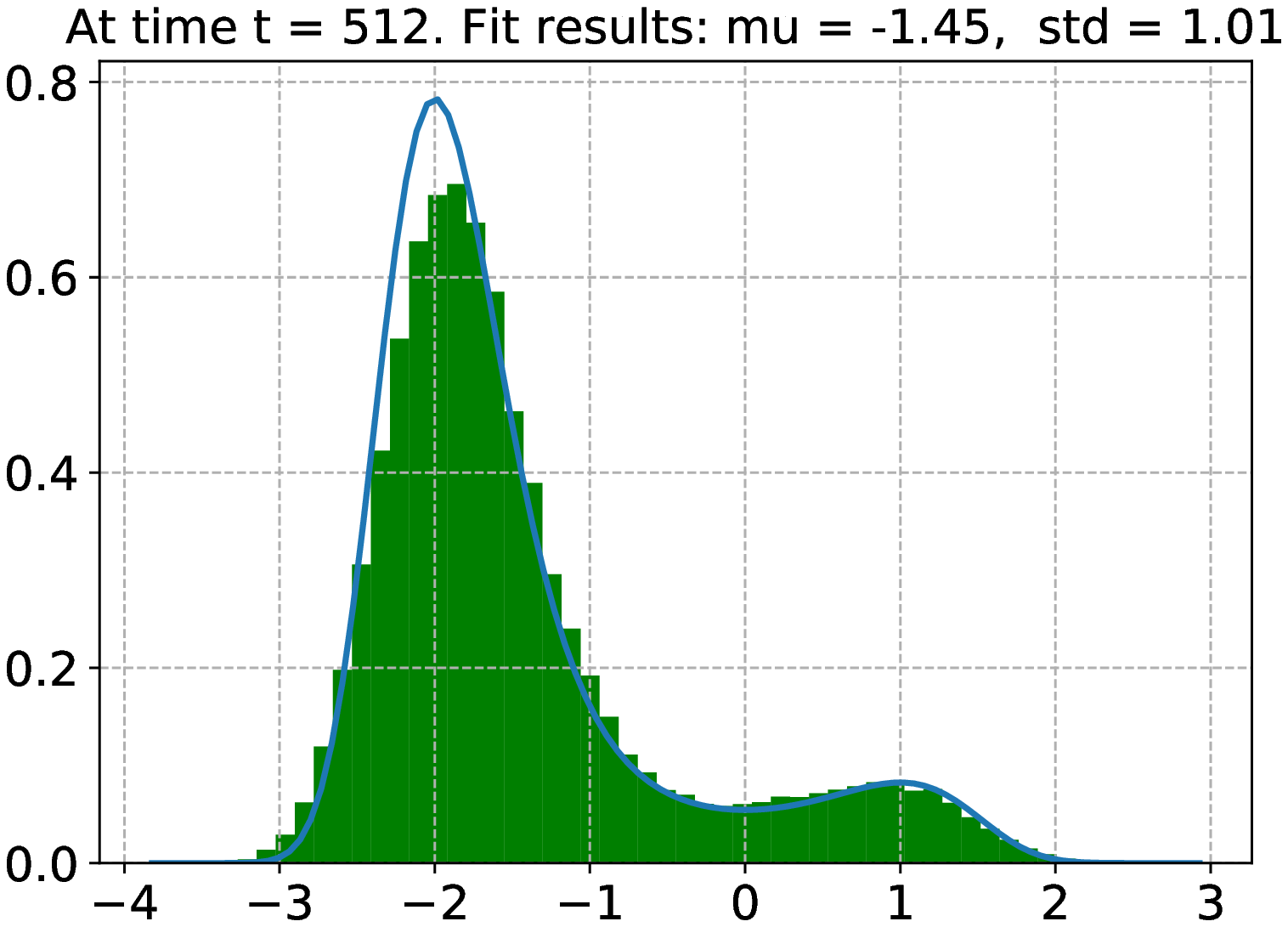} }
\captionsetup{list=off,format=cont}
\caption{Example 2 (Section \ref{sec_ex2_asym}): the empirical distribution of $x$ at various moments $t = 0, 0.0625, 0.25, 1, 2, 4, 8, 16, 32, 512$. }
\label{ex2_asymfig2}
\end{figure}

\subsection{Example 3 (2D double well potential)} \label{sec_ex3_2d}

For the 2D case, we should use the driven process as $B_H:=(B_H^1, B_H^2)$ where $B_H^i$ are two independent fractional Brownian motions. To see this, using the FDT, we find that the kernel is simply $\frac{1}{\Gamma(1-\alpha)}\tau^{-\alpha} I_d$ where $I_d$ is the $2\times 2$ identity matrix.
Hence, the equation \eqref{eq:fsde1} still holds.
The fractional Brownian motion $B_H$ here is different from the 2D fractional Brownian random field (see for example \cite{2dfbm}).

It is of interests to study double well potential in literature due to its application to describe the chemical phenomena, such as vibrionic spectra \cite{doublewell1}, proton transfer \cite{doublewell2} and etc. Here in the numerical test, we consider the double-well potential
$$V(x,y) = \frac{1}{4}(x^2+y^2)^2- x^2 - x^2y,$$
the quadratic coupling cases as is considered in, say, \cite{db_linearquadratic1,db_linearquadratic2}, which can be visualized in Fig. \ref{ex3_potentialfig} . In this example, consider intial datum $(x,y) = (0, 0.2)$ and compute till $T = 512$ with $\Delta t = 2^{-5}$ via the fast solver. We plot the empirical distribution at different time  $ t = 0, 0.25, 1, 2, 32, 512$ in Fig. \ref{ex3_fig1} and \ref{ex3_fig2} (cont.), where both 3D histogram and its 2D contour are plotted for the convenience of visualization. In Fig. \ref{ex3_msdfig}, the mean square displacement of $x(t)$, namely,
\begin{equation}
MSD(t) : = \mathbb{E}|x(t)-x(0)|^2,
\label{def_msd}
\end{equation}
is plotted in terms of time. The numerics indicate that the mean square displacement approaches to an equilibrium in an algebraic rate, instead of exponential. 

\begin{figure}[htbp]
\centering
\includegraphics[width=0.5\textwidth]{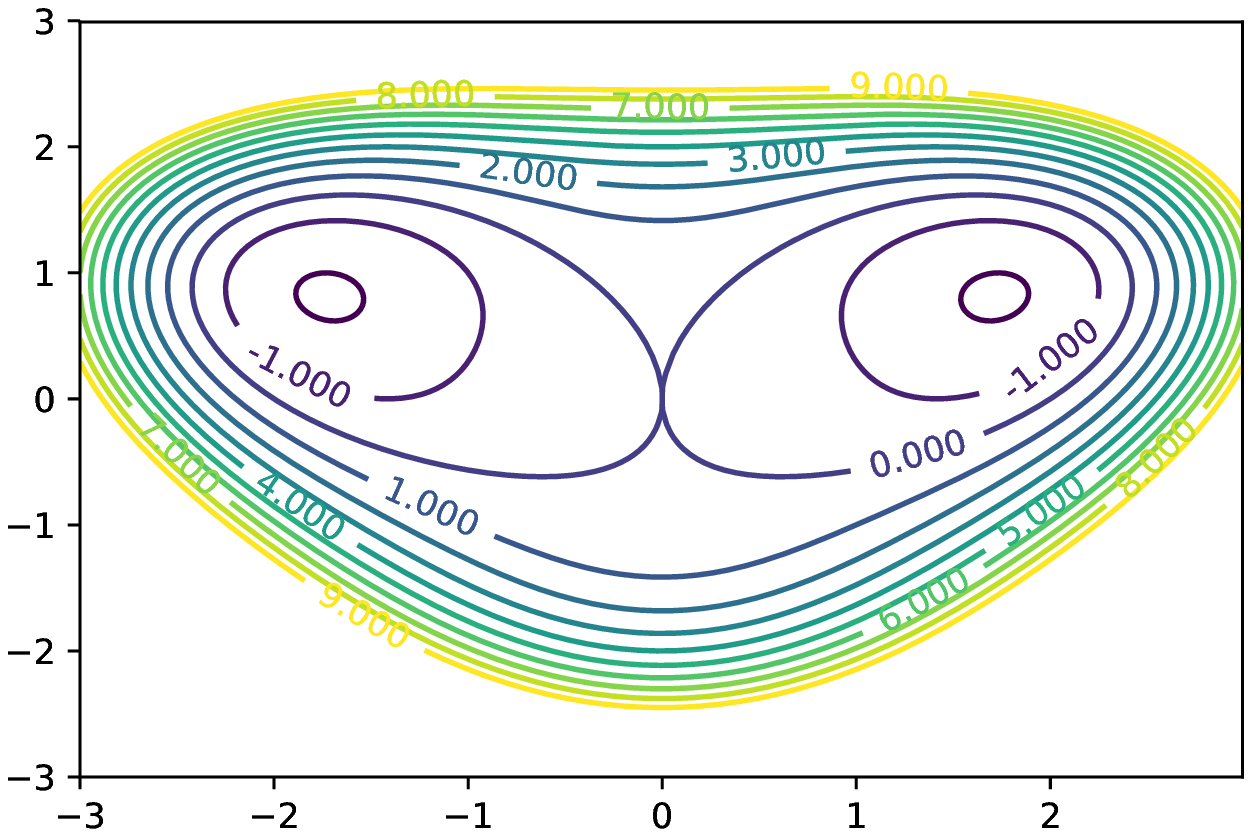}
\caption{Example 3 (Section \ref{sec_ex3_2d}): The mean square displacement of $x(t)$ versus time. }
\label{ex3_potentialfig}
\end{figure}

\begin{figure}[htbp]
\centering
\subfloat[$t = 0$]{\includegraphics[width=0.5\textwidth]{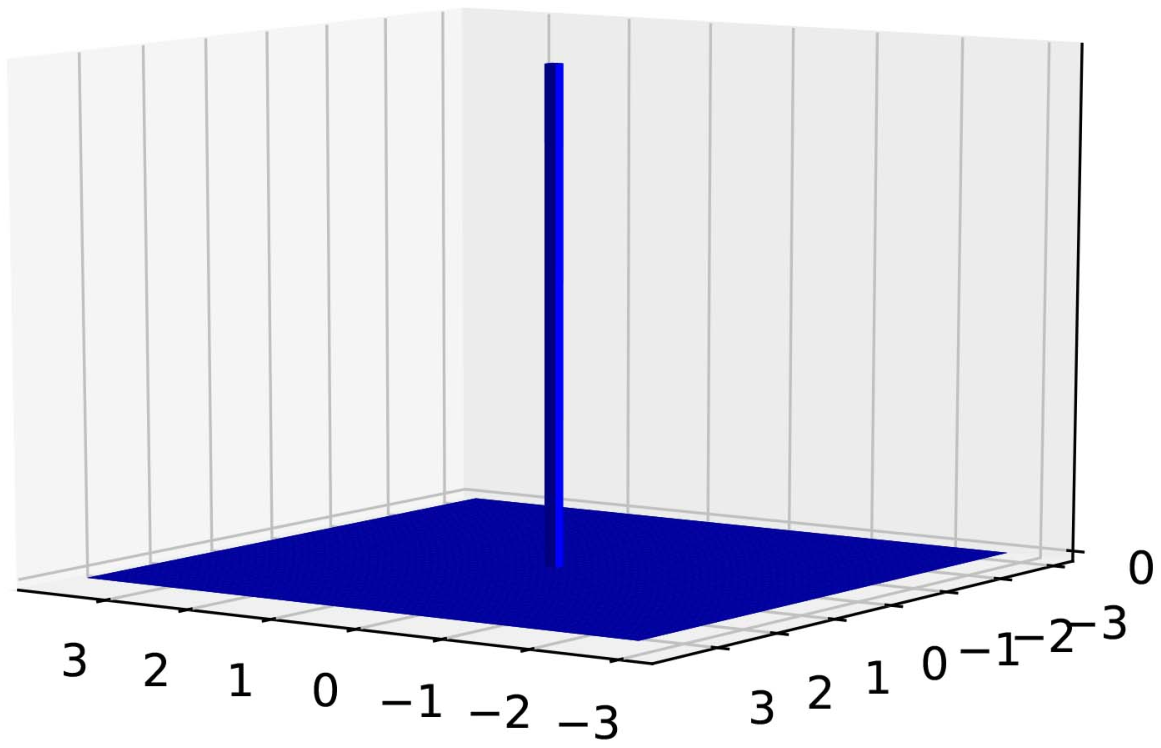} }
\subfloat[$t = 0$]{\includegraphics[width=0.5\textwidth]{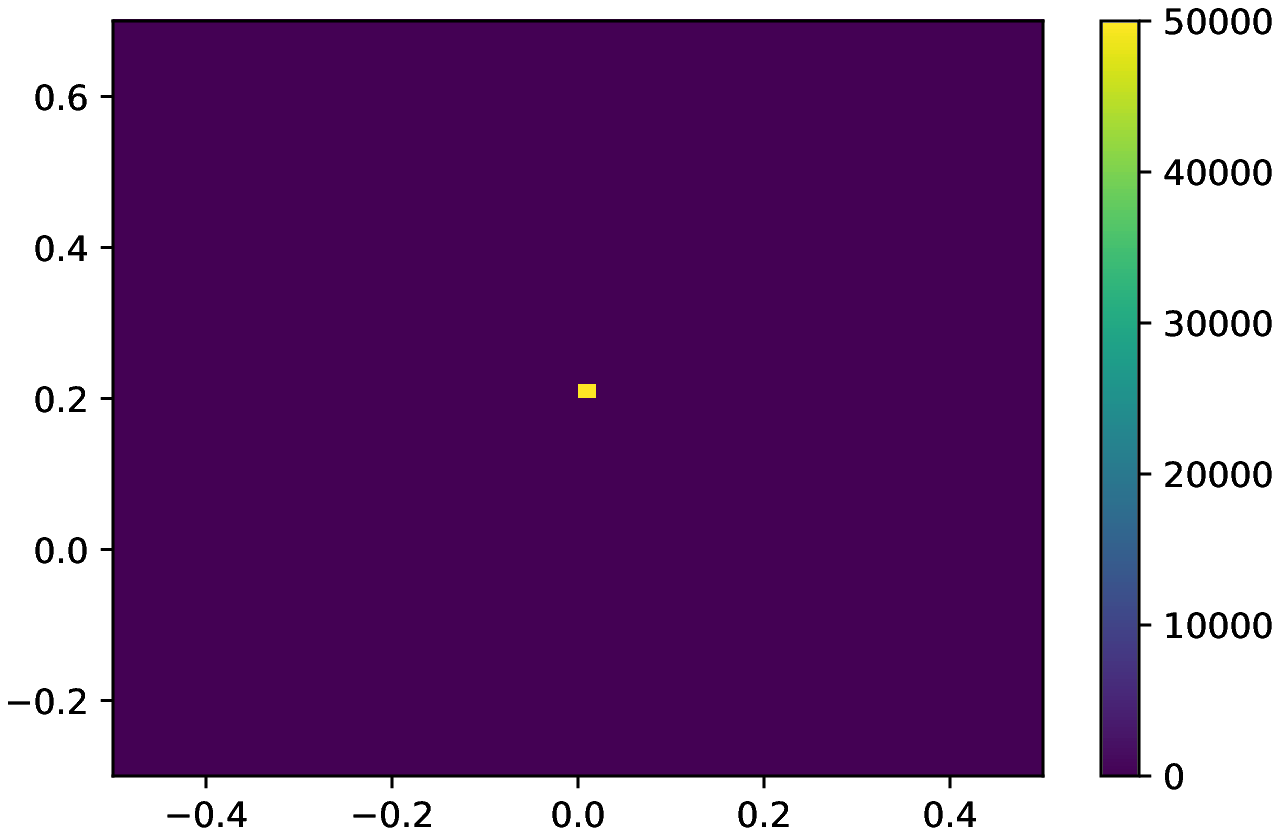} }\\
\subfloat[$t = 0.25$]{\includegraphics[width=0.5\textwidth]{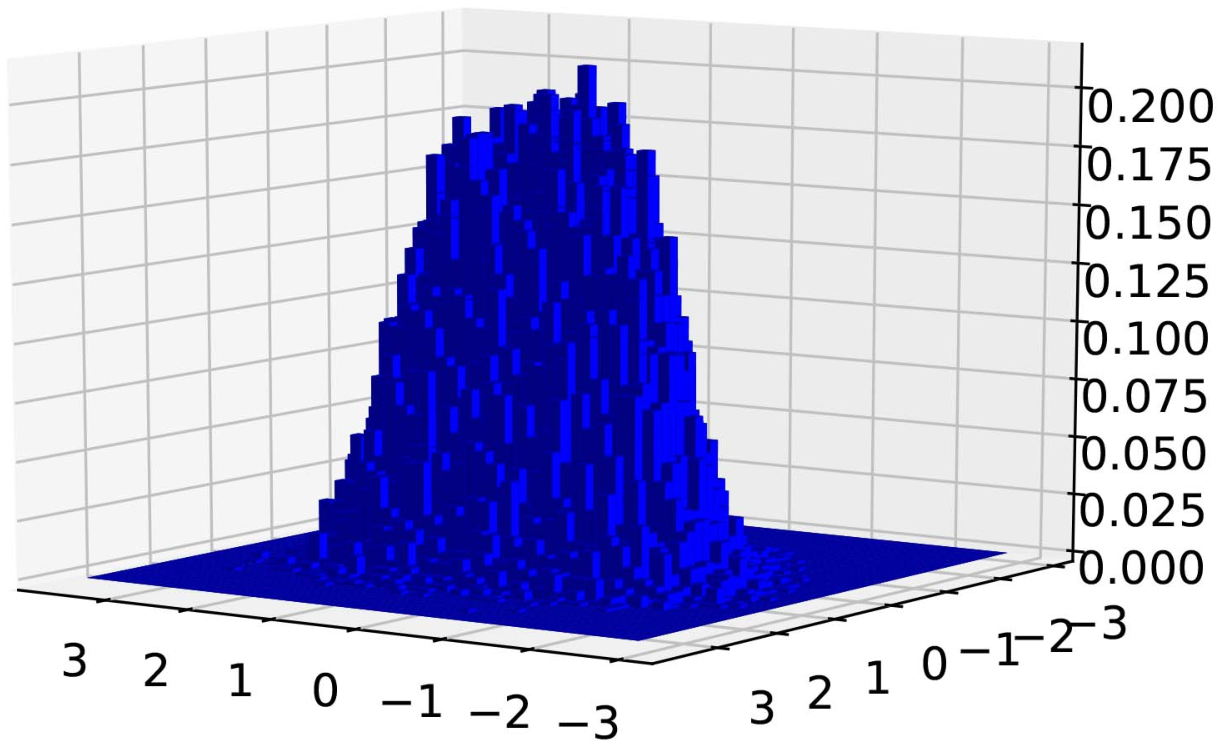} }
\subfloat[$t = 0.25$]{\includegraphics[width=0.5\textwidth]{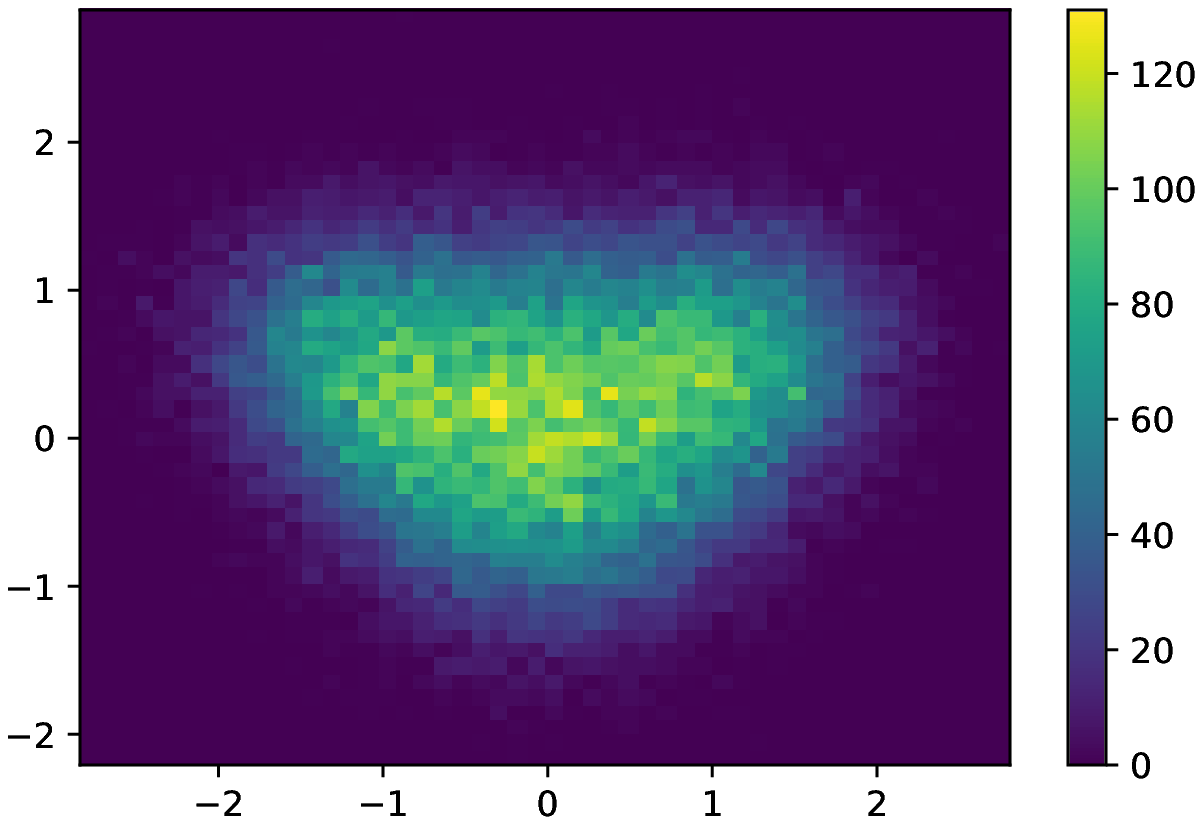} }\\
\subfloat[$t = 1$]{\includegraphics[width=0.5\textwidth]{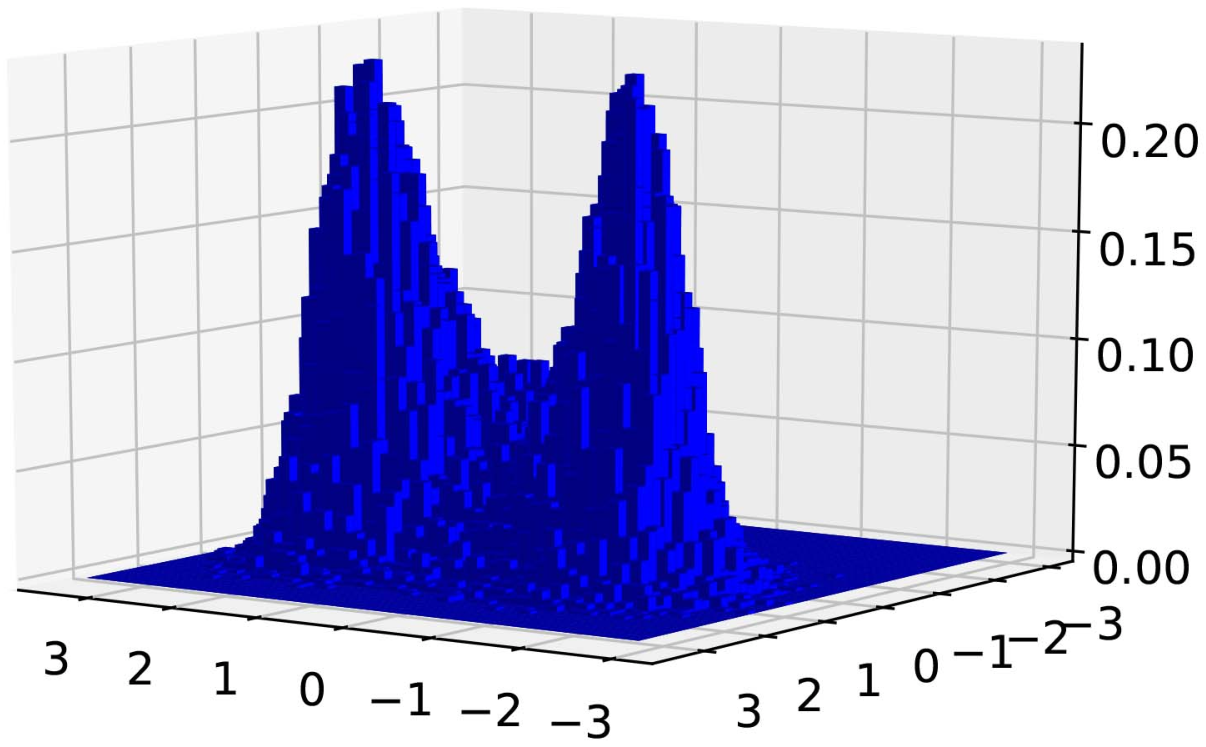} }
\subfloat[$t = 1$]{\includegraphics[width=0.5\textwidth]{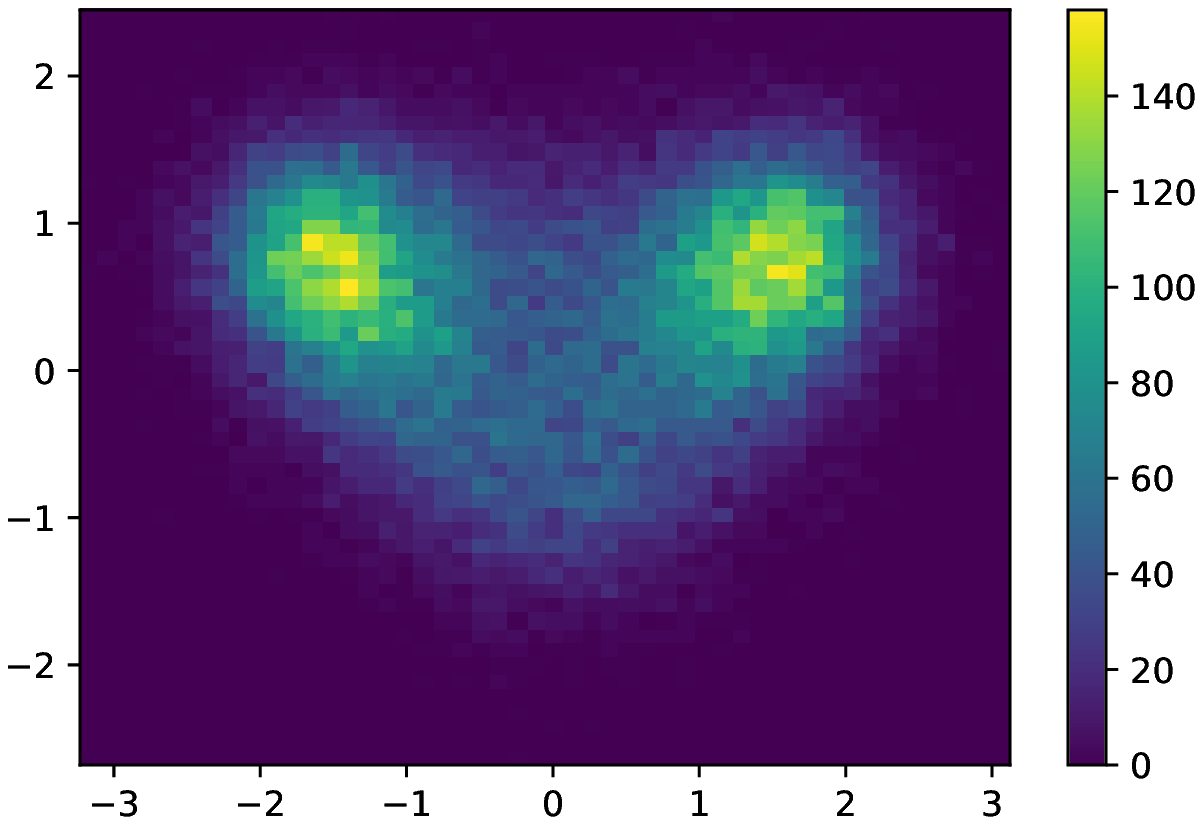} }\\
\caption{Example 3 (Section \ref{sec_ex3_2d}): the empirical distribution of $x$ in 3D histogram and 2D contour, respectively, in at various moments $t = 0, 0.25, 1, 2, 32, 512$. The solid line is the reference Gibbs measure $\sim \exp(-V(x))$. It can be seen that provided the intial data all gathering at $x=(0, 0.2)$, the distribution of $x$ expands, and moves towards a double-well shape.}
\label{ex3_fig1}
\end{figure}

\begin{figure}[htbp]
\ContinuedFloat

\centering
\subfloat[$t = 2$]{\includegraphics[width=0.5\textwidth]{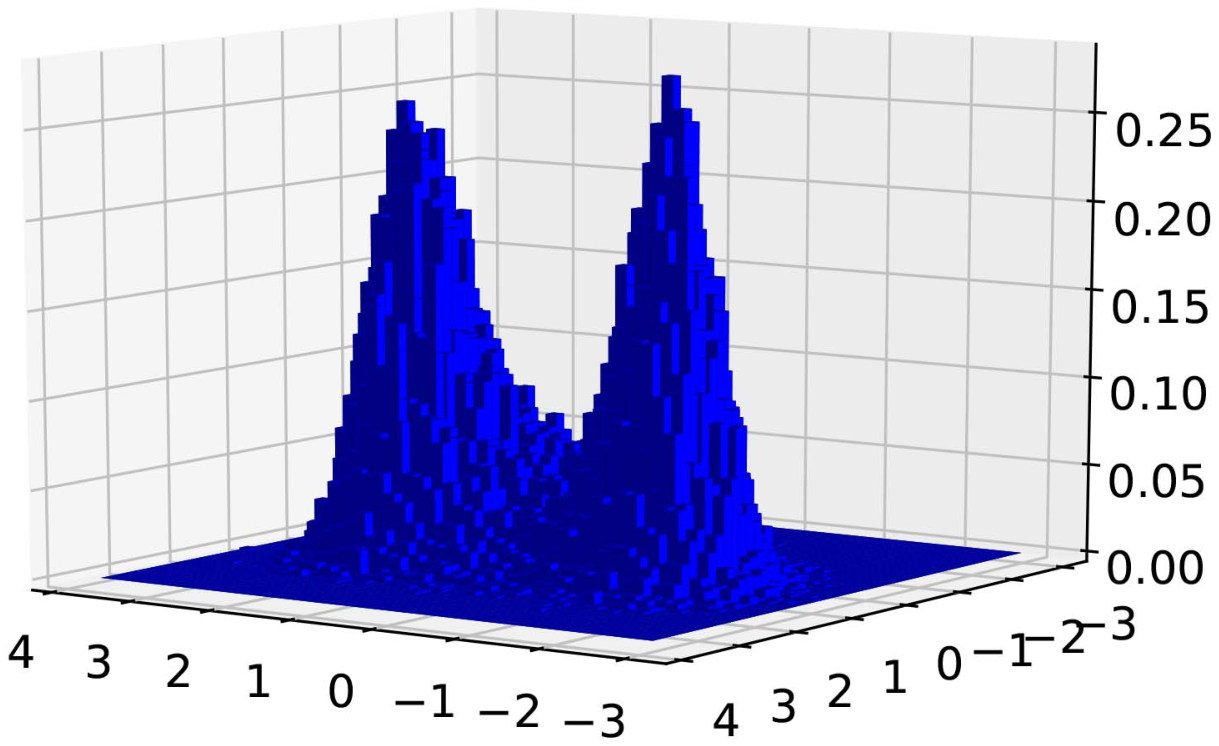} }
\subfloat[$t = 2$]{\includegraphics[width=0.5\textwidth]{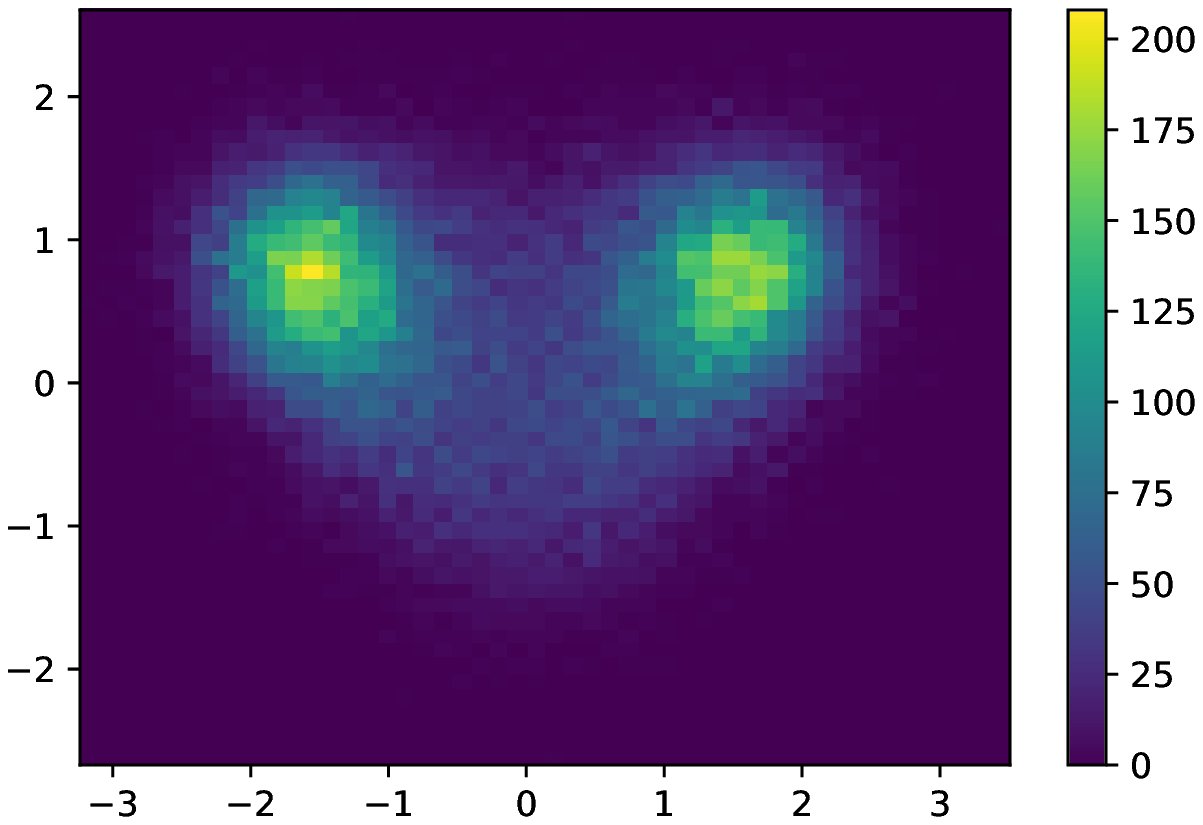} }\\
\subfloat[$t = 32$]{\includegraphics[width=0.5\textwidth]{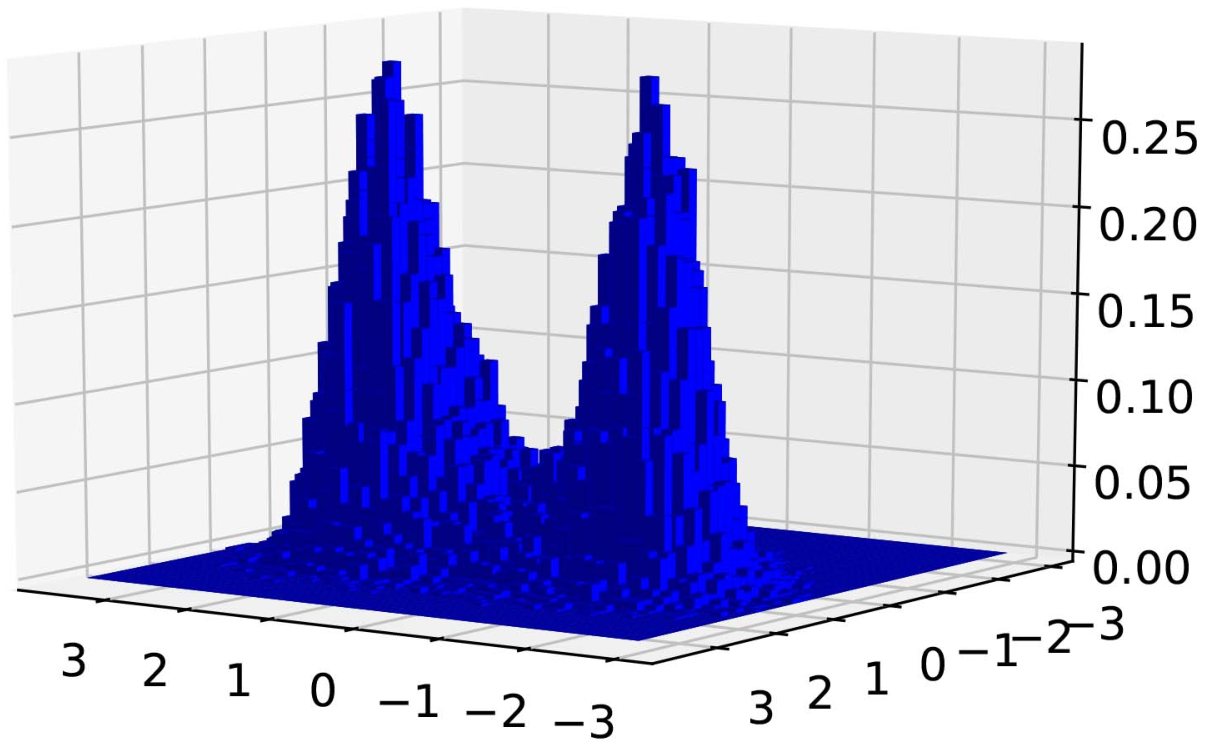} }
\subfloat[$t = 32$]{\includegraphics[width=0.5\textwidth]{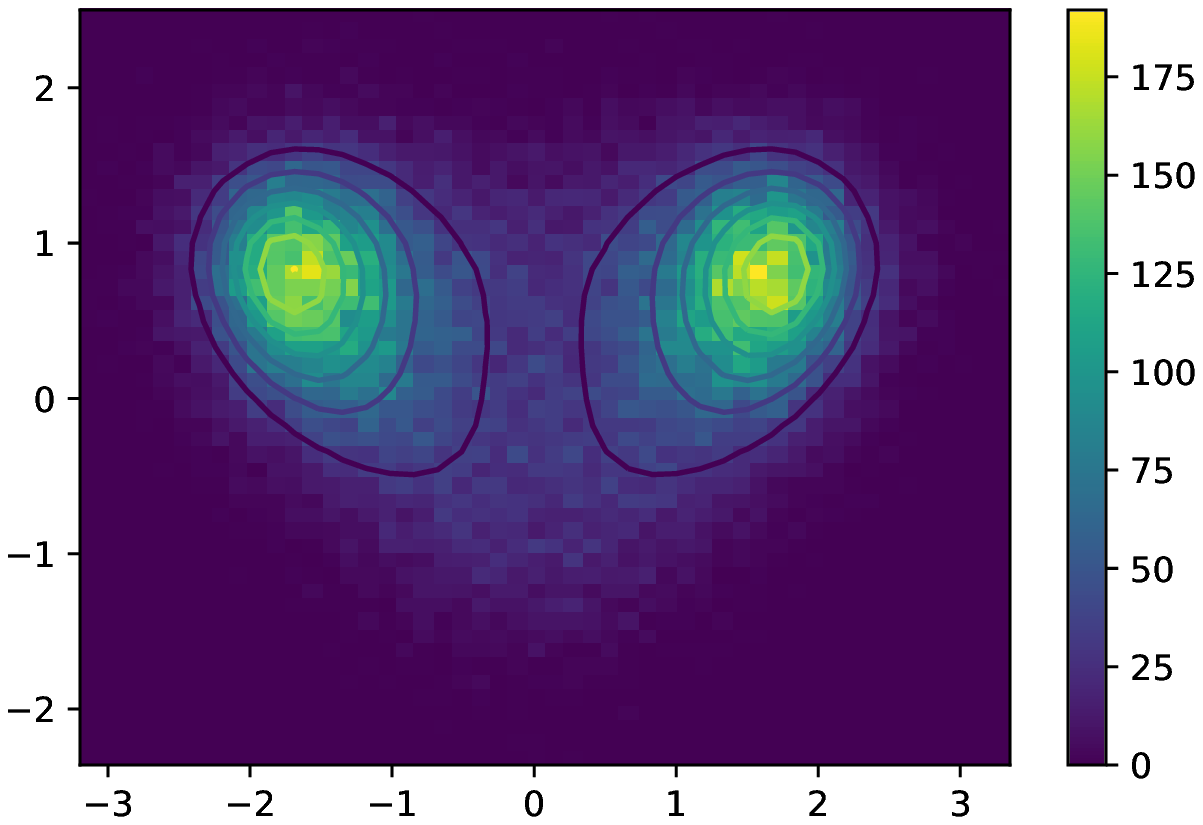} }\\
\subfloat[$t = 512$]{\includegraphics[width=0.5\textwidth]{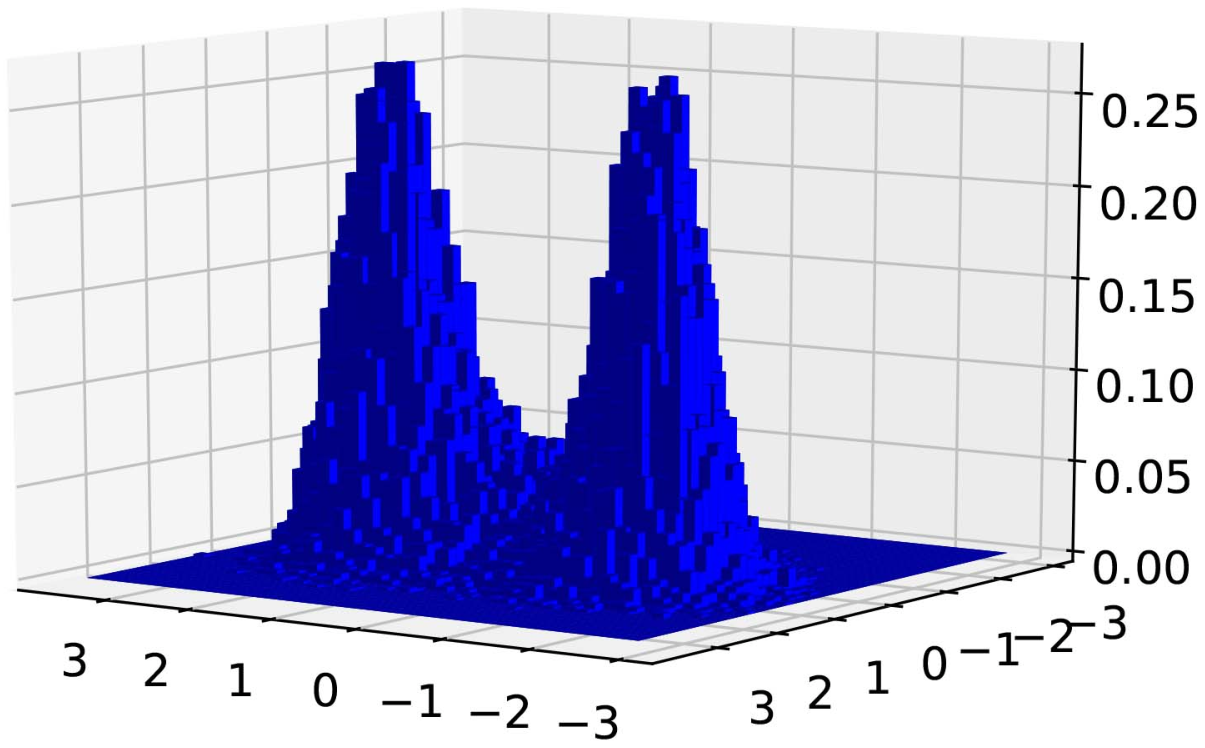} }
\subfloat[reference Gibbs measure]{\includegraphics[width=0.5\textwidth]{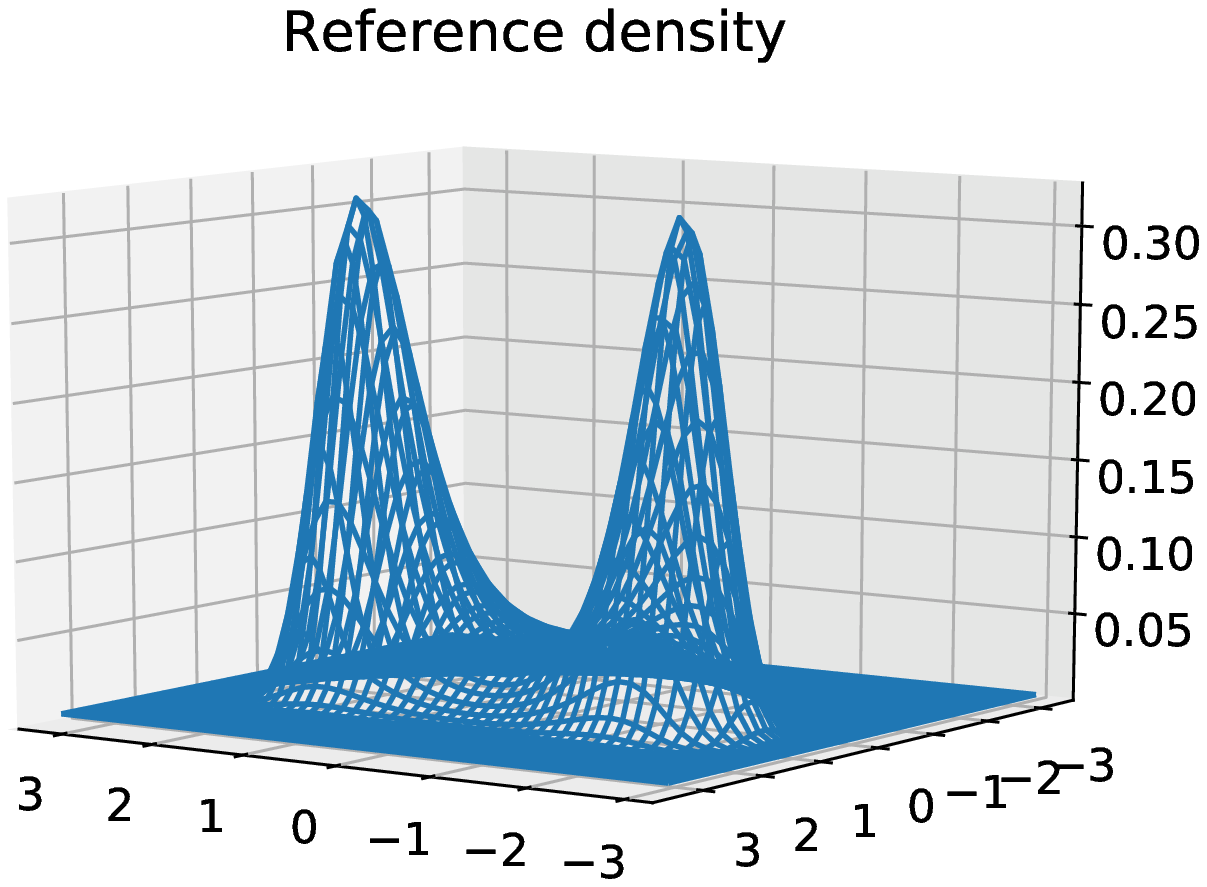} }
 \captionsetup{list=off,format=cont}
\caption{Example 3 (Section \ref{sec_ex3_2d}): the empirical distribution of $x$ in 3D histogram and 2D contour, respectively, in at various moments $t = 0, 0.25, 1, 2, 32, 512$. }
\label{ex3_fig2}
\end{figure}

\begin{figure}[htbp]
\centering
\subfloat[Mean square displacement]{\includegraphics[width=0.4\textwidth]{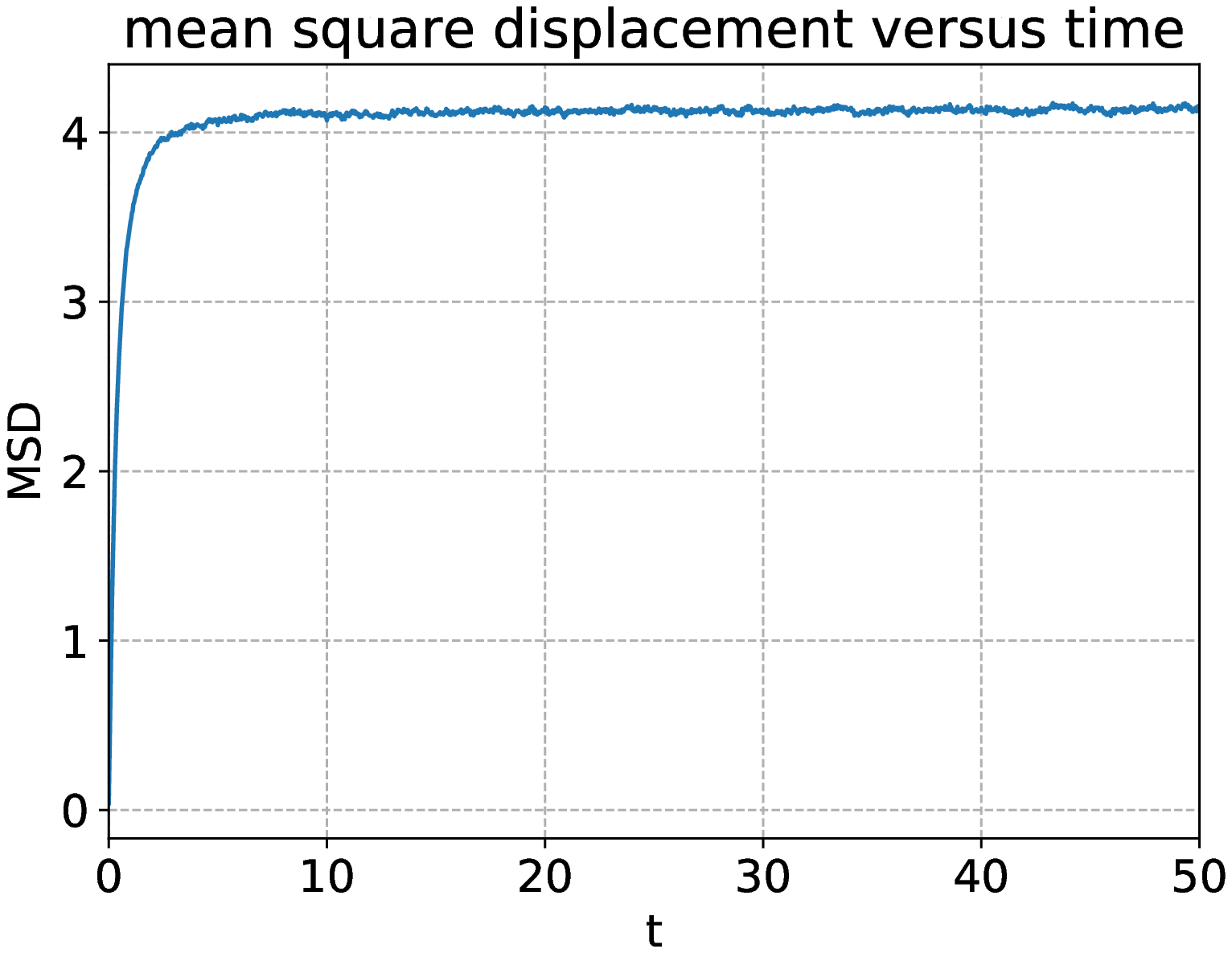}}
\subfloat[$MSD(\infty) - MSD(t)$]{\includegraphics[width=0.4\textwidth]{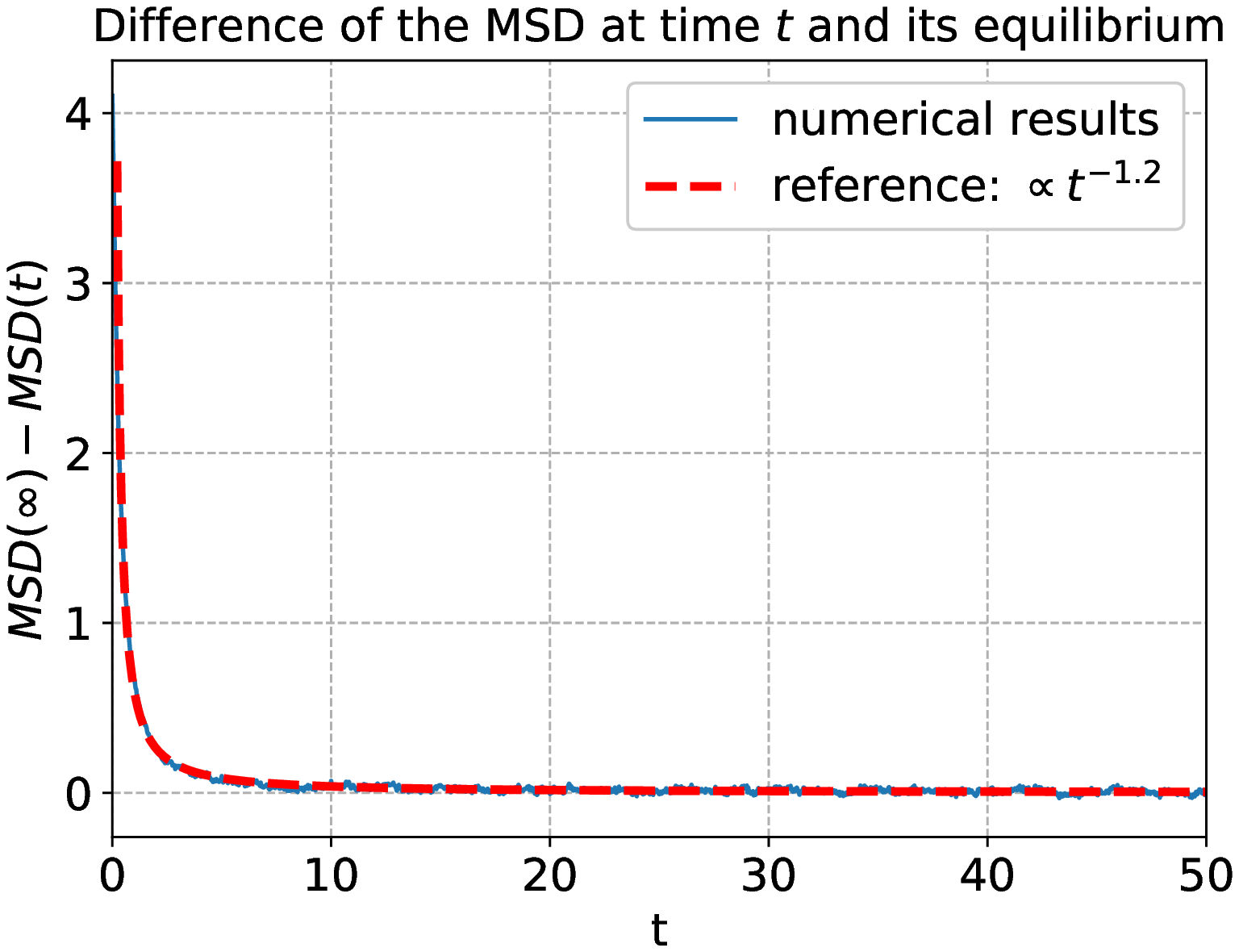}}
\caption{Example 3 (Section \ref{sec_ex3_2d}): The mean square displacement is defined as \eqref{def_msd}. It can be seen that the mean square displacement approaches an equilibrium algebraically, instead of exponentially.}
\label{ex3_msdfig}
\end{figure}

\appendix

\section{Proof of Lemma \ref{lmm:wellposedmodifiedeq}}\label{app:modifiedeq}

\begin{proof}[Proof of Lemma \ref{lmm:wellposedmodifiedeq}]
Recall the SOE approximation kernel $\gamma(t)$ in \eqref{eq:soekernel}, which is positive everywhere. Recall \eqref{eq:modifiedInt}:
\begin{gather}\label{eq:y}
y(t)=y_0+\int_0^t\gamma(t-s)f(y(s))\,ds.
\end{gather}
Assume that there are two continuous solutions $y_1(t)$ and $y_2(t)$ on some interval $I=[0, T]\cap [0, T_b)$. Let $T_1=\min(T, T_b)$.
We clearly have
\[
|y_1(t)-y_2(t)|\le \int_0^t\gamma(t-s)|f(y_1(s))-f(y_2(s))|\,ds.
\]
Assume that $y_1(t)=y_2(t)$ for all $t<t^*$ for some $t^*\in [0, T_1)$. Then, we can pick $\delta$ small enough and then $|y_i(t)|\le M$ for some $M>0$ and all $t\in [0, t^*+\delta]$. Let $L$ be the Lipschitz constant for $f$ on the interval $[0, M]$. Then,  for any $t\in [t^*, t^*+\delta]$,
\begin{gather*}
|y_1(t)-y_2(t)|\le \int_{t^*}^t\gamma(t-s)|f(y_1(s))-f(y_2(s))|\,ds
\le L\left(\sup_{s\in (t^*, t^*+\delta)}|y_1(s)-y_2(s)|\right)\sup_{t\in [t^*, t^*+\delta]}\int_{t^*}^{t}\gamma(t-s)\,ds
\end{gather*}
If $\delta$ is sufficiently small, we have
\[
\nu=L\sup_{t\in [t^*, t^*+\delta]}\int_{t^*}^{t}\gamma(t-s)\,ds<1,
\]
 then we have
\[
(1-\nu)\sup_{s\in [t^*, t^*+\delta]}|y_1(s)-y_2(s)|\le 0.
\]
This means the set of all such $t^*$ is open in $I$ with the inherited topology from $\mathbb{R}$. This set is clearly also closed under the inherited topology by the continuity of $y_i$. Hence, the set of all such $t^*$ is $I$, which means $y_1(t)=y_2(t)$ on $I$.

Now, we establish the existence result and the desired properties. Consider the following standard Picard sequence,
\[
y_{n+1}(t)=y_0+\int_0^t \gamma(t-s)f(y_n(s))\,ds,~~y_0(t)=y_0.
\]
By induction, it is not hard to see
\[
y_{n+1}(t)\ge y_n(t),~t\in [0,\infty),
\]
and for each $n$ $y_n(t)\ge y_0$ and is non-decreasing.

Clearly, $y_0\le v(t)$ for $t\in [0, T]\cap [0, T_b)$.
Assume this is true for $n$. Consider $y_{n+1}$.
For $t\in [0, k]\cap [0, T_b)$, it is easy to see
\begin{align*}
y_{n+1}(t)
&\le y_0+\frac{1}{\Gamma(\alpha)}\int_0^t(t-s)^{\alpha-1}f(y_n(s))\,ds \\
& \le y_0+\frac{1}{\Gamma(\alpha)}\int_0^t(t-s)^{\alpha-1}f(v(s))\,ds
+\frac{\epsilon T^{1-\alpha}}{\Gamma(\alpha)}\int_0^{t}(t-s)^{\alpha-1}f(v(s))\,ds \\
&=v(t)
\end{align*}
For $t\in (k, T]\cap [0, T_b)$, we have
\begin{multline*}
y_{n+1}(t)\le y_0+\frac{1}{\Gamma(\alpha)}\int_0^t(t-s)^{\alpha-1}f(y_n(s))\,ds
+\frac{\epsilon}{\Gamma(\alpha)}\int_0^{t-k}f(y_n(s))\,ds\\
\le
y_0+\frac{1}{\Gamma(\alpha)}\int_0^t(t-s)^{\alpha-1}f(y_n(s))\,ds
+\frac{\epsilon T^{1-\alpha}}{\Gamma(\alpha)}\int_0^{t}(t-s)^{\alpha-1}f(y_n(s))\,ds\\
\le y_0+\frac{1}{\Gamma(\alpha)}\int_0^t(t-s)^{\alpha-1}f(v(s))\,ds
+\frac{\epsilon T^{1-\alpha}}{\Gamma(\alpha)}\int_0^{t}(t-s)^{\alpha-1}f(v(s))\,ds=v(t).
\end{multline*}

Hence, on $[0, T]\cap [0, T_b)$, we have
\[
y_{n-1}(t)\le y_n(t)\le \ldots \le v(t).
\]
This means $y_n(t)$ increases to a non-decreasing function  $y(t)$ pointwise on $[0, T]\cap [0, T_b)$. By the monotone convergence theorem, we have
\[
y(t)=y_0+\int_0^t\gamma(t-s)f(y(s))\,ds,~\forall t\in [0, T]\cap [0, T_b).
\]
Since $y(t)\le v(t)$, we conclude that $y(t)$ must be continuous. This means that $y(\cdot)$ is a continuous solution with the desired properties.

If $f$ is Lipschitz, it is well known that $v(\cdot)$ exists globally, or  $T_b=\infty$ (see \cite{feng2017} for example) and the claim follows.
\end{proof}

\bibliographystyle{unsrt}
\bibliography{probsde}

\end{document}